\newtheorem{thm}{Theorem}[section]
\newtheorem{cor}[thm]{Corollary}
\newtheorem{lem}[thm]{Lemma}
\newtheorem{prop}[thm]{Proposition}
\newtheorem{assump}[thm]{Assumption}
\theoremstyle{definition}
\newtheorem{defi}[thm]{Definition}
\newtheorem{rem}[thm]{Remark}
\numberwithin{equation}{section}
\newcommand{\la}{\langle}
\newcommand{\ra}{\rangle}
\newcommand{\dom}{\mathrm{dom}}
\newcommand{\cl}{\mathcal}
\newcommand{\cd}{\cdot}
\newcommand{\R}{\mathbb{R}}
\newcommand{\N}{\mathbb{N}}
\newcommand{\C}{\mathbb{C}}
\newcommand{\slim}{\underset{n\rightarrow\infty}{\rm{s- lim}}~}
\DeclareMathOperator*{\esssup}{ess\,sup}
\newcommand\dN{{\mathbb{N}}}
\newcommand{\ga}{{\alpha}}
\newcommand{\gb}{{\beta}}
\newcommand{\gD}{{\Delta}}
\newcommand{\gga}{{\gamma}}
\newcommand{\gL}{{\Lambda}}
\newcommand{\go}{{\omega}}
\newcommand{\gs}{{\sigma}}
\newcommand\gt{{\tau}}
\newcommand\cA{{\mathcal{A}}}
\newcommand\cB{{\mathcal{B}}}
\newcommand\cC{{\mathcal{C}}}
\newcommand\cG{{\mathcal{G}}}
\newcommand\cI{{\mathcal{I}}}
\newcommand\cK{{\mathcal{K}}}
\newcommand\cU{{\mathcal{U}}}
\title{Convergence rate estimates for Trotter product approximations of solution operators for non-autonomous Cauchy problems }
\author{Hagen \textsc{Neidhardt}\footnote{H. Neidhardt: WIAS Berlin, Mohrenstr. 39, D-10117 Berlin, Germany;
email: hagen.neidhardt@wias-berlin.de}, 
Artur \textsc{Stephan}\footnote{A. Stephan: HU Berlin, Institut f\"ur Mathematik, Unter den Linden 6, D-10099 Berlin, Germany;
email: stephan@math.hu-berlin.de}, and Valentin A. \textsc{Zagrebnov}\footnote{V.A.Zagrebnov: Universit\'{e} d'Aix-Marseille - Institut de Math\'{e}matiques de Marseille  (UMR 7373), CMI - Technop\^{o}le Ch\^{a}teau-Gombert, 39, rue F. Joliot Curie, 13453 Marseille, France, email: valentin.zagrebnov@univ-amu.fr}}
\begin{document}
\maketitle

\begin{abstract}
In the present paper we advocate the Howland-Evans approach
to solution of the abstract non-autonomous Cauchy problem (non-ACP) in a separable Banach space $X$.
The main idea is to reformulate this problem as an autonomous Cauchy problem (ACP) in a new  Banach space
$ L^p(\cI,X)$, $p \in [1,\infty)$, consisting of $X$-valued functions on the time-interval $\cI$.
The fundamental observation is a one-to-one correspondence between solution operators (propagators)
for a non-ACP and the corresponding evolution semigroups for ACP in $L^p(\cI,X)$.
We show that the latter also allows to apply a full power of the operator-theoretical methods to scrutinise
the non-ACP including the proof of the Trotter product approximation formulae with operator-norm estimate
of the rate of convergence. The paper extends and improves some recent results in this direction in particular
for Hilbert spaces.
\end{abstract}
\bigskip\bigskip\bigskip
\thanks{\noindent
Keywords: Trotter product formula, convergence rate, approximation, evolution equations, solution operator,
extension theory, perturbation theory, operator splitting\\
Primary 34G10, 47D06, 34K30; Secondary 47A55.}

\newpage
\tableofcontents 

\section{Introduction} \label{sec:1}
The theory of evolution equations plays an important role in various areas of pure and applied mathematics,
physics and other natural sciences. Since the early 1950s, starting with papers by T.Kato \cite{Kato1953} and
R.S.Phillips \cite{Phillips1953}, research in this field became very active and it still enjoys a lot of
attention. A comprehensive introduction to this topic is presented in \cite[Chapter VI. 9.]{EngNag2000} and
also in the book by W.Tanabe \cite{Tan1979}.

A general Cauchy problem for linear non-autonomous evolution equations in a Banach space has the form
\begin{align}\label{CauchyProblem}
 \dot u(t) = -C(t)u(t), ~~ u(s)=u_s \in X, ~~ 0 <s\leq t\leq T,
\end{align}
where $\{C(t)\}_{t\in \cI}$ is a one-parameter (time-dependent) family of closed linear operators in the
separable Banach space $X$. Here the time-interval $\cI:=[0,T]\subset \R$ and we also introduce $\cI_0 : =(0,T]$.
To solve the non-autonomous Cauchy problem (non-ACP) \eqref{CauchyProblem} means to find a so-called
\textit{solution operator} (or \textit{propagator}): $\{U(t,s)\}_{(t,s)\in \Delta}$,
$\Delta=\{(t,s)\in \cI_0\times \cI_0: 0<s\leq t\leq T\}$, with the property that $u(t)=U(t,s)u_s$,
$(t,s)\in \Delta$, is in a certain sense a solution of the problem \eqref{CauchyProblem} for an appropriate set of initial data $u_s$.

By definition, propagator $\{U(t ,s)\}_{(t,s)\in \Delta}$ is supposed to be a strongly continuous
operator-valued function $U(\cd ,\cd):\Delta\rightarrow \cl{B}(X)$ satisfying the properties:
\begin{align*}\label{FunctionalEquation}
&U(t,t)=I \quad \mbox{for} \quad t\in \cI_0 \ ,\\
&U(t,r)U(r,s)=U(t,s) \quad \mbox{for} \quad t,r,s\in \cI_0 \quad \mbox{with~} \quad s\leq r\leq t \ ,\\
&\|U\|_{\cB(X)} :=\sup_{(t,s)\in\Delta}\|U(t,s)\|_{\cl{B}(X)}<\infty \ .
\end{align*}
For details see Definition \ref{SolutionDefinition} in \S\ref{sec:3.1}.

We note that there are essentially two different approaches to solve the abstract linear non-ACP
\eqref{CauchyProblem} in the normed vector spaces.

The first method consists of approximation of the operator family $\{C(t)\}_{t\in \cI}$ by operators
$\{\{C_n(t)\}_{t\in \cI}\}_{n\in \N}$, for which the corresponding Cauchy problem
\begin{align*}
 \dot u(t) = -C_n(t)u(t), ~~ u(s)=u_s \in X, ~~ 0<s\leq t\leq T
\end{align*}
can be easily solved. Often, the family of operators $\{C(t)\}_{t\in \cI}$ is approximated by a piecewise
constant operators, see T.Kato \cite{Kato1970, Kato1973}.
Then one encounters the problem: In which sense the sequence of approximating propagators
$\{\{U_n(t,s)\}_{(t,s)\in \Delta}\}_{n\in\N}$ converges to the solution operator $\{U(t,s)\}_{(t,s)\in \Delta}$
of the non-ACP \eqref{CauchyProblem} ?

Another approach allows to solve the problem \eqref{CauchyProblem} using perturbation, or extension,
theory for linear operators. It does not need any approximation scheme, see for example
\cite{EngNag2000,Nei1981,NeiZag2009}. This approach is quite flexible and can be used in
very general settings. Its main idea can be described as follows:

The non-ACP in $X$ can be reformulated as an \textit{autonomous} Cauchy problem
(ACP) in a new  Banach space $ L^p(\cI,X)$, $p \in [1,\infty)$, of $p$-summable functions on the
time-interval $\cI$ with values in the Banach space $X$.

In the second approach a central notion is the \textit{evolution generator} $\cl K$ on $ L^p(\cI,X)$.
It generates a semigroup $\{\cU(\gt) = e^{-\gt \cK}\}_{\gt\ge0}$ on $L^p(\cI,X)$
which is called an \textit{evolution semigroup}. In turn the evolution semigroup on $L^p(\cI,X)$
is entirely defined by propagator $\{U(t,s)\}_{(t,s)\in\gD}$ in such a way that the representation
\begin{equation}\label{CorrespondenceEvolutionSemigroupPropagator}
\begin{split}
 (e^{-\tau \cl K}f)(t)=(\cl U(\tau)f)(t)=
 \begin{cases}
  U(t,t-\tau)f(t-\tau), &{\rm ~~if~} t\in (\tau, T] \ , \\
  0, &{\rm ~~if~} 0\leq t\leq \tau \ ,
 \end{cases}
\end{split}
\end{equation}
holds for any $f\in  L^p(\cI,X)$. In the following we use the short notation
\begin{align*}
 (e^{-\tau \cl K}f)(t)=(\cl U(\tau)f)(t)=U(t,t-\tau)\chi_\cI(t-\tau)f(t-\tau) \ .
\end{align*}
It turns out that there is a one-to-one correspondence between the set of all evolution generators and the
set of all propagators. Moreover, the important observation is that the set of all evolution generators in
$L^p(\cI,X)$ can be characterised quite independently from propagators,
see Theorem \ref{PropagatorEvolutionGeneratorCorrespondeceLpSetting}.

Notice that in this paper we use a definition of the generator of a semigroup which differs from the
usual one by the sign, see \eqref{CorrespondenceEvolutionSemigroupPropagator}.
It turns out that this choice of definition is more convenient for our presentation.

The first problem we have to solve is: How to find the evolution generator for a non-ACP~\eqref{CauchyProblem} ?
To this aim we introduce the so-called \textit{evolution pre-generator}
\begin{align*}
 \widetilde{\cl K} = D_0 + \cl C, \quad \dom(\widetilde{\cl K}) = \dom(D_0)\cap \dom(\cl C)\subset
 L^p(\cI,X) \ ,
\end{align*}
where $D_0$ is the generator of the right-shift semigroup and $\cl C$ is the multiplication operator
induced by the operator family $\{C(t)\}_{t\in \cI}$ in $L^p(\cI,X)$. Appropriate assumptions
on the family $\{C(t)\}_{t\in \cI}$ guarantee that operator $\cl C$ is a generator in $ L^p(\cI,X)$.
If $\{U(t,s)\}_{(t,s)\in\Delta}$ is the solution operator of the non-ACP
\eqref{CauchyProblem}, then it turns out that the generator $\cK$ of the associated evolution semigroup
$\{\cU(\gt)\}_{\gt \ge 0}$ defined by \eqref{CorrespondenceEvolutionSemigroupPropagator} is a closed operator
extension of the evolution pre-generator $\widetilde{\cl K}$.
Conversely, if the evolution pre-generator $\widetilde{\cK}$ admits a closed extension, which is
an evolution generator, then the corresponding propagator $\{U(t,s\}_{[t,s)\in\gD}$ can be regarded as a
solution operator of the non-ACP \eqref{CauchyProblem}.

In general, it is difficult to answer the question: whether an evolution pre-generator admits a closed  extension,
which is an evolution generator ? However, the problem simplifies if the pre-generator is closable and its closure
is a generator. In this case one gets that the closure is already an \textit{evolution generator}.
Then obviously the evolution pre-generator admits only \textit{one} extension, which is a generator and, hence,
which is an evolution generator. This means, that the non-ACP \eqref{CauchyProblem} is solvable and even
uniquely.

From the point of view of the operator theory the problem formulated above fits into the question: whether
the sum of two generators is \textit{in essential} a generator, i.e. whether the closure of the sum of two
generators is a generator.

If the sum of two generators $A$ and $B$ of contractions semigroups in some Banach space is in essential a
generator, then the so-called  \textit{Trotter product formula}
\begin{align*}
 e^{-\tau C} = \slim (e^{-\tau A/n}e^{-\tau B/n})^n, \quad C:= \overline{A+B} \ ,
\end{align*}
in the \textit{strong} operator topology, is valid for the closure $\overline{A+B}$.
This formula goes back to Sophus Lie (1875) for bounded linear operators. Later it was generalised by
H.Trotter to unbounded generators of contraction semigroups, see \cite{Trotter1959}.
The formula admits a further generalisation to an arbitrary pair $\{A,B\}$ of generators of semigroups
if their semigroups satisfy a so-called Trotter \textit{stability} condition, cf. Proposition
\ref{StabilityProposition}.

Note that generalisation \cite{Trotter1959} says nothing about the convergence-rate of the Trotter product
formula and by consequence about the error-bound for approximation by this formula the solution operators. 
To this aim one has to consider the convergence of the Trotter product formula in the 
operator-norm topology. For the case of Banach spaces see \cite{CachZag2001}. However, in \cite{CachZag2001} 
the operator $A$ was assumed to be generator of a holomorphic semigroup. In our case, this assumption is not 
satisfied for a principal reason: the evolution semigroup (\ref{CorrespondenceEvolutionSemigroupPropagator}) 
can never be a holomorphic semigroup! Nevertheless, some observations of \cite{CachZag2001} admit a 
generalisation to the case of evolution semigroups. We discuss this point in  Remark \ref{rem:7.9}.

Finally, after having determined the convergence rate of the Trotter product formula in the operator-norm,
one has to carry over this result to the propagator approximations. It turns out that the Trotter product
formula yields an approximation of the propagator $\{U(t,s)\}_{(t,s)\in \gD}$ in the operator norm, which has
(uniformly in $\gD$) the \textit{same} convergence-rate as the Trotter product formula for the evolution
semigroup, see Theorems  \ref{TheoremEstimate} and \ref{EstimatePropagators}.

We express a hope that these results might be useful in applications since they give a uniform error estimate
for a discretized approximation of the solution operator  $\{U(t,s)\}_{(t,s)\in \gD}$ for the non-ACP
\eqref{CauchyProblem}, see \S \ref{sec:7}. In particular, it concerns the numerical simulations, where
as a palliative approach one uses some domain-dependent error estimates for operator \textit{splitting}
schemes in the strong operator topology \cite{Batkai2011}.

Now we give an overview of the contents of the paper in more details.
Our aim is analysis a linear non-ACP of the form
\begin{align}\label{EvolutionProblem}
 \dot u(t) = - Au(t) - B(t)u(t), ~~ u(s)=u_s \in X, ~~ 0<s\leq t\leq T \ ,
\end{align}
where $A$ is a generator of a bounded holomorphic semigroup and $\{B(t)\}_{t\in \cI}$ is a family of the closed
(for any time-interval $\cI=[0,T]$) linear operators in $X$. To proceed we make the following assumptions:
%
\begin{assump}\label{ass:1.1}
{\rm
Let $\ga \in (0,1)$ and $X$ be a (separable) Banach space.

 \item[\;\;(A1)] The operator $A$ is a generator of a bounded holomorphic semigroup of class $\cl G(M_A,0)$
 (\cite{Kato1980}, Ch.IX, \S1.4) with zero in the resolvent set: $0\in\varrho(A)$.

 \item[\;\;(A2)] The operators $\{B(t)\}_{t\in \cI}$ are densely defined and closed for a.e.
$t\in \cI$ and it holds that $\dom(A)\subset\dom(B(t))$ for a.e. $t\in \cI$. Moreover, for all $x\in \dom(A)$
the function $t\mapsto B(t)x$ is strongly measurable.

 \item[\;\;(A3)] For a.e. $t\in \cI$ and some $\ga \in (0,1)$ we demand that $\dom(A^\alpha)\subset\dom(B(t))$ and that
 \begin{align*}
  C_\alpha:=\mathrm{ess ~sup}_{t\in \cI}\|B(t)A^{-\alpha}\|_{\cl B(X)}<\infty \ .
 \end{align*}

\item[\;\;(A4)] Let $\{B(t)\}_{t\in\cI}$ be a family of generators in $X$ that for all $t\in \cI$
belong to the same class $\cl G(M_B,\beta)$. The function $\cI\ni t\mapsto (B(t)+\xi)^{-1}x\in X$ is
strongly measurable for any $x\in X$ and any $\xi>\beta$.

 \item [\;\;(A5)] We assume that $\dom(A^*)\subset \dom(B(t)^*)$ and
 \begin{align*}
 C_1^*:={\rm ~ess~sup}_{t\in \cI}\|B(t)^*(A^*)^{-1}\|_{\cl B(X^*)}<\infty,
 \end{align*}
 where $A^*$ and  $B(t)^*$ denote operators which are adjoint of $A$ and $B(t)$, respectively.

 \item[\;\;(A6)] There exists $\gb \in (\ga,1)$ and a constant $L_\gb > 0$ such that
 for a.e. $t,s\in \cI$ one has the estimate:
 \begin{align*}
  \|A^{-1}(B(t)-B(s))A^{-\alpha}\|_{\cl B(X)}\leq L_\beta|t-s|^\beta \ .
 \end{align*}
}
\end{assump}

We comment here that, the assumptions (A4) and (A3) imply assumption (A2). So, assuming (A4) and (A3) we
can drop the assumption (A2).

Let $\cl A$ and $\cB$ be the multiplication operators \textit{induced} by $A$ and
$\{B(t)\}_{t\in \cI}$ in $L^p(\cI,X)$. Further let $D_0$ be the generator of the right-shift semigroup in
$L^p(\cI,X)$. Note that since $A$ is a semigroup generator in $X$, the operator $\cl A$ in the space $L^p(\cI,X)$
is also a generator. Moreover, the semigroup $\{e^{-\tau\cl A}\}_{\tau\geq0}$ commutes with the semigroup
$\{e^{-\tau D_0}\}_{\tau\geq0}$. Therefore, the product $\{e^{-\tau\cl A}e^{-\tau D_0}\}_{\tau\geq0}$ defines a
semigroup and its generator is denoted  by $\cl K_0$. Note that $\cl K_0 = \overline{D_0 + \cA}$ , i.e.
a closure of the operator sum $D_0 + \cA$. In general, domain of the generator $\cl K_0$ can be
larger than $\dom(\cl A)\cap\dom(D_0)$. A widely used assumption about the operator $A$ is its
\textit{maximal parabolic regularity}, see \cite{Acquistapace1987, Prato1984, PruessSchnaubelt2001, Arendt2007}.
This means that the operator sum $D_0+\cl A$ is already closed, i.e. $\cK_0 = D_0 + \cA$ and $\dom(\cl K_0)=
\dom(\cl A)\cap \dom(D_0)$. In this paper the maximal parabolic regularity is \textit{not} supposed
for our purposes.

Now our first of the main results can be formulated as follows:
%
\begin{thm}\label{TheoremKbecomesGeneratorIntroduction}
Let the assumptions (A1), (A2) and (A3) be satisfied.
Then, the operator $\cl K :=\cl K_0+\cl B$ with $\dom(\cl K)=\dom(\cl K_0)\cap \dom(\cl B)$,
is an evolution generator in $L^p(\cI,X)$, $p \in [1,\infty)$, and  the non-autonomous Cauchy  problem
\eqref{EvolutionProblem}
has a unique solution operator $\{U(t,s)\}_{(t,s)\in \gD}$ in the sense of Definition \ref{SolutionDefinition}.
\end{thm}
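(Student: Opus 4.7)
The plan is to generate $\cl K$ from $\cl K_0$ by a Miyadera--Voigt type perturbation argument and then to recognise the resulting semigroup as an evolution semigroup via Theorem~\ref{PropagatorEvolutionGeneratorCorrespondeceLpSetting}.

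First I collect the smoothing estimate. Since $A$ generates a bounded holomorphic semigroup with $0\in\varrho(A)$, the pointwise multiplication operator $\cl A$ inherits the same properties on $L^p(\cI,X)$, giving $\|\cl A^\alpha e^{-\tau\cl A}\|_{\cl B(L^p(\cI,X))}\le M_\alpha\tau^{-\alpha}$ for all $\tau>0$ and the $\alpha\in(0,1)$ of (A3). Because $\cl A$ commutes with the truncated right-shift semigroup $\{e^{-\tau D_0}\}_{\tau\ge 0}$, the product $e^{-\tau\cl K_0}=e^{-\tau\cl A}e^{-\tau D_0}$ is a $C_0$-semigroup on $L^p(\cI,X)$ which is nilpotent for $\tau\ge T$. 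By (A3), the essentially bounded operator-valued function $t\mapsto B(t)A^{-\alpha}$ induces a bounded multiplication operator on $L^p(\cI,X)$ with $\|\cl B\cl A^{-\alpha}\|\le C_\alpha$. Combining these estimates,
\begin{align*}
\int_0^\gamma \|\cl B\, e^{-s\cl K_0}f\|_{L^p}\, ds
&\le C_\alpha M_\alpha \int_0^\gamma s^{-\alpha}\, ds\,\|f\|_{L^p}\\
&=\frac{C_\alpha M_\alpha}{1-\alpha}\,\gamma^{1-\alpha}\,\|f\|_{L^p},
\end{align*}
which is $<\|f\|_{L^p}$ for $\gamma$ small enough since $\alpha<1$. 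The Miyadera--Voigt perturbation theorem then produces a $C_0$-semigroup $\{\cl U(\tau)=e^{-\tau\cl K}\}_{\tau\ge 0}$ on $L^p(\cI,X)$ generated by $\cl K=\cl K_0+\cl B$. Moreover, the representation $\cl A^\alpha(\lambda+\cl K_0)^{-1}=\int_0^T e^{-\lambda\tau}\cl A^\alpha e^{-\tau\cl A}e^{-\tau D_0}\,d\tau$ is bounded in operator norm by closedness of $\cl A^\alpha$ and integrability of $\tau^{-\alpha}$, so $\dom(\cl K_0)\subset\dom(\cl A^\alpha)\subset\dom(\cl B)$ via (A3), giving $\dom(\cl K)=\dom(\cl K_0)\cap\dom(\cl B)=\dom(\cl K_0)$.

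To identify $\cl U(\tau)$ as an evolution semigroup, I observe that $\cl A$ and $\cl B$ are both multiplication operators in $t$, so each commutes with multiplication $M_\phi$ by any $\phi\in C(\cI)$, whereas $[D_0,M_\phi]=-M_{\phi'}$ is the defining identity of $D_0$. Consequently $[\cl K,M_\phi]=-M_{\phi'}$ on a core. Propagating this identity through the Dyson-series expansion of $\cl U(\tau)$, which converges in operator norm by the Miyadera estimate, yields the integrated commutation $\cl U(\tau)M_\phi=M_{\phi(\cdot-\tau)\chi_{[\tau,T]}(\cdot)}\cl U(\tau)$ characterising evolution semigroups, while each Dyson term carries a factor $e^{-(\tau-s)\cl K_0}$ that vanishes for $\tau\ge T$, forcing $\cl U(\tau)=0$ there. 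These two properties are exactly the hypotheses of Theorem~\ref{PropagatorEvolutionGeneratorCorrespondeceLpSetting}, which then recognises $\cl K$ as an evolution generator and delivers a unique propagator $\{U(t,s)\}_{(t,s)\in\gD}$ satisfying (\ref{CorrespondenceEvolutionSemigroupPropagator}); this is the required solution operator of (\ref{EvolutionProblem}), and uniqueness comes for free from the one-to-one correspondence.

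The main obstacle I foresee is the identification step rather than generation: Miyadera--Voigt gives a $C_0$-semigroup once (A1) and (A3) are in hand, but promoting the infinitesimal commutator $[\cl K,M_\phi]=-M_{\phi'}$ to the integrated relation with $\cl U(\tau)$ requires either a careful term-by-term Dyson-series argument or a core argument showing $M_\phi\dom(\cl K)\subset\dom(\cl K)$, which holds because $M_\phi$ commutes with both $\cl A$ (hence with $\cl A^{-\alpha}$ and the resolvent of $\cl K_0$) and $\cl B$. Both routes lean crucially on $\cl B$ being a $t$-multiplication operator; without that, the causal/nilpotent evolution structure would not survive the perturbation. The structural checks then unlock Theorem~\ref{PropagatorEvolutionGeneratorCorrespondeceLpSetting} and complete the proof.
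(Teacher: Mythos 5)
Your generation step is essentially the paper's own argument: the smoothing bound $\|\cl B e^{-s\cl K_0}\|_{\cl B(L^p(\cI,X))}\le C_\alpha M^A_\alpha s^{-\alpha}$ coming from (A3) and the holomorphy of $e^{-sA}$, followed by the Miyadera--Voigt theorem (Proposition \ref{VoigtsTheorem}) applied on the core $\cl D=\dom(D_0)\cap\dom(\cl A)$, is exactly how Proposition \ref{KbecomesGenerator} produces the generator $\cl K=\cl K_0+\cl B$ with $\dom(\cl K)=\dom(\cl K_0)$. (Proposition \ref{VoigtsTheorem} also requires continuity of $\tau\mapsto\cl B e^{-\tau\cl K_0}f$ on $[0,\infty)$; the paper checks this via $\|\cl B(e^{-\tau\cl K_0}-I)f\|\le \mathrm{const}\cdot\tau^{1-\alpha}\|\cl K_0f\|$, and you should record that step. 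Your extra observation $\dom(\cl K_0)\subset\dom(\cl A^\alpha)\subset\dom(\cl B)$ is Lemma \ref{L0AalphaDomainsInclusions} and correctly reconciles $\dom(\cl K_0)\cap\dom(\cl B)$ with $\dom(\cl K_0)$.)

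The genuine gaps are in the identification of $\cl K$ as an \emph{evolution} generator and in the uniqueness claim. First, Theorem \ref{PropagatorEvolutionGeneratorCorrespondeceLpSetting} does not take ``integrated commutation plus nilpotency'' as its hypotheses: its hypothesis is that $\cl K$ is an evolution generator in the sense of Definition \ref{evol-pre-gener}, which in addition requires $\dom(\cl K)\subset C(\cI,X)$ and that $\dom(\cl K)$ have a dense cross-section. Your Dyson-series argument establishes neither, and your alternative core argument yields only the $M(\phi)$-invariance and the commutator identity. The paper closes this by a different mechanism (Theorem \ref{EvolutionProblemUniqueSolution}): the a priori bound $\|f\|_{C}\le M(\xi-\beta)^{-(p-1)/p}\|(\widetilde{\cl K}+\xi)f\|_{L^p}$ from \cite{Nei1981} shows that every graph limit of elements of $\dom(\widetilde{\cl K})\subset C(\cI,X)$ is again continuous, the commutation relation is transported to the closure by graph limits, and the dense cross-section is inherited from $\dom(D_0)\cap\dom(\cl A)$ (Lemma \ref{L0Properties}). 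If you prefer to keep the route ``a $C_0$-semigroup commuting with $M(\phi)$ in the shifted sense is an evolution semigroup,'' you must prove or cite that characterisation separately; it is not among the results stated here. Second, uniqueness does \emph{not} come for free from the one-to-one correspondence: Definition \ref{SolutionDefinition}(iii) demands that $\widetilde{\cl K}$ admit only \emph{one} extension which is an evolution generator, and the bijection between evolution generators and propagators does not by itself exclude two distinct evolution-generator extensions with two distinct propagators. The missing step, supplied in the proof of Theorem \ref{EvolutionProblemUniqueSolution}, is that any closed extension $\cl K'\supseteq\cl K=\overline{\widetilde{\cl K}}$ which is itself a generator must coincide with $\cl K$, proved via $\frac{d}{ds}\,e^{-(\tau-s)\cl K'}e^{-s\cl K}f=0$ for $f\in\dom(\cl K)$. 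Without this (or an equivalent maximality argument), the uniqueness assertion is unsupported.
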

%
The proof of this theorem mainly uses a perturbation theory due to J.Voigt \cite{Voigt1977},
see Proposition \ref{VoigtsTheorem}. Note that the theorem holds without assuming that operators
$\{B(t)\}_{t\in\cI}$ are generators.

We comment that if the assumption (A4) is satisfied, then the induced multiplication operator $\cl B$
becomes a generator that also belongs to $\cG(M,\gb)$. A pair $\{\cK_0,\cB\}$ is called
\textit{Trotter-stable} if it satisfies the condition
\begin{align*}
 \sup_{n\in \dN}\sup_{\gt \ge 0}\left\|\left(e^{-\tau\cK_0/n}e^{-\tau\cB /n}\right)^n\right\| < \infty \ .
\end{align*}
If the pair $\{\cK_0,\cB_0\}$ is Trotter-stable,  then by the Trotter product formula the
evolution semigroup
$\{e^{-\gt \cK}\}_{\gt \ge 0}$ admits the representation
\begin{align}\label{eq:1.4}
 e^{-\tau \cl K} = \slim (e^{-\tau\cB /n}e^{-\tau\cK_0/n})^n.
\end{align}
It turns out that the pair $\{\cB,\cK_0\}$ is Trotter-stable if the operator family $\{B(t)\}_{t\in\cl I}$ is
$A$-stable (see Definition \ref{StabilityDefinition}). Let us mention here that the pair $\{\cK_0,\cB\}$ is
Trotter-stable if and only if the pair
$\{\cB,\cK_0\}$ is Trotter-stable, i.e. the estimate:
\begin{align*}
 \sup_{n\in \dN}\sup_{\gt \ge 0}\left\|\left(e^{-\tau\cB/n}e^{-\tau\cK_0 /n}\right)^n\right\| < \infty \ ,
\end{align*}
holds. In particular this yields that one can interchange operators $\cK_0$ and $\cB$ in formula
\eqref{eq:1.4}. Note that Trotter stability condition is always satisfied for generators of contraction
semigroups.

Let $\{U(t,s)\}_{(t,s)\in \Delta}$ be the propagator corresponding to the evolution semigroup
$\{e^{-\tau \cl K}\}_{\tau\geq0}$ via (\ref{CorrespondenceEvolutionSemigroupPropagator}). Then the
Trotter product formula yields an approximation of the propagator $\{U(t,s)\}_{(t,s)\in \Delta}$ in
the strong operator topology and we prove in this paper the following assertion:
%
\begin{thm}\label{StrongConvergencePropagatorIntroduction}
Let the assumptions (A1), (A3) and (A4) be satisfied. If the family $\{B(t)\}_{t\in\cI}$ is $A$-stable
(see Definition \ref{StabilityDefinition}), then
\begin{equation}\label{eq:1.6a}
\lim_{n\to\infty}\sup_{\tau\in\cI}\int_0^{T-\tau}\|\{U_n(s+\tau,s)-U(s+\tau,s)\}x\|^p_Xds = 0, \quad x \in X,
\end{equation}
for any $p\in[1,\infty)$, where the Trotter product approximation $\{\{U_n(t,s)\}_{(t,s)\in \Delta}\}_{n\in\N}$
is defined by
\begin{equation}\label{ApproximationPropagatorIntroduction}
\begin{split}
 U_n(t,s)  &:= \prod_{j=1}^{n\leftarrow} G_j(t,s\,;n), \quad n = 1,2,\ldots\:,\\
G_j(t,s\,;n)   &:= e^{-\frac{t-s} n B(s+ j\frac{t-s} n)}e^{-\frac {t-s} n A},\quad j = 0,1,2,\ldots,n,
\end{split}
\end{equation}
$(t,s) \in \gD$, with the increasingly ordered product in $j$ from the right to the left.
\end{thm}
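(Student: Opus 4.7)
The strategy is to transport the approximation problem from the $X$-valued setting to $L^p(\cI, X)$ via the evolution-semigroup/propagator correspondence \eqref{CorrespondenceEvolutionSemigroupPropagator}, apply the strong Trotter product formula for the evolution semigroup $\{e^{-\tau\cK}\}_{\tau\ge 0}$ there, and then extract the propagator-level conclusion by specialising to constant $L^p$-functions.

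First, I would assemble the Trotter formula at the $L^p$-level. Under (A1), (A3), (A4), Theorem \ref{TheoremKbecomesGeneratorIntroduction} provides the evolution generator $\cK = \cK_0 + \cB$ in $L^p(\cI,X)$; by (A4) the multiplication operator $\cB$ is itself a semigroup generator in $L^p(\cI,X)$; and, as recorded in the discussion preceding \eqref{eq:1.4}, the $A$-stability hypothesis on $\{B(t)\}_{t\in\cI}$ translates into Trotter stability of the pair $\{\cB,\cK_0\}$. The strong Trotter product formula therefore applies and yields
\begin{equation*}
\slim (e^{-(\tau/n)\cB}e^{-(\tau/n)\cK_0})^n \;=\; e^{-\tau\cK} \qquad \text{in } L^p(\cI,X),\ \tau\ge0.
\end{equation*}

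Next I would make the iterated Trotter product explicit at the function level. Writing $h := \tau/n$ and using the elementary formulas
\begin{equation*}
(e^{-h\cK_0}g)(t) \;=\; e^{-hA}g(t-h)\chi_\cI(t-h),\qquad (e^{-h\cB}g)(t) \;=\; e^{-hB(t)}g(t),
\end{equation*}
a direct induction on $n$ produces
\begin{equation*}
\bigl((e^{-h\cB}e^{-h\cK_0})^n f\bigr)(t) \;=\; U_n(t,t-\tau)\,f(t-\tau)\,\chi_\cI(t-\tau),
\end{equation*}
where $U_n$ is the operator from \eqref{ApproximationPropagatorIntroduction} (after reindexing $j = n-k$ to match the right-to-left convention). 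Taking $L^p(\cI,X)$-norms of the difference with $e^{-\tau\cK}f$, invoking \eqref{CorrespondenceEvolutionSemigroupPropagator} and substituting $s = t-\tau$, one obtains for every $f\in L^p(\cI,X)$
\begin{equation*}
\bigl\|\bigl[(e^{-h\cB}e^{-h\cK_0})^n - e^{-\tau\cK}\bigr]f\bigr\|_{L^p(\cI,X)}^p \;=\; \int_0^{T-\tau}\bigl\|\bigl(U_n(s+\tau,s) - U(s+\tau,s)\bigr)f(s)\bigr\|_X^p\,ds.
\end{equation*}
Specialising to $f\equiv x$, which lies in $L^p(\cI,X)$ since $\cI$ is bounded, the right-hand side is precisely the integral in \eqref{eq:1.6a} at fixed $\tau$, and the strong Trotter formula gives pointwise-in-$\tau$ decay to zero.

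The main --- and expected sole --- obstacle is then to promote this pointwise convergence to uniformity in $\tau\in\cI$. Trotter stability delivers the uniform bound $\sup_{n\in\dN,\,\tau\ge 0}\|(e^{-(\tau/n)\cB}e^{-(\tau/n)\cK_0})^n\|<\infty$; combined with strong continuity of $\tau\mapsto e^{-\tau\cK}(x\chi_\cI)$ on the compact parameter set $\cI$ and a standard equicontinuity/$\varepsilon/3$ argument, this upgrades pointwise strong convergence to uniform-in-$\tau$ strong convergence, yielding \eqref{eq:1.6a}.
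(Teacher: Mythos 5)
Your proposal is correct and follows essentially the same route as the paper: pass to $L^p(\cI,X)$, apply the strong Trotter product formula for the pair $\{\cB,\cK_0\}$ (justified via $A$-stability $\Rightarrow$ Trotter stability and the fact that $\cK=\cK_0+\cB$ is a generator), identify $\bigl((e^{-\tau\cB/n}e^{-\tau\cK_0/n})^n-e^{-\tau\cK}\bigr)f$ pointwise with $\{U_n(t,t-\tau)-U(t,t-\tau)\}\chi_\cI(t-\tau)f(t-\tau)$, and evaluate on $f=\phi\otimes x$ with $\phi\equiv 1$. The one place you do extra (and slightly shakier) work is the uniformity in $\tau$: the paper gets this for free because Proposition \ref{ClassicalTrotter} already asserts uniform convergence on compact $\tau$-intervals, whereas your proposed $\varepsilon/3$ upgrade would need equicontinuity in $n$ of $\tau\mapsto(e^{-\tau\cB/n}e^{-\tau\cK_0/n})^nf$, which does not follow from the uniform bound alone.
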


Our second main result shows that the convergence in \eqref{eq:1.6a} can be improved from the \textit{strong}
to the \textit{operator-norm} topology and that the \textit{convergence-rate} can be estimated from above.
%
\begin{thm}
Let the assumptions  (A1), (A3), (A4), (A5), and (A6) be satisfied.
If the family of generators $\{B(t)\}_{t\in\cI}$ is $A$-stable and $\gb \in (\ga,1)$,  then there is a constant
$C_{\alpha, \beta}>0$ such that
 \begin{equation}\label{eq:1.8}
 \esssup_{(t,s)\in\gD}\|U_n(t,s)-U(t,s)\|_{\cl B(X)} \le \frac{C_{\ga,\gb}}{n^{\gb-\ga}}, \quad
 n = 2,3,\ldots \ .
\end{equation}
\end{thm}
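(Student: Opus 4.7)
The plan is to exploit the Howland--Evans correspondence \eqref{CorrespondenceEvolutionSemigroupPropagator} in order to lift the statement to the evolution semigroup setting in $L^p(\cI,X)$. I would first check that, under this correspondence, the discrete approximation $U_n(t,s)$ of \eqref{ApproximationPropagatorIntroduction} is realised exactly by the iterated Trotter product $\bigl(e^{-\tau\cB/n}\,e^{-\tau\cK_0/n}\bigr)^n$ acting in $L^p(\cI,X)$ with $\tau = t-s$: since $e^{-\tau\cA/n}$ is pointwise multiplication by $e^{-\tau A/n}$, $e^{-\tau\cB/n}$ is pointwise multiplication by $e^{-\tau B(\cd)/n}$, and $e^{-\tau D_0/n}$ is the truncated right-shift, iterating $n$ times exactly produces the ordered product of the factors $G_j(t,s;n)$. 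Under this identification, the essential supremum over $\gD$ on the left-hand side of \eqref{eq:1.8} is controlled by an operator norm in $\cB(L^p(\cI,X))$ of the corresponding semigroup difference, which is what Theorem \ref{EstimatePropagators} will be used for at the end.

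Second, I would apply the operator-norm Trotter product rate estimate already developed in the paper (Theorem \ref{TheoremEstimate}) to the generator pair $\{\cK_0,\cB\}$ in $L^p(\cI,X)$. The necessary hypotheses translate cleanly: $A$-stability of $\{B(t)\}_{t\in\cI}$ yields Trotter stability of the lifted pair; (A3) gives the relative bound $\|\cB\cA^{-\ga}\|_{\cB(L^p(\cI,X))} \le C_\ga$; (A6) delivers the Hölder estimate for $\|\cA^{-1}(\cB(\cd) - \cB(s))\cA^{-\ga}\|$ on $\cI$; and (A5) furnishes the dual bound $\|\cB^{\,*}(\cA^{\,*})^{-1}\|\le C_1^{\,*}$ needed to close the telescoping sum from the predual side. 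Balancing the $n^{-\ga}$ loss incurred from the relative boundedness against the $n^{-\gb}$ gain from Hölder continuity then yields the advertised rate $n^{-(\gb-\ga)}$.

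Finally, Theorem \ref{EstimatePropagators} transports this $\cB(L^p(\cI,X))$-estimate for the evolution semigroups back into the $\esssup_{(t,s)\in\gD}$ estimate for the propagators without loss of rate, producing \eqref{eq:1.8}. The assumption $\gb \in (\ga,1)$ is precisely what makes the resulting exponent positive.

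The main obstacle, and the technical heart of the matter, is that the evolution semigroup $\{e^{-\tau\cK_0}\}_{\tau\ge 0} = \{e^{-\tau\cA}\,e^{-\tau D_0}\}_{\tau\ge 0}$ is \emph{never} holomorphic because of the shift generator $D_0$. Consequently, the classical Cachia--Zagrebnov operator-norm Trotter argument \cite{CachZag2001} cannot be applied off the shelf. The remedy, flagged in Remark \ref{rem:7.9} and built into Theorem \ref{TheoremEstimate}, is to extract the analytic smoothing from the $\cA$-factor alone and to compensate for the non-holomorphic shift component by using the Hölder regularity (A6) of $t\mapsto B(t)$. Verifying that this \emph{partial} smoothing is compatible with Chernoff's representation and with the telescoping identity for the Trotter product is exactly where the delicate work lies.
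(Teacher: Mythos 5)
Your plan coincides with the paper's own proof: the identity $\bigl(e^{-\tau\cB/n}e^{-\tau\cK_0/n}\bigr)^n f = U_n(\cdot,\cdot-\tau)\chi_\cI(\cdot-\tau)f(\cdot-\tau)$ is exactly \eqref{eq:6.main}, the operator-norm rate in $L^p(\cI,X)$ is Theorem \ref{TheoremEstimate} (proved by the telescoping sum with the smoothing extracted from the $\cA$-factor alone, precisely as you describe to circumvent the non-holomorphy of $\cK_0$), and the transport back to $\esssup_{(t,s)\in\gD}$ uses the left-shift identity \eqref{L-shift-id} together with Proposition \ref{OperatorNormsBoundedOperator}. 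The only cosmetic issue is that you cite Theorem \ref{EstimatePropagators} for the final transport step even though the statement under proof \emph{is} that theorem; what you actually need there is the multiplication-operator norm equality \eqref{main-equal}, whose mechanism you have correctly identified.
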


Now few remarks are in order. Recall that in \cite{IchinoseTamura1998} Ichinose and Tamura proved under stronger
assumptions a sharper than (\ref{eq:1.8}) convergence rate in the \textit{operator norm}.
Namely, they showed that the it is of the order $O({\ln(n)}/{n})$ if the both $A$ and $B(t)$
are positive self-adjoint operators in a Hilbert space and $\{B(t)\}_{t\geq 0}$ are Kato-infinitesimally-small
with respect to $A$.
On the other hand, in \cite{Batkai2011, Batkai2012} B\'atkai \textit{et al} investigated approximations
of solution
operators for non-autonomous evolution equations by a different type of so-called
\textit{operator splittings} in the \textit{strong} operator topology.
They include, as particular, symmetrised/nonsymmetrised time-dependent Trotter product approximations in
the strong operator topology studied by \cite{VWZ2008, VWZ2009}, as well as some other Trotter-Kato product
formulae, see e.g. \cite{NeiZag1998}. In the first paper \cite{Batkai2011}, the authors proved the strong
operator
convergence and established for the non-autonomous parabolic case an optimal \textit{domain-dependent}
convergence rate for the (sequential) splitting approximation. The second paper \cite{Batkai2012} is
devoted to a detailed analysis of the case of bounded perturbations.

Equation \eqref{EvolutionProblem} describes various problems related to the linear non-ACP.
As an example, we consider in \S \ref{sec:8} the diffusion equation perturbed by a time-dependent
$t \mapsto V(t,\cdot)$ scalar potential:
\begin{align}\label{EvolProbLaplaceAndTimeDependentPotentialIntroduction}
 \dot u(t) = \Delta u(t) - V(t,x)u(t), ~~ u(s)=u_s \in L^q(\Omega), ~~ 0<s\leq t\leq T, ~x\in \Omega,
\end{align}
where $\Omega\subset\R^d$ is a bounded domain with $C^2$- boundaries and $q\in(1,\infty)$. Let
\begin{align*}
 V(t,x):\cI\times\Omega\rightarrow \C, ~~ {\rm Re}(V(t,x))\geq0 {\rm ~~for~t\in \cI, ~a.e.}~x\in\Omega.
\end{align*}
be a measurable scalar time-dependent potential. Assuming regularity of the potential $V(t,x)$,
the conditions (A3), (A5)  and (A6) can be  satisfied. As an example for the case of $d=3$, we have the
following theorem.
%
\begin{thm}\label{ExampleUniqueSolutionIntroduction}
Let $\Omega\subset \R^3$ be a bounded domain with $C^2-$boundary. Let $\alpha\in(0,
{1}/{2})$ and $q\in (3, {3}/{2\alpha})$. Choose $\varrho\in[{3}/{(2\alpha)},\infty]$,
$\beta\in(\alpha,1)$ and $\tau\in[{3}/{(2\alpha+2)},\infty]$. Let $B(t)f=V(t,\cdot)f$ define a
scalar-valued multiplication operator in $X=L^q(\Omega)$
with $V\in L^\infty(\cI, L^{\varrho}(\Omega))\cap C^\beta(\cI, L^\tau(\Omega))$ and ${\rm Re}(V(t,x))\geq0$.
Then, the evolution problem
(\ref{EvolProbLaplaceAndTimeDependentPotentialIntroduction}) has a unique solution operator
$\{U(t,s)\}_{(t,s) \in \gD}$, which admits the approximation
 \begin{align*}
  {\rm sup}_{(t,s)\in\Delta}\|U_n(t,s)-U(t,s)\|_{\cl B(L^q(\Omega))} = O(n^{-(\beta-\alpha)}),
 \end{align*}
 where for $(t,s) \in \gD$ the approximating propagator $U_n(t,s)$ is defined by the product formula
 \begin{equation*}
\begin{split}
  U_n(t,s) &:= \prod^{n\leftarrow}_{j=1} G_j(t,s),\quad n = 1,2,\ldots\; ,\\  
	G_j(t,s\,;n) &:= e^{-\frac{t-s}{n}V(s + j\frac{t-s}{n},\cdot)}e^{\frac{t-s}{n}\Delta}, \quad
j = 1,2,\ldots,n \ .
	\end{split}
 \end{equation*}
\end{thm}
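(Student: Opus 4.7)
The plan is to verify that, with $A := -\Delta$ equipped with Dirichlet boundary conditions on $X := L^q(\Omega)$ and $B(t)$ the multiplication operator by $V(t,\cdot)$, the assumptions (A1), (A3), (A4), (A5), (A6) together with $A$-stability all hold under the stated hypotheses; once this is done, the theorem follows directly from the last main theorem of the introduction.

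First I would establish (A1) and (A4). Because $\Omega$ is bounded with $C^2$-boundary, the Dirichlet Laplacian $A = -\Delta$ on $L^q(\Omega)$, $q \in (1,\infty)$, generates a bounded holomorphic semigroup with $\dom(A) = W^{2,q}(\Omega) \cap W^{1,q}_0(\Omega)$, and $0 \in \varrho(A)$ by the Poincar\'e inequality on bounded domains; the fractional powers give the Bessel-potential characterisation $\dom(A^\alpha) \hookrightarrow H^{2\alpha,q}(\Omega)$ with continuous inclusion. For (A4), since $\mathrm{Re}(V(t,x)) \ge 0$ and $V(t,\cdot) \in L^\varrho(\Omega)$ uniformly in $t$, each multiplication operator $B(t)$ is $m$-accretive on $L^q(\Omega)$ on its maximal domain and hence generates a contraction semigroup; strong measurability of $t \mapsto (B(t)+\xi)^{-1}x$ is inherited from the pointwise measurability of $V$.

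Next I would verify the key resolvent-type bound (A3). By Sobolev embedding in dimension $d=3$, and since $2\alpha q < 3$ (guaranteed by $q < 3/(2\alpha)$), one has
\begin{align*}
\dom(A^\alpha) \hookrightarrow H^{2\alpha,q}(\Omega) \hookrightarrow L^{p}(\Omega), \qquad \tfrac{1}{p} = \tfrac{1}{q} - \tfrac{2\alpha}{3}.
\end{align*}
H\"older's inequality then gives, for $V(t,\cdot) \in L^\varrho(\Omega)$ with $\varrho \ge 3/(2\alpha)$, the bound $\|V(t,\cdot)\,u\|_{L^q} \le \|V(t,\cdot)\|_{L^\varrho}\|u\|_{L^p}$, whence
\begin{align*}
\|B(t)A^{-\alpha}\|_{\cl B(L^q)} \le C\,\|V(t,\cdot)\|_{L^\varrho(\Omega)},
\end{align*}
uniformly bounded in $t$ by hypothesis. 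Assumption (A5) is handled by duality: the adjoint of $A$ in $L^q(\Omega)$ is $-\Delta$ with Dirichlet boundary conditions on $L^{q'}(\Omega)$, and the adjoint of $B(t)$ is multiplication by $\overline{V(t,\cdot)} \in L^\varrho(\Omega)$; because $q > 3$, one has $2q' < 3$ so that $\dom(A^*) = W^{2,q'}\cap W^{1,q'}_0 \hookrightarrow L^{q'_*}$ with $1/q'_* = 1/q' - 2/3$, and the choice $\varrho \ge 3/(2\alpha) \ge 3$ suffices to bound $\|B(t)^\ast(A^\ast)^{-1}\|_{\cl B(L^{q'})}$ by H\"older.

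The main technical step is (A6). Writing
\begin{align*}
A^{-1}(B(t)-B(s))A^{-\alpha} = A^{-1}\circ M_{V(t,\cdot)-V(s,\cdot)}\circ A^{-\alpha},
\end{align*}
I chain the mapping properties: $A^{-\alpha}$ sends $L^q$ into $L^p$ with $1/p = 1/q-2\alpha/3$; multiplication by $V(t,\cdot)-V(s,\cdot) \in L^\tau(\Omega)$ sends $L^p$ into $L^r$ with $1/r = 1/p + 1/\tau$; and $A^{-1}$ sends $L^r$ into $\dom(A) \hookrightarrow L^q$ provided $1/r \le 1/q + 2/3$, which reduces precisely to the assumed condition $\tau \ge 3/(2\alpha+2)$. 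Composing the three bounds and using the H\"older regularity $\|V(t,\cdot)-V(s,\cdot)\|_{L^\tau} \le [V]_{C^\beta(\cI,L^\tau)}|t-s|^\beta$ yields (A6) with constant $L_\beta$ proportional to the $C^\beta$-seminorm of $V$. Finally, $A$-stability is automatic: since each semigroup $\{e^{-\sigma B(t)}\}_{\sigma \ge 0}$ is a contraction (by $\mathrm{Re}\,V \ge 0$) and commutes trivially with itself while $A$ also generates a bounded (in fact contraction) holomorphic semigroup, the product bounds required in Definition \ref{StabilityDefinition} hold uniformly. Applying the last displayed theorem of the introduction gives existence/uniqueness of the propagator and the operator-norm convergence rate $O(n^{-(\beta-\alpha)})$; the approximating formula is exactly $U_n(t,s) = \prod_{j=1}^{n\leftarrow} e^{-\frac{t-s}{n}V(s+j\frac{t-s}{n},\cdot)}e^{\frac{t-s}{n}\Delta}$. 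The hardest book-keeping is getting the Sobolev exponents in (A3), (A5), (A6) to match the stated ranges of $\varrho$ and $\tau$; everything else is routine verification from standard elliptic theory on $C^2$ domains.
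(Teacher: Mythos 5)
Your proposal is correct and follows essentially the same route as the paper: verify (A1)--(A6) for the Dirichlet Laplacian and the multiplication operators via the Sobolev embeddings $\dom(A^\alpha)\subset H^{2\alpha}_q(\Omega)\subset L^r(\Omega)$ and $\dom(A^*)\subset H^{2}_{q'}(\Omega)\subset L^\rho(\Omega)$ combined with H\"older's inequality (which yields exactly the exponent conditions $\varrho\ge 3/(2\alpha)$ and $\tau\ge 3/(2\alpha+2)$), observe that $A$-stability is automatic because all factors are contractions, and invoke Theorem \ref{EstimatePropagators}. The only cosmetic difference is that the paper bounds $A^{-1}(B(t)-B(s))A^{-\alpha}$ through the duality pairing $\la A^{-\alpha}f,\,B(t)^*(A^*)^{-1}g\ra$ with the single relation $1/r+1/\tau+1/\rho=1$, whereas you compose the three mapping properties $L^q\to L^p\to L^r\to L^q$ directly; the resulting conditions are identical.
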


The conditions for other values of the parameters when $d\geq2$, $q\in(1,\infty)$, are formulated in
\S\ref{sec:8}.

This paper is organised as follows. In \S\ref{sec:2} we summarise some basic facts about the semigroup
theory, the fractional powers of operators and the multiplication operators.
In \S\ref{sec:3}, we describe our approach to solution of the non-ACPs.
In \S\ref{sec:4} the existence of unique solution operator for our case of the linear non-ACP is proved.
\S\ref{sec:5} presents the basic properties of stability, whereas \S\ref{sec:6} investigates convergence of
the Trotter-type product approximations in the strong topology. \S\ref{sec:7} contains the proof of the
lifting of these convergence to the operator-norm topology. An application to a nonstationary diffusion
equation is the subject of \S\ref{sec:8}. Appendix (\S\ref{sec:9}) completes the presentation by some
important auxiliary statements and formulae.

Finally we point out that the paper is partially based on the master thesis \cite{Stephan2016} of one of
the authors. There a special case was treated when involved semigroups are contractions.
This allows to avoid stability considerations formulated in \S\ref{sec:5}.
In addition, in \cite{Stephan2016} a similar approach was also developed for the space $C_0(\cI,X)$
instead of $L^p(\cI,X)$. The $C_0(\cI,X)$-approach allows to prove the results similar to that
for $L^p(\cI,X)$, however, under stronger regularity assumptions on the family $\{B(t)\}_{t\in\cI}$.

\section{Recall from the theory of semigroups} \label{sec:2}

Below we recall some basic facts from the operator and semigroup theory, which are indispensable
for our presentation below.

Throughout this paper we are dealing with a separable Banach space denoted by $(X,\|\cd\|_{X})$.
Let $S$ and $T$ be two operators in $X$. If $\dom(S)\subset\dom(T)$
and there are constants $a,b\geq0$ such that
 \begin{align*}
  \|Tx\|_X\leq a\|Sx\|_X+b\|x\|_X, ~~x\in\dom(S),
 \end{align*}
 then the operator $T$ is called $S$-bounded with the relative bound $a$.

We define the resolvent of operator $A$ by $R(\lambda,A)=(A-\lambda)^{-1}:X\rightarrow \dom(A)$
when $\lambda$ is from the resolvent set $\varrho(A)$.
A family $\{T(t)\}_{t\geq0}$ of bounded linear operators on the
Banach space $X$ is called a strongly continuous (one-parameter) semigroup if it satisfies the
functional equation
\begin{align*}
    T(0)=I, ~~T(t+s)=T(t)T(s), ~~t,s\geq0,
\end{align*}
and the orbit maps $[0,\infty) \ni t\mapsto T(t)x$ are continuous for every $x \in X$. In the following we
simply call them semigroups.

For a given semigroup its generator is a linear operator defined by the limit
 \begin{align*}
  Ax:=\lim_{h\searrow0}\frac{1}{h}(x - T(h)x)
 \end{align*}
on domain
 \begin{align*}
  \dom(A):=\{x\in X: \lim_{h\searrow0}\frac{1}{h}(x - T(h)x)~~ \mathrm{exists}\}.
 \end{align*}
Note that in our definition of the semigroup \textit{generator} differs from the \textit{standard} one by the
sign minus, cf. \cite{Kato1980}.

It is well-known that the generator of a strongly continuous semigroup is a closed and densely defined
linear operator, which uniquely determines the semigroup (see e.g. \cite[Theorem
I.1.4]{EngNag2000}). For a given generator $A$ we will write $\{T(t)=e^{- t A}\}_{t \ge 0}$,
for the corresponding semigroup.

Recall that for any semigroup $\{T(t)\}_{t\geq0}$ there are constants $M_A,\gamma_A$, such that it holds
$\|T(t)\|\leq M_A e^{\gamma_A t}$ for all $t\geq0$. These semigroups are known as \textit{quasi-bounded}
of class $\mathcal{G}(M_A,\gamma_A)$ and following the Kato book we write that $A\in\mathcal{G}(M_A,\gamma_A)$
for its generator \cite[Ch.IX]{Kato1980}. If $\gamma_A\leq0$, $\{T(t)\}_{t\geq0}$ is called a \textit{bounded}
semigroup. For any semigroup we can construct a bounded semigroup by adding some constant $\nu\geq\gamma_A$ to
its generator. Then the operator $\tilde{A} := A+\nu$ generates a bounded semigroup $\{\tilde T(t)\}_{t\geq0}$
with $\|\tilde T(t)\|\leq M_A$. If $\|T(t)\|\leq1$, the semigroup is called a \textit{contraction} semigroup
and correspondingly a \textit{quasi-contraction} semigroup, if the property $\|T(t)\|\leq e^{\gamma_At}$ holds.

It is known (see \cite[Ch.IX]{Kato1980}) that for a generator $A\in\cl{G}(M_A,\gamma_A)$ the open half
plane $\{z\in\mathbb{C}:{\rm Re}(z) < - \gamma_A\}$ is contained in the resolvent set $\varrho(A)$
of $A$ and one has the estimate $\|R(\lambda, A)^k\|\leq {M_A}/({- \rm Re}(\lambda)-\gamma_A)^k$ for
the resolvent $R(\lambda, A)= (A - \lambda)^{-1}$ and the natural $k \in \mathbb{N}$.
Note that if $A\in\cl{G}(M_A,\gamma_A)$, then $\tilde{A} = A + \nu \in \cl{G}(M_A, \gamma_A -\nu)$.
Therefore, the open half-plane $\{z\in\mathbb{C}: {\rm Re}(z) < \nu -\gamma_A\}$ is contained in the
resolvent set $\varrho(\tilde A)$.

Note that the semigroup $\{T(t)\}_{t\geq 0}$ on $X$ is called a \textit{bounded holomorphic}
semigroup if its generator $A$ satisfies: $\mathrm{ran}(T(t))\subset\dom(A)$ for all $t>0$, and
$\sup_{t>0}\|tAT(t)\|\leq M <\infty$. Recall, that in this case the bounded semigroup $\{T(t)\}_{t\geq 0}$
has a unique analytic continuation into the open sector
$\{z\in\mathbb{C}:|\arg(z)|<\delta(M)\} \subset\mathbb{C}$ of the angle $\delta(M) >0$, which is
a monotonously decreasing function of $M$ such that $\lim_{M\rightarrow \infty}\delta(M) = 0$, see e.g.
\cite[Ch.1.5]{Zag2003}.

For a short recall from the perturbation theory of semigroups see \S\ref{sec:4}

\subsection{Fractional powers} \label{sec:2.1}

We recall here some facts about the fractional powers of linear operators, see e.g.
\cite[Chapter 2.6]{Pazy1983}. To this end  assume that $A$ is a generator of a bounded holomorphic semigroup
$\{e^{-t A}\}_{t\geq0}$ and $0\in\varrho(A)$. Then the fractional power for $\alpha\in(0,1)$
is defined by
\begin{align*}
 A^{-\alpha}=\frac{1}{\Gamma(\alpha)}\int_0^\infty t^{\alpha-1}
 e^{-tA}dt \ ,
\end{align*}
where $\Gamma: \mathbb{R}_{+} \rightarrow \mathbb{R}$ is the Bernoulli gamma-function.
Moreover, we define $A^{0}=I$. Thus, the operator family $\{A^{-\alpha}\}_{\alpha\geq0}$ defines a semigroup
of bounded linear operators and the operators $A^{-\alpha}$ for $\alpha>0$ are invertible
\cite[2.6.5-6]{Pazy1983}.
So, for $\alpha\geq0$ we can define $A^\alpha:=(A^{-\alpha})^{-1}$.  With this definition, we get
$\dom(A^\alpha)\subset\dom(A^\beta)$ for $\alpha\geq\beta>0$. In particular, we have
$\dom(A)\subset\dom(A^\alpha)$ for every $\alpha\in(0,1)$.

The following facts are also well-known.
\begin{prop}\label{PropertiesFractionalPowers}
Let $A$ be generator of a bounded holomorphic semigroup.
\begin{enumerate}
 \item [\;\;\rm (i)]Then there is a constant $C_0$ such that for all $\mu>0$ it holds
 \begin{align*}
  \|A^{\alpha}(A+\mu)^{-1}\|\leq C_0\mu^{\alpha-1}.
 \end{align*}
 \item [\;\;\rm (ii)]For $\mu>0$ and  $0<\alpha<1$ it holds that
 \begin{align*}
  \dom((A+\mu)^\alpha)=\dom(A^\alpha)
 \end{align*}

\end{enumerate}

\end{prop}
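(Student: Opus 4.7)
For part (i), the plan is to start from the Laplace representation
\[
(A+\mu)^{-1} = \int_0^\infty e^{-\mu t}\, e^{-tA}\, dt,
\]
which is valid because $A\in\cG(M_A,0)$ and $\mu>0$. Since $A$ generates a bounded holomorphic semigroup, the smoothing property $\|Ae^{-tA}\|_{\cB(X)}\le M/t$ for $t>0$ together with a moment (interpolation) inequality between the cases $\alpha=0$ and $\alpha=1$ delivers the estimate $\|A^\alpha e^{-tA}\|_{\cB(X)}\le M_\alpha\, t^{-\alpha}$ for each $\alpha\in(0,1)$. The resulting integrand $e^{-\mu t}A^\alpha e^{-tA}$ is then absolutely convergent in operator norm, so the closed operator $A^\alpha$ can be pulled under the integral; the substitution $s=\mu t$ gives
\[
\|A^\alpha(A+\mu)^{-1}\|_{\cB(X)} \le M_\alpha\int_0^\infty e^{-\mu t}\, t^{-\alpha}\,dt = M_\alpha\,\Gamma(1-\alpha)\,\mu^{\alpha-1},
\]
so one may take $C_0=M_\alpha\Gamma(1-\alpha)$, uniformly in $\mu>0$.

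For part (ii), I would reformulate $\dom((A+\mu)^\alpha)=\dom(A^\alpha)$ as the assertion that the difference $(A+\mu)^\alpha - A^\alpha$, initially defined on the common natural core $\dom(A)=\dom(A+\mu)$, extends to a bounded operator on $X$. My plan is to derive this from the Balakrishnan-type representation
\[
A^\alpha x = \frac{\sin(\alpha\pi)}{\pi}\int_0^\infty \lambda^{\alpha-1} A(A+\lambda)^{-1} x\, d\lambda, \qquad x\in\dom(A),
\]
and the analogous formula with $A$ replaced by $A+\mu$ (valid since $A+\mu$ is also sectorial with $0\in\varrho(A+\mu)$). Subtracting the two identities and invoking the resolvent identity $(A+\lambda)^{-1}-(A+\mu+\lambda)^{-1}=\mu(A+\lambda)^{-1}(A+\mu+\lambda)^{-1}$ collapses the difference to
\[
\big((A+\mu)^\alpha - A^\alpha\big)x = \frac{\sin(\alpha\pi)\,\mu}{\pi}\int_0^\infty \lambda^{\alpha}(A+\lambda)^{-1}(A+\mu+\lambda)^{-1} x\, d\lambda.
\]
Splitting the integration at $\lambda=1$, using continuity of $\lambda\mapsto(A+\lambda)^{-1}$ at the origin (which is available because $0\in\varrho(A)$) and the bound $\|(A+\lambda)^{-1}\|\le M_A/\lambda$ for large $\lambda$, one obtains absolute convergence and boundedness in $X$, with the tail estimate $\int_0^\infty\lambda^{\alpha-1}/(\mu+\lambda)\,d\lambda=\pi\mu^{\alpha-1}/\sin(\alpha\pi)$. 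A closure/core argument then promotes the pointwise identity on $\dom(A)$ to an operator equality $(A+\mu)^\alpha = A^\alpha + \widetilde{T}$ with $\widetilde{T}\in\cB(X)$, and the coincidence of domains follows.

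The step I expect to be most delicate is the last one in (ii): the Balakrishnan identity is manifestly available only on $\dom(A)$, so after establishing the bounded extension $\widetilde{T}$ one must transfer the perturbation formula to the full graphs of the closed operators $A^\alpha$ and $(A+\mu)^\alpha$. The crux is the classical fact that $\dom(A)$ is a core for the fractional powers of a sectorial generator with $0\in\varrho(A)$; I would cite this rather than reprove it, after which the extension is routine. Part (i) itself offers no real obstacle beyond justifying the interpolation bound $\|A^\alpha e^{-tA}\|\le M_\alpha t^{-\alpha}$ and the resulting single integral computation.
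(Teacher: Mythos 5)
The paper offers no proof of Proposition \ref{PropertiesFractionalPowers}: it is introduced with ``the following facts are also well-known'' and implicitly referred back to \cite[Ch.~2.6]{Pazy1983}, so there is no in-paper argument to compare yours against. Your proposal supplies a correct self-contained proof. For (i), the Laplace representation $(A+\mu)^{-1}=\int_0^\infty e^{-\mu t}e^{-tA}\,dt$ combined with the smoothing bound $\|A^\alpha e^{-tA}\|\le M_\alpha t^{-\alpha}$ (which is exactly the paper's Proposition \ref{HolomorphicSemigroupEstimate}) and closedness of $A^\alpha$ under the Bochner integral gives $\|A^\alpha(A+\mu)^{-1}\|\le M_\alpha\Gamma(1-\alpha)\mu^{\alpha-1}$; this is the standard argument and is sound. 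For (ii), the Balakrishnan representation plus the resolvent identity correctly collapses the difference to $\frac{\sin(\alpha\pi)\mu}{\pi}\int_0^\infty\lambda^\alpha(A+\lambda)^{-1}(A+\mu+\lambda)^{-1}x\,d\lambda$, which converges absolutely in operator norm (integrand $O(\lambda^\alpha)$ near $0$ using $0\in\varrho(A)$ -- a standing hypothesis of \S\ref{sec:2.1} that the proposition's statement tacitly assumes -- and $O(\lambda^{\alpha-2})$ at infinity), and the passage from the identity on $\dom(A)$ to equality of domains via closedness of the fractional powers and the core property of $\dom(A)$ is exactly the right closing step; citing that core property (e.g.\ via $x_n=n(A+n)^{-1}x$) is legitimate. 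Two cosmetic remarks: your constant in (i) depends on $\alpha$, which is harmless since $\alpha$ is fixed in the statement; and one should note (as is standard) that the Balakrishnan power coincides with the power $A^\alpha=(A^{-\alpha})^{-1}$ defined in \S\ref{sec:2.1}, which holds precisely because $0\in\varrho(A)$ and $A$ is sectorial.
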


One of the basic tool for analysis of bounded holomorphic evolution semigroups is summarised by the
following proposition.
%
\begin{prop}[{\cite[Theorem 2.6.13]{Pazy1983}}]\label{HolomorphicSemigroupEstimate}
 Let $A$ be generator of a bounded ho\-lomorphic semigroup $U(z)$ and
 $0\in\varrho(A)$. Then for $0<\alpha$, we get
 \begin{align*}
  \sup_{t>0}\|t^\alpha A^\alpha U(t)\|=M^A_\alpha<\infty.
 \end{align*}
\end{prop}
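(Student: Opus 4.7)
The plan is to reduce the general exponent $\alpha>0$ to the case $\alpha\in(0,1)$, and to handle that case via a moment (interpolation) inequality relating $A^\alpha$, $A$, and the identity.

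The case $\alpha=1$ is the defining property of a bounded holomorphic semigroup recalled in the text: $\sup_{t>0}\|tAU(t)\|_{\cB(X)}\le M_1<\infty$. For integer $\alpha=n\in\N$, I would invoke the semigroup law $U(t)=U(t/n)^n$ together with the facts that $\ran(U(s))\subset\dom(A^k)$ for every $k$ and every $s>0$, and that $A\,U(s)=U(s)\,A$ on $\dom(A)$, to obtain $A^n U(t)=(AU(t/n))^n$, whence $\|A^n U(t)\|_{\cB(X)}\le (M_1 n/t)^n$, i.e.\ the claim with exponent $n$.

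The heart of the argument is the fractional case. For $\alpha\in(0,1)$ and $x\in\dom(A)$, I would establish the moment inequality
\begin{equation*}
 \|A^\alpha x\|_X \le C_\alpha \|Ax\|_X^{\alpha}\|x\|_X^{1-\alpha}.
\end{equation*}
Starting from the representation
\begin{equation*}
 A^\alpha x = \frac{\sin(\pi\alpha)}{\pi}\int_0^\infty s^{\alpha-1}(A+s)^{-1}Ax\,ds,
\end{equation*}
which is valid under the standing hypotheses on $A$, I would split the $s$-integral at a free level $\lambda>0$. On $(0,\lambda)$ I would use $(A+s)^{-1}A=I-s(A+s)^{-1}$ together with the fact that $\|(A+s)^{-1}\|$ stays bounded near $s=0$ (which is precisely where $0\in\varrho(A)$ enters) to bound the integrand by a constant times $\|x\|_X$; on $(\lambda,\infty)$ I would use the standard resolvent estimate to bound it by $s^{-1}\|Ax\|_X$. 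This produces $\|A^\alpha x\|_X \le C(\lambda^\alpha \|x\|_X+\lambda^{\alpha-1}\|Ax\|_X)$, and optimising in $\lambda$ yields the claimed inequality. Applying it to $x=U(t)y$ and using $\|AU(t)y\|_X\le M_1 t^{-1}\|y\|_X$ together with $\|U(t)y\|_X\le M_A\|y\|_X$ gives $\|t^\alpha A^\alpha U(t)\|_{\cB(X)}\le C_\alpha M_1^\alpha M_A^{1-\alpha}$.

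For a general $\alpha>0$, write $\alpha=n+\beta$ with $n\in\N\cup\{0\}$ and $\beta\in[0,1)$ (the case $\beta=0$ being already handled above) and factorise $U(t)=U(t/2)U(t/2)$. Setting $y:=U(t/2)x$ and $z:=A^n U(t/2)y$, apply the moment inequality to $A^\beta z$:
\begin{equation*}
 \|A^\alpha U(t)x\|_X\le C_\beta \,\|A^{n+1}U(t/2)y\|_X^{\beta}\,\|A^n U(t/2)y\|_X^{1-\beta}.
\end{equation*}
The integer-case bound controls the two factors by $C(t/2)^{-(n+1)}\|y\|_X$ and $C(t/2)^{-n}\|y\|_X$; the exponents combine as $(n+1)\beta+n(1-\beta)=n+\beta=\alpha$, and $\|y\|_X\le M_A\|x\|_X$ absorbs the extra semigroup factor, producing the desired $t^{-\alpha}$ decay. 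The main obstacle is the moment inequality: one has to ensure integrability of the representation at both endpoints of $(0,\infty)$, and the boundedness at $s=0$ is not automatic in general — it is exactly the assumption $0\in\varrho(A)$ that makes the splitting work cleanly. Everything else is routine bookkeeping.
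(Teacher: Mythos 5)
Your argument is correct, and since the paper gives no proof of this proposition (it is quoted verbatim from Pazy, Theorem 2.6.13), the right comparison is with the cited source: your route — the moment inequality $\|A^\ga x\|\le C_\ga\|Ax\|^\ga\|x\|^{1-\ga}$ obtained by splitting the Balakrishnan integral at an optimised level $\gl$, combined with the defining bound $\sup_{t>0}\|tAU(t)\|<\infty$ and the factorisation $A^nU(t)=(AU(t/n))^n$ for integer powers — is essentially that standard proof. One minor inaccuracy worth flagging: the boundedness of the integrand near $s=0$ already follows from $\|s(A+s)^{-1}\|\le M$ via the identity $(A+s)^{-1}A=I-s(A+s)^{-1}$, so the hypothesis $0\in\varrho(A)$ is not really what makes the splitting work; it is needed to define $A^{-\ga}$ by the paper's integral formula and hence $A^\ga=(A^{-\ga})^{-1}$ at all.
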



\subsection{Multiplication operators} \label{sec:2.2}

Let $\cI=[0,T]$ be a compact interval. We consider the Banach spaces $L^p(\cI,X)$, $p \in [1,\infty)$, of
Lebesgue $p$-summable $X$-valued functions.
The dual space $L^p(\cI,X)^*$ of $L^p(\cI,X)$ is defined by the sesquilinear duality relation
$\la \cdot , \cdot \ra $, which generates bounded functionals:
\begin{equation*}
L^p(\cI,X) \times L^p(\cI,X)^* \ni (f, \Gamma) \mapsto \la f , \Gamma \ra  \in \mathbb{C} \ .
\end{equation*}
Then the following statement characterises the space $L^p(\cI,X)^*$.
\begin{prop}[{\cite[Theorem 1.5.4]{CembranosMendoza1997}}]\label{CharaterizationLpdual}
Let $1\leq p<\infty$, and let $p'$ be defined by $ (p')^{-1}  + (p)^{-1}=1$. Then for each
$\Gamma\in L^p(\cI,X)^*$ there exists a function $\Psi:\cI\rightarrow X^*$ such that {\rm {:}}
\item[\;\;\rm (i)] $\Psi$ is $w^*$-measurable, i.e. measurable are the functions
$t\mapsto \la f(t),\Psi(t)\ra$ for all $f\in L^p(\cI,X)$ ,

\item[\;\;\rm (ii)] The function $\|\Psi(\cdot)\|_{X^*}:t\mapsto \|\Psi(t)\|_{X^*}$ is measurable and
belongs to $L^{p'}(\cI)$,

\item[\;\;\rm (iii)] If $\la f , \Gamma \ra = \int_\cI dt \la f(t),\Psi(t) \ra $ for
    all $f\in L^p(\cI,X)$, then $\|\Gamma\|=\|~\|\Psi(\cdot)\|_{X^*} ~\|_{L^{p'}}$.

Conversely, each $w^*$-measurable function $\Psi:\cI\rightarrow
X^*$, for which there is $g\in L^{p'}(\cI)$ such that
$\|\Psi(t)\|_{X^*}\leq g(t)$ for a.e. $t\in \cI$, induces by {\rm (iii)}
a continuous linear functional $\Gamma$ on $L^p(\cI,X)$, whose norm
is less than or equal to $\|g\|_{L^{p'}}$.
\end{prop}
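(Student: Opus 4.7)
The statement has two halves, and the converse direction is the easier one which I would handle first. Given a $w^*$-measurable $\Psi$ dominated by some $g \in L^{p'}(\cI)$, I define $\la f , \Gamma \ra := \int_\cI \la f(t),\Psi(t)\ra\, dt$ and verify that the integrand is measurable by approximating $f$ with simple functions $\sum_k \chi_{E_k}x_k$ (for which $\la f(t),\Psi(t)\ra$ reduces to the already-measurable combinations $\sum_k \chi_{E_k}(t)\la x_k,\Psi(t)\ra$) and passing to the limit. The pointwise bound $|\la f(t),\Psi(t)\ra|\le \|f(t)\|_X\, g(t)$ combined with H\"older's inequality then yields $|\la f,\Gamma\ra|\le \|f\|_{L^p}\|g\|_{L^{p'}}$, giving boundedness and the norm estimate $\|\Gamma\|\le \|g\|_{L^{p'}}$.

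For the representation direction I would proceed in three stages. \emph{Stage one:} for each fixed $x\in X$, define a scalar set function $\mu_x(E):=\la \chi_E x,\Gamma\ra$ on the Borel sets of $\cI$. Countable additivity follows from strong continuity of the integral and boundedness of $\Gamma$; moreover $|\mu_x(E)|\le \|\Gamma\|\,|E|^{1/p}\|x\|_X$, which shows absolute continuity with respect to Lebesgue measure. The scalar Radon--Nikodym theorem then furnishes a density $g_x\in L^{p'}(\cI)$ with $\mu_x(E)=\int_E g_x(t)\,dt$ and $\|g_x\|_{L^{p'}}\le \|\Gamma\|\,\|x\|_X$. \emph{Stage two:} the map $x\mapsto g_x$ is linear from $X$ to $L^{p'}(\cI)$. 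Choosing a countable dense $\mathbb{Q}$-linear subset $D\subset X$ (here separability of $X$ is essential) and representatives $g_x$, I would modify them on a single Lebesgue-null set so that $t\mapsto g_x(t)$ is pointwise $\mathbb{Q}$-linear and norm-bounded on $D$. This extends for a.e.\ $t$ to a bounded linear functional $\Psi(t)\in X^*$ with $\|\Psi(t)\|_{X^*}\le g(t)$ for some $g\in L^{p'}(\cI)$. Measurability of $t\mapsto\la f(t),\Psi(t)\ra$ for general $f$ follows by density from the case of simple functions, giving (i) and (ii). \emph{Stage three:} the integral representation (iii) is immediate on simple functions by construction and extends by dominated convergence, while the sharp norm identity $\|\Gamma\|=\|\|\Psi(\cdot)\|_{X^*}\|_{L^{p'}}$ comes from the already-proved estimate in one direction and from testing $\Gamma$ against $f$ built out of measurably chosen near-norming vectors for $\Psi(t)$ in the other.

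\textbf{Main obstacle.} The delicate point is the transition from the a.e.-defined scalar densities $\{g_x\}_{x\in D}$ to a single pointwise-defined $\Psi(t)\in X^*$, because each $g_x$ is only defined up to a null set depending on $x$. The passage to a \emph{common} null set requires working with a countable dense $\mathbb{Q}$-linear subset and exploiting the uniform bound $\|g_x\|_{L^{p'}}\le \|\Gamma\|\|x\|_X$ to extend $\Psi(t)$ continuously from $D$ to all of $X$. A parallel measurable-selection issue arises for the reverse norm estimate, where one must produce, in a measurable way, approximate norming vectors in $X$ for the functionals $\Psi(t)$; here again separability of $X$ is the tool that makes the construction go through without assuming that $X^*$ has the Radon--Nikodym property.
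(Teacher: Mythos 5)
The paper itself offers no proof of this proposition: it is imported verbatim as Theorem 1.5.4 of Cembranos--Mendoza, so there is no internal argument to compare yours against. Judged on its own terms, your outline is the standard route to the duality $L^p(\cI,X)^*\cong L^{p'}_{w^*}(\cI,X^*)$ for \emph{separable} $X$ (which is the setting of the paper), and both the converse direction and the three-stage plan for the representation direction are sound: the scalar Radon--Nikodym step, the $\mathbb{Q}$-linear countable dense set, and the norming-vector argument for the sharp norm identity are all the right ingredients.

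There is, however, one step where the justification you give is not yet sufficient, and it sits exactly at the "main obstacle" you identify. Knowing $\|g_x\|_{L^{p'}}\le \|\Gamma\|\,\|x\|_X$ for each individual $x\in D$ does \emph{not} by itself yield an a.e.-pointwise bound $|g_x(t)|\le h(t)\|x\|_X$ with a \emph{single} $h\in L^{p'}(\cI)$ valid simultaneously for all $x\in D$; without such an $h$ the functional $\Psi(t)$ need not be bounded for a.e.\ $t$, and condition (ii) does not follow. The missing idea is to control the countable supremum $h(t):=\sup\{|g_x(t)|: x\in D,\ \|x\|_X\le 1\}$ directly through $\Gamma$: for any finite $x_1,\dots,x_N$ in the unit ball of $D$ and any Borel $E\subset\cI$, partition $E$ into the sets $E_i$ where $|g_{x_i}|$ attains the maximum, choose unimodular measurable $\e_i$ with $\e_i g_{x_i}=|g_{x_i}|$ on $E_i$, and test $\Gamma$ against $f=\sum_i \e_i\chi_{E_i}x_i$, which satisfies $\|f(t)\|_X\le\chi_E(t)$; this gives $\int_E\max_i|g_{x_i}(t)|\,dt\le\|\Gamma\|\,|E|^{1/p}$ for every $E$, hence $\|\max_i|g_{x_i}|\|_{L^{p'}}\le\|\Gamma\|$ by converse H\"older, and monotone convergence then puts $h$ in $L^{p'}$ with $\|h\|_{L^{p'}}\le\|\Gamma\|$. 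Since $\|\Psi(t)\|_{X^*}=h(t)$ a.e.\ by density, this single computation simultaneously closes the gap in Stage two and delivers the reverse inequality $\|\,\|\Psi(\cdot)\|_{X^*}\|_{L^{p'}}\le\|\Gamma\|$ needed in Stage three, so your separate appeal to "near-norming vectors" becomes redundant. With this insertion the proof is complete.
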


An important role plays in the following the  so-called \textit{multiplication} operators on
the Banach space $L^p(\cI,X)$, $p \in [1,\infty)$. A function $\phi\in L^{\infty}(\cI)$ defines a
multiplication operator $M(\phi)$ on $L^p(\cI,X)$  by
 \begin{align*}
  (M(\phi)f)(t):=\phi(t)f(t) ~~{\rm for ~a.e.~} t\in \cI, ~~\dom(M(\phi))=L^p(\cI,X).
 \end{align*}
Moreover, let $C(\cI,X)$ be the Banach space of all continuous functions
$f:\cI \longrightarrow X$ endowed with the supremum norm. By
$C_0(\cI, X)$ we denote the subspace of $C(\cI,X)$ of all continuous functions, which vanish at $t = 0$.
%
\begin{defi}\label{DefinitionDenseCrossSection}
 We say the set $\cl D\subset L^p(\cI,X)$ has a dense \textit{cross-section} in $X$ if
  \item[\;\;\rm (i)] $\cl D\subset L^p(\cI,X)\cap C(\cI,X)$ ,

  \item[\;\;\rm (ii)] for any $t\in\cI_0$ the set $
   [\cl D]_t := \{x\in X: \exists f\in\cl D {\rm ~such~ that~} \hat f
   (t)=x\}$ is dense in $X$ , where $\hat{f}$ denotes the unique \textit{continuous} representative
   of $f \in \cl D$.
\end{defi}

Using definition of the multiplication operator $M(\phi)$ and the cross-section density
property we find a condition when a linear set is dense in $L^p(\cI,X)$. Let us denote by
$W^{k,p}(\cI)$, $k \in \dN$, $p \in [1,\infty]$ the Sobolev space over $\cI$. Then one gets
the following statement.
%
\begin{prop}\label{DensityDenseCrossSectionLp}
 If a linear set $\cl D\subset L^p(\cI,X)$ has a dense cross-section
 in $X$ and if for every $\phi\in W^{1,\infty}(\cI)$ one has {\rm{:}}
$M(\phi)\cl D\subset \cl D$, then $\cl D$ is dense in $L^p(\cI,X)$.
\end{prop}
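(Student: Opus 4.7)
The plan is to reduce density of $\cl D$ in $L^p(\cI,X)$ to approximating elementary functions of the form $\chi_J\otimes x$ with $J\subset\cI_0$ an open interval and $x\in X$. This reduction uses two standard facts: since $p\in[1,\infty)$ and $X$ is separable, finite linear combinations $\sum_i\chi_{E_i}\otimes x_i$ with $E_i\subset\cI$ measurable and $x_i\in X$ are dense in $L^p(\cI,X)$ (by Bochner's theorem); and any measurable $E\subset\cI$ can be approximated in measure by a finite union of open intervals contained in $\cI_0$.

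Next, fix $\epsilon>0$ and a target $\chi_J\otimes x$ with $J=(a,b)\subset\cI_0$. Partition $J$ by $a=t_0<t_1<\cdots<t_N=b$ and pick $s_i\in J_i:=(t_{i-1},t_i)$. The dense cross-section property yields $f_i\in\cl D$ with $\|\hat f_i(s_i)-x\|_X<\epsilon$. Uniform continuity of each representative $\hat f_i$ on $\cI$ (recall $\cl D\subset C(\cI,X)$) lets us refine the partition so that
\[
\|\hat f_i(t)-x\|_X<2\epsilon \quad \text{for all } t\in J_i,
\]
whence in particular $\|\hat f_i(t)\|_X\le\|x\|_X+2\epsilon$ on $J_i$.

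Third, build an almost-partition of unity on $J$: choose $0<\eta\ll\min_i(t_i-t_{i-1})/2$ and define piecewise-linear bumps $\phi_i\in W^{1,\infty}(\cI)$ supported in $\overline{J_i}$, equal to $1$ on $(t_{i-1}+\eta,t_i-\eta)$ and linearly interpolating on the edge strips. Set
\[
g := \sum_{i=1}^N M(\phi_i) f_i.
\]
By linearity of $\cl D$ together with the hypothesis $M(\phi_i)\cl D\subset\cl D$ we have $g\in\cl D$. Since the $\phi_i$ have pairwise essentially disjoint supports, only one summand is active on each $J_i$, and
\[
\|g-\chi_J\otimes x\|_{L^p(\cI,X)}^p=\sum_{i=1}^N\int_{J_i}\|\phi_i(t)\hat f_i(t)-x\|_X^p\,dt \le (b-a)(2\epsilon)^p + 2N\eta\,(2\|x\|_X+2\epsilon)^p,
\]
because on the interior $(t_{i-1}+\eta,t_i-\eta)$ we have $\phi_i\equiv1$ and the integrand is at most $(2\epsilon)^p$, while on the transition strips of total length $2N\eta$ the integrand is bounded by $(\|\hat f_i\|_X+\|x\|_X)^p\le(2\|x\|_X+2\epsilon)^p$. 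Choosing first $\epsilon$ and then $\eta$ sufficiently small drives the error below any prescribed threshold.

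The only nontrivial point is the quantitative interplay between $N$ (fixed after the cross-section approximation) and the width $\eta$ of the transition strips, which must be shrunk after $N$ is fixed. This is legitimate because the assumption $\phi\in W^{1,\infty}(\cI)$ imposes no a priori bound on $\|\phi_i'\|_\infty$, so piecewise-linear bumps of slope $1/\eta$ are admitted for every $\eta>0$; the sup-norm $\|\phi_i\|_\infty\le1$ is all that enters the error estimate. Consequently no measure-theoretic or topological obstruction arises beyond this bookkeeping, and density of $\cl D$ in $L^p(\cI,X)$ follows.
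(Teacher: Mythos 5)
Your argument is essentially correct, but it takes a genuinely different route from the paper. The paper proves density by duality: it takes a functional $\Gamma\in L^p(\cI,X)^*$ annihilating $\cl D$, represents it via Proposition \ref{CharaterizationLpdual} by a $w^*$-measurable $\Psi:\cI\to X^*$, uses the invariance $M(\phi)\cl D\subset\cl D$ to conclude $\int_\cI\phi(t)\la f(t),\Psi(t)\ra\,dt=0$ for all $\phi\in W^{1,\infty}(\cI)$ and hence $\la f(t),\Psi(t)\ra=0$ a.e., and finally invokes the dense cross-section to get $\Psi=0$ a.e., so $\Gamma=0$. You instead give a direct, constructive approximation of the total set $\{\chi_J\otimes x\}$ by elements of $\cl D$, gluing cross-section representatives with piecewise-linear bumps. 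The two hypotheses play the same roles in both proofs ($M(\phi)$-invariance to localize in $t$, the cross-section to reach arbitrary vectors of $X$), but your version avoids the vector-valued duality theorem entirely, at the cost of more bookkeeping; the paper's version is shorter and sidesteps the partition/covering combinatorics. Both are legitimate.

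One step of your write-up does need repair: the order of quantifiers in ``refine the partition so that $\|\hat f_i(t)-x\|_X<2\epsilon$ on $J_i$.'' The functions $f_i$ are chosen only after the partition is fixed (one per subinterval, via the cross-section at $s_i$), and their moduli of continuity are not under your control; refining the partition produces new subintervals that require new $f$'s, whose continuity may force a further refinement, and so on. As written this is circular. The standard fix is a compactness argument: for each $s\in\overline{J}\cap\cI_0$ pick $f_s\in\cl D$ with $\|\hat f_s(s)-x\|_X<\epsilon$ and, by continuity of $\hat f_s$, a relatively open interval $U_s\ni s$ on which $\|\hat f_s(t)-x\|_X<2\epsilon$; extract a finite subcover of a compact subinterval $[a',b]\subset\cI_0$ exhausting $J$ up to small measure, and take the partition subordinate to that cover. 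With this modification the rest of your estimate (the $(b-a)(2\epsilon)^p$ bulk term plus the $2N\eta$ transition term, with $\eta$ chosen after $N$) goes through, and your observation that $W^{1,\infty}(\cI)$ imposes no bound on the slopes of the bumps is exactly the point that makes the construction admissible.
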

%
\begin{proof}
Let $\Gamma \in L^p(\cI,X)^*$ be a functional on $L^p(\cI,X)$ such
that
 \begin{align*}
  \la f, \Gamma\ra=0, ~~f\in\cl D.
 \end{align*}
Now we use the characterisation of the dual space $L^p(\cI,X)^*$ given by Proposition
\ref{CharaterizationLpdual}. Then there is a $w^*$-measurable function $\Psi:\cI\rightarrow X^*$ such that
\begin{align*}
 0=\la f, \Gamma \ra =\int_\cI \la f(t), \Psi(t)\ra~ dt, {\rm ~for~} f \in\cl D.
\end{align*}
By virtue of $M(\phi)\cl D\subset\cl D$ for $\phi\in W^{1,\infty}(\cI)$, it follows that
\begin{align*}
 0=\int_\cI \la \phi(t)f(t), \Psi(t)\ra~ dt=\int_\cI \phi(t)\la f(t), \Psi(t)\ra~ dt \ ,
\end{align*}
i.e., the function $t\mapsto \la f(t), \Psi(t)\ra$ is in $L^1(\cI)$. Since
$\phi\in W^{1,\infty}(\cI)$ is arbitrary, we conclude that
\begin{align*}
 0=\la f(t), \Psi(t)\ra, \rm{~for~ a.e.~}t\in \cI \ .
\end{align*}
Then the condition that $\cl D$ has a dense cross-section implies $\Psi(t)=0$ for
a.e. $t\in \cI$ and hence $\Gamma=0$.
\end{proof}

Now, let $\{C(t)\}_{t\in \cI}$ be a family of linear operators in the Banach space $X$.
We note that the domains of operators $\{C(t)\}_{t\in \cI}$ may depend on the parameter $t$.
The multiplication operator $\cl C$ in $L^p(\cI, X)$ induced by $\{C(t)\}_{t\in \cI}$ is defined by
\begin{equation}\label{Mult-OperatorLp}
\begin{split}
 (\cl{C}f)(t)&:=C(t)f(t) \ , \ \ {\rm{with \ domain}} \\
 \dom(\cl{C})&:=\left\{
f\in L^p(\cI,X)~:
\begin{matrix}
&f(t)\in\dom(C(t)) {\rm ~for~ a.e.~} t\in \cI\\
&\cI \ni t\mapsto C(t)f(t)\in L^p(\cI,X)
\end{matrix}
\right\}.
\end{split}
\end{equation}
%
\begin{prop}\label{ClosenesInducedOperatorLp}
 If $\{C(t)\}_{t\in \cI}$ is a family of closed linear operators in
 $X$, then the induced operator $\cl{C}$ is also closed.
\end{prop}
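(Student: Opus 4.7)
The plan is to verify closedness directly from the definition: take a sequence $(f_n) \subset \dom(\cl C)$ with $f_n \to f$ in $L^p(\cI,X)$ and $\cl C f_n \to g$ in $L^p(\cI,X)$, and show that $f \in \dom(\cl C)$ with $\cl C f = g$. The key is to reduce the $L^p$-convergence to pointwise a.e.\ convergence so that the closedness of each individual $C(t)$ can be invoked fibrewise.

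First I would pass to a subsequence. By the standard fact that $L^p$-convergence implies a.e.\ convergence along a subsequence (valid also for Bochner-Lebesgue spaces, via the usual Chebyshev-type estimate picking $n_k$ with $\|f_{n_k} - f\|_{L^p(\cI,X)}^p < 2^{-k}$ and applying the Borel-Cantelli lemma to the sets where the pointwise norm exceeds a threshold), extract $(f_{n_k})$ with $f_{n_k}(t) \to f(t)$ in $X$ for a.e.\ $t \in \cI$. Applying the same procedure to the convergent sequence $\cl C f_{n_k} \to g$ in $L^p(\cI,X)$, I would then extract a further subsequence, still denoted $(f_{n_k})$ for brevity, along which also $(\cl C f_{n_k})(t) = C(t) f_{n_k}(t) \to g(t)$ in $X$ for a.e.\ $t \in \cI$.

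Next, fix $t$ in the full-measure set where both pointwise convergences hold. Since $f_{n_k} \in \dom(\cl C)$, one has $f_{n_k}(t) \in \dom(C(t))$ for a.e.\ $t$; combined with $f_{n_k}(t) \to f(t)$ and $C(t) f_{n_k}(t) \to g(t)$, the closedness of $C(t)$ gives $f(t) \in \dom(C(t))$ and $C(t) f(t) = g(t)$. Since $g \in L^p(\cI,X)$ by assumption, the map $t \mapsto C(t) f(t) = g(t)$ automatically belongs to $L^p(\cI,X)$, so $f$ satisfies both requirements in the domain description \eqref{Mult-OperatorLp}. Hence $f \in \dom(\cl C)$ and $\cl C f = g$, which is exactly what closedness demands.

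The only mildly technical point is ensuring the a.e.\ subsequence extraction in the vector-valued setting, but this is entirely standard for Bochner $L^p$-spaces over a $\sigma$-finite measure space since the scalar function $t \mapsto \|f_{n_k}(t) - f(t)\|_X$ is measurable and its $L^p(\cI)$-norm tends to zero; everything else is a transparent fibrewise application of closedness of $C(t)$. No additional hypotheses (such as measurability of $t \mapsto C(t)$ in any operator topology, or density of the domain) are needed for this argument.
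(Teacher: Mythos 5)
Your proposal is correct and follows essentially the same route as the paper: pass to a subsequence along which both $f_{n_k}(t)\to f(t)$ and $C(t)f_{n_k}(t)\to g(t)$ converge a.e., then apply closedness of each $C(t)$ fibrewise and note $g\in L^p(\cI,X)$ to conclude $f\in\dom(\cl C)$ and $\cl C f=g$. The paper merely says ``by a diagonal procedure'' where you spell out the Chebyshev/Borel--Cantelli extraction, so your write-up is, if anything, slightly more detailed.
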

%
\begin{proof}
Let the sequence $\{f_n\}_{n\geq 1}\subset \dom(\cl{C})$  be such that limits:
$\lim_{n\rightarrow\infty} f_n = f$ and $\lim_{n\rightarrow\infty} {\cl C} f_n = g$,
exist in the $L^p(\cI, X)$-topology. This implies that by a diagonal procedure one can find a subsequence
$\{f_{n_k}\}_{k\geq 1}$ such that $\lim_{{n_k}\rightarrow\infty}  f_{n_k}(t) = f(t)$ and
$\lim_{{n_k}\rightarrow\infty} ({\cl C} f_{n_k})(t) = g(t)$ for a.e. $t\in \cI$.
Since for any $t\in \cI$ the operator $C(t)$ is closed in $X$, we conclude that
$f(t)\in\dom(C(t))$ and $ C(t) f(t) = g(t)$ for almost all $t\in \cI$. On the other hand, since $g\in
L^p(\cI, X)$, it follows that $f\in\dom(\cl C)$ and that $(\cl C f)(t) = g (t)$ almost
everywhere in $\cI$. The latter proves that operator $\cl C$ is closed.
\end{proof}

For a family of generators, we have the following theorem.
%
\begin{thm}\label{InducedGeneratorLp}
 Let $\{C(t)\}_{t\in \cI}$ be a family of generators in $X$ such
 that for almost all $t\in \cI$ it holds that $C(t)\in \cl
 G(M,\beta)$ for some $M\geq 1$ and $\beta\in \mathbb{R}$.
 If the function $\cI\ni t\mapsto (C(t)+\xi)^{-1}x\in X$ is strongly
 measurable for $\xi > \beta$, $x\in X$, then the induced
 multiplication operator $\mathcal{C}$ is a generator in $L^p(\cI, X)$, $p \in [1,\infty)$,
 and the corresponding semigroup $\{e^{- \tau \cl{C}}\}_{\tau \geq 0}$ is given by
 \begin{align*}
  (e^{-\tau \cl{C}}f)(t)=e^{-\tau C(t)}f(t) {\rm ~~for~ a.e.~} t\in \cI.
 \end{align*}
In particular, on obtains that $\cC \in \cG(M,\gb)$.
\end{thm}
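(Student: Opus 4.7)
My plan is to construct the candidate semigroup $\{S(\tau)\}_{\tau \geq 0}$ on $L^p(\cI, X)$ pointwise by
\[
(S(\tau) f)(t) := e^{-\tau C(t)} f(t), \quad \tau \geq 0, \ f \in L^p(\cI, X),
\]
verify that it is a $C_0$-semigroup of class $\cG(M, \beta)$, and then identify its generator with $\cC$ by matching resolvents. The first sub-step, and the principal analytic obstacle, is to show that $t \mapsto e^{-\tau C(t)} f(t)$ is strongly measurable, since the semigroup itself is not assumed a priori to depend measurably on $t$. For this I would invoke the Hille exponential formula
\[
e^{-\tau C(t)} x = \lim_{n \to \infty} \bigl(n/\tau\bigr)^n (C(t) + n/\tau)^{-n} x, \quad x \in X,
\]
valid once $n/\tau > \beta$, and combine it with the hypothesised strong measurability of $t \mapsto (C(t) + \xi)^{-1} x$, promoted from constant $x$ to a measurable $x = f(t)$ by approximation with simple functions and iterated to $n$-fold resolvents. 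The uniform bound $\|e^{-\tau C(t)}\|_{\cB(X)} \leq M e^{\beta \tau}$ then yields $\|S(\tau) f\|_{L^p} \leq M e^{\beta \tau} \|f\|_{L^p}$, so $S(\tau)$ is a bounded operator on $L^p(\cI, X)$.

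Next I would check the algebraic semigroup law $S(\tau_1 + \tau_2) = S(\tau_1) S(\tau_2)$ and $S(0) = I$, both of which hold pointwise a.e.\ from the semigroup property of each fibre $\{e^{-\tau C(t)}\}_{\tau \geq 0}$. Strong continuity $\lim_{\tau \searrow 0} \|S(\tau) f - f\|_{L^p} = 0$ follows by dominated convergence: the pointwise limit is the strong continuity of each individual $\{e^{-\tau C(t)}\}_{\tau \geq 0}$ at $\tau = 0$, and the integrand is dominated by $(1 + M e^{\beta \tau_0})^p \|f(t)\|_X^p \in L^1(\cI)$ uniformly for $\tau \in [0, \tau_0]$. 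Hence $\{S(\tau)\}_{\tau \geq 0}$ is a $C_0$-semigroup on $L^p(\cI, X)$ in the class $\cG(M, \beta)$, and in particular its generator is automatically closed and densely defined.

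To match this generator with the multiplication operator $\cC$, I would compute its resolvent as the Laplace transform of $S$. For $\xi > \beta$ and $f \in L^p(\cI, X)$, Fubini (justified by the $L^p$-estimate and $\xi > \beta$) gives, for a.e.\ $t \in \cI$,
\[
\left(\int_0^\infty e^{-\xi \tau} S(\tau) f \, d\tau\right)(t) = \int_0^\infty e^{-\xi \tau} e^{-\tau C(t)} f(t) \, d\tau = (C(t) + \xi)^{-1} f(t).
\]
On the other hand, a direct check from \eqref{Mult-OperatorLp} shows that the map $f \mapsto g$ with $g(t) := (C(t) + \xi)^{-1} f(t)$ lands in $\dom(\cC)$ with $(\cC + \xi) g = f$, and every element of $\dom(\cC)$ arises in this way; hence multiplication by $(C(t)+\xi)^{-1}$ \emph{is} $(\cC + \xi)^{-1}$. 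Since a $C_0$-semigroup is determined by its generator and the generator is determined by the resolvent at a single point $\xi > \beta$, I conclude that $\cC$ generates $\{S(\tau)\}_{\tau \geq 0}$, which simultaneously yields the claimed formula for $e^{-\tau \cC}$ and the class membership $\cC \in \cG(M, \beta)$. All steps beyond the initial measurability transfer are standard orchestration of the Hille–Yosida machinery, and the argument is uniform in $p \in [1,\infty)$.
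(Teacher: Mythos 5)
Your argument is correct, but it runs in the opposite direction from the paper's. The paper works \emph{operator-first}: it verifies the Hille--Yosida hypotheses for $\cl C$ directly --- proving density of $\dom(\cl C)$ by hand via the Yosida-type elements $f_{\cl J,\xi}=\xi(C(\cdot)+\xi)^{-1}\chi_{\cl J}(\cdot)x$, and transferring the fibrewise iterated-resolvent bounds $\|(C(t)+\lambda)^{-k}\|\le M/(\lambda-\beta)^k$ to $\cl C$ --- and only afterwards identifies the semigroup pointwise through the Euler limit $e^{-\tau\cl C}f=\lim_n(I+\tfrac{\tau}{n}\cl C)^{-n}f$. You work \emph{semigroup-first}: you assemble $(S(\tau)f)(t)=e^{-\tau C(t)}f(t)$ directly (paying for this with the measurability argument via the Hille exponential formula and simple-function approximation, and with an explicit strong-continuity check by dominated convergence), and then identify the generator by computing the Laplace transform and matching it with the multiplication operator by $(C(t)+\xi)^{-1}$, which you correctly recognise as $(\cl C+\xi)^{-1}$. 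Each route buys something: yours gets density of $\dom(\cl C)$ for free from general $C_0$-semigroup theory once the generator is identified, while the paper's avoids having to establish joint measurability of $(\tau,t)\mapsto e^{-\tau C(t)}f(t)$ and the Fubini interchange in the Laplace transform, since it only ever manipulates resolvents until the final Euler-limit step. Both identifications (resolvent at a single $\xi>\beta$, versus the Euler formula) are legitimate ways to close the loop, and both yield $\cl C\in\cG(M,\beta)$; your write-up is slightly brisk on the Fubini/pointwise-evaluation step for the Bochner integral, but that is standard and not a gap.
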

%
\begin{proof}
Let $\cl J\subset\cI$ be a Borel set with characteristic function $\chi_\cl J(\cdot)$.
For $\xi>\beta$ and $x\in X$ we define the mappping
\begin{align*}
f_{\cl J,\xi}:= \xi(C(\cdot)+\xi)^{-1}\chi_\cl J(\cdot)x:\cI\rightarrow X.
\end{align*}
Then, by definition $f_{\cl J,\xi}$ is element of $L^p(\cI,X)$ , $f_{\cl J,\xi}(t)\in\dom(C(t))$ for a.e.
$t\in \cI$ and
\begin{align*}
 C(t)f_{\cl J,\xi}(t)= \xi \, \chi_\cl J(t)\, x - \xi^2 \, (C(t)+\xi)^{-1}\chi_\cl J(t)\, x
\end{align*}
is also an element of $L^p(\cI,X)$. Hence, $f_{\cl J,\xi}\in\dom(\cl C)$. Since for a.e. $t\in \cI$
the operator $C(t)$ is a generator in $X$ the Yosida approximation argument yields that
\begin{align*}
 f_{\cl J,\xi}(t)\rightarrow \chi_\cl J(t)x, {\rm~for~}\xi\rightarrow \infty, ~x\in X, {\rm ~a.e.~}t\in \cI.
\end{align*}
Note that it is valid for any $\cl J\subset\cI$. Therefore, $\dom(\cl C)$ is dense in $L^p(\cI,X)$.

Now, we estimate the iterated resolvents. Recall that for any $t\in \cI$ the operators $C(t)$ belong to
the same class $\cl G(M,\beta)$. Thus, for any $k\in \mathbb{N}$ we have
\begin{align*}
 \|(C(t)+\lambda)^{-k}\|_{\cl B(X)}\leq \frac{M}{(\lambda-\beta)^k}, ~~\lambda>\beta.
\end{align*}
Hence, for almost every $t\in \cI$ and any $f\in L^p(\cI,X)$, we obtain
\begin{align*}
 \|((\cl C+\lambda)^{-k}f)(t)\|_X=\|(C(t)+\lambda)^{-k}(f(t))\|_X\leq
 \frac{M}{(\lambda-\beta)^k}\|f(t)\|_X, ~~\lambda>\beta.
\end{align*}
This implies that
\begin{align*}
  \|(\cl C+\lambda)^{-k}f\|_{L^p}\leq\frac{M}{|\lambda-\beta|^k}\|f\|_{L^p}, ~~\lambda>\beta,
\end{align*}
and therefore, by the Hille-Yosida Theorem (see e.g. \cite[Theorem 2.3.8]{EngNag2000}) it follows that
$\cl C$ is a generator in $L^p(\cI,X)$. The corresponding semigroup is given by the Euler limit:
\begin{align*}
 e^{-\tau\cl C}f = \lim_{n\rightarrow \infty} \left(I+\frac{\tau}{n}\cl C \right)^{-n}f \ ,
\ f \in L^p(\cI,X) \ .
\end{align*}
For any $n\geq0$, we have
\begin{align*}
 \left(\left(I+\frac{\tau}{n}\cl C \right)^{-n}f\right)(t) = \left(I+\frac{\tau}{n}C(t) \right)^{-n}f(t) \ .
\end{align*}
This yields
\begin{align*}
 (e^{-\tau\cl C}f)(t) &= \lim_{n\rightarrow \infty}\left(\left(I+\frac{\tau}{n}\cl C \right)^{-n}f\right)(t)
 = \lim_{n\rightarrow \infty} \left(I+\frac{\tau}{n}C(t) \right)^{-n}f(t) = e^{-\tau C(t)}f(t) \ ,
\end{align*}
which coincides with expression claimed in theorem.
\end{proof}
%
\begin{rem}
 We note that the domain of the generator $\cl C$ does not necessarily have a dense cross-section in $X$
 since its elements might be not continuous.
\end{rem}

An operator $A$ in $X$, that does not depend on the time-parameter $t$, trivially induces a multiplication
operator $\cl A$ in $L^p(\cI, X)$ given by
\begin{align*}
 (\cl{A}f)(t):=Af(t) ~~{\rm for~ a.e.~}t\in \cI
\end{align*}
with
\begin{align*}
 \dom(\cl{A}):=
\left\{f\in L^p(\cI, X):
\begin{matrix}
&f(t)\in\dom(A) ~{\rm for~ a.e.~}t\in \cI\\
&\cI \ni t\mapsto Af(t)\in L^p(\cI, X)
\end{matrix}
\right\}.
\end{align*}
Then Theorem \ref{InducedGeneratorLp} immediately yields the following corollary:
%
\begin{cor}\label{InducedConstantGeneratorLp}
 Let $A$ be a generator in $X$. Then the induced multiplication operator $\mathcal{A}$ is a generator
 in $L^p(\cI,X)$ and its semigroup is given by
 \begin{align*}
  (e^{-\tau \cl{A}}f)(t)=e^{-\tau A}f(t), {\rm ~~a.e.~}t\in \cI.
 \end{align*}
\end{cor}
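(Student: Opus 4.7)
The plan is to deduce Corollary \ref{InducedConstantGeneratorLp} as a direct specialisation of Theorem \ref{InducedGeneratorLp} to the constant family $C(t) \equiv A$ on $\cI$. Since $A$ is a generator in $X$, by the Hille-Yosida theorem there exist constants $M_A \geq 1$ and $\gb_A \in \mathbb{R}$ such that $A \in \cG(M_A, \gb_A)$. Setting $C(t) := A$ for all $t \in \cI$, we obtain a family of generators in $X$ all belonging to the same class $\cG(M_A, \gb_A)$, so the first hypothesis of Theorem \ref{InducedGeneratorLp} is fulfilled with $M = M_A$ and $\gb = \gb_A$.

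Next, I would verify the measurability hypothesis. For any $\xi > \gb_A$ and any $x \in X$, the map
\begin{align*}
\cI \ni t \mapsto (C(t) + \xi)^{-1} x = (A + \xi)^{-1} x \in X
\end{align*}
is constant in $t$, hence trivially strongly measurable. Therefore all assumptions of Theorem \ref{InducedGeneratorLp} are met, and the associated multiplication operator defined by \eqref{Mult-OperatorLp} for the constant family is a generator in $L^p(\cI, X)$; this multiplication operator coincides with $\cl A$ as defined above the corollary. Moreover, the theorem provides the explicit semigroup representation, which in our case reads
\begin{align*}
(e^{-\tau \cl A} f)(t) = e^{-\tau C(t)} f(t) = e^{-\tau A} f(t) \quad \text{for a.e.\ } t \in \cI,
\end{align*}
yielding the claimed formula.

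There is essentially no obstacle here: the only subtlety is checking that the definition of the induced operator $\cl A$ in the statement of the corollary agrees with the induced operator constructed from the constant family $C(t) \equiv A$ in \eqref{Mult-OperatorLp}, which is immediate from inspection of the domains. Consequently the corollary is nothing more than the constant-coefficient instance of Theorem \ref{InducedGeneratorLp}, and no further work is required.
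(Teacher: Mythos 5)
Your proposal is correct and is exactly the paper's route: the paper states that Theorem \ref{InducedGeneratorLp} ``immediately yields'' the corollary, i.e.\ one applies that theorem to the constant family $C(t)\equiv A$, whose members all lie in a single class $\cG(M_A,\gb_A)$ and whose resolvent map $t\mapsto (A+\xi)^{-1}x$ is constant, hence strongly measurable. You have simply made explicit the verifications the paper leaves implicit; nothing further is needed.
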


The next lemma describes how domains of {two} induced multiplication operators in $L^p(\cI, X)$
can be described by domains of the corresponding operators in the space $X$.
\begin{lem}\label{DomainsInclusionsLp}
Let the assumptions (A1) and (A2) be satisfied. If for each $x \in \dom(A)$
\begin{align}\label{a:2.2}
\esssup_{t\in\cI}\|B(t)x\|_X\leq C_0 \|Ax\|_X<\infty \quad \mbox{for} \quad x\in \dom(A) \ ,
\end{align}
is valid, then $\dom(\cl A)\subset\dom(\cl B)$.
\end{lem}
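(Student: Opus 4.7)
The plan is to take an arbitrary $f \in \dom(\cl A)$ and verify the two requirements in the definition \eqref{Mult-OperatorLp} of $\dom(\cl B)$: that $f(t) \in \dom(B(t))$ for a.e.\ $t \in \cI$, and that $t \mapsto B(t)f(t)$ defines an element of $L^p(\cI, X)$. The first requirement is immediate from (A2), since $f(t) \in \dom(A) \subset \dom(B(t))$ for a.e.\ $t$. The pointwise norm bound $\|B(t)f(t)\|_X \le C_0 \|Af(t)\|_X$ together with $\cl A f \in L^p(\cI,X)$ will then give the $L^p$-bound, once measurability of $t \mapsto B(t)f(t)$ is established. So the essential content of the proof is the strong measurability of this function.

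To obtain measurability, I would exploit assumption (A1), which gives $0 \in \varrho(A)$ and hence a bounded inverse $A^{-1}$ on $X$. Writing $g := \cl A f \in L^p(\cI,X)$, we have $f = A^{-1} g$ in the $L^p$-sense. I would approximate $g$ in $L^p(\cI,X)$ by a sequence $\{g_n\}$ of $X$-valued simple functions, $g_n = \sum_k y_k^{(n)} \chi_{J_k^{(n)}}$, and set $f_n := A^{-1} g_n = \sum_k (A^{-1} y_k^{(n)}) \chi_{J_k^{(n)}}$. Then each $f_n$ is a simple function with values in $\dom(A)$, and by (A2) each map $t \mapsto B(t)(A^{-1}y_k^{(n)})$ is strongly measurable, so $t \mapsto B(t) f_n(t)$ is strongly measurable as a finite linear combination of measurable functions.

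Next I would apply the hypothesis \eqref{a:2.2} to each difference $f_n - f_m$ (and subsequently to $f_n - f$), yielding
\begin{equation*}
\|B(t)f_n(t) - B(t)f_m(t)\|_X \le C_0 \|A f_n(t) - A f_m(t)\|_X \qquad \text{a.e.\ } t \in \cI.
\end{equation*}
Since $A f_n = g_n \to g = \cl A f$ in $L^p(\cI,X)$, this shows $\{B(\cdot)f_n(\cdot)\}$ is Cauchy in $L^p(\cI,X)$. Passing to a subsequence along which $g_n(t) \to g(t)$ and hence $f_n(t) \to f(t)$ pointwise a.e., the same estimate applied to $f_n - f$ forces $B(t) f_n(t) \to B(t) f(t)$ for a.e.\ $t$. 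Thus $t \mapsto B(t)f(t)$ is an a.e.\ pointwise limit of strongly measurable functions, hence strongly measurable. The pointwise bound \eqref{a:2.2} then yields $\|B(\cdot)f(\cdot)\|_{L^p} \le C_0 \|\cl A f\|_{L^p}$, so $f \in \dom(\cl B)$.

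The only real obstacle is the measurability step, since (A2) provides measurability of $t \mapsto B(t)x$ only for \emph{fixed} $x \in \dom(A)$, whereas we need measurability of $t \mapsto B(t)f(t)$ with a varying argument. The approximation scheme above converts fixed-argument measurability into varying-argument measurability; its validity hinges crucially on having $0 \in \varrho(A)$ in (A1), which lets us parametrise elements of $\dom(A)$ by arbitrary $X$-valued data through $A^{-1}$, and on the relative bound \eqref{a:2.2}, which transfers $L^p$-convergence from $\cl A f_n$ to $\cl B f_n$.
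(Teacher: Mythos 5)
Your proof is correct in substance and is in fact more complete than the paper's own argument, which is a two-line computation: from \eqref{a:2.2} the authors first deduce the \emph{uniform} operator-norm bound $\esssup_{t\in\cI}\|B(t)A^{-1}\|_{\cl B(X)}\leq C_0$ (implicitly using separability of $X$ and the closed graph theorem, since $B(t)A^{-1}$ is closed and everywhere defined), then write $\|B(t)f(t)\|_X=\|B(t)A^{-1}Af(t)\|_X\leq C_0\|Af(t)\|_X$ and conclude $f\in\dom(\cl B)$; strong measurability of $t\mapsto B(t)f(t)$ is left tacit. You correctly identify measurability as the only nontrivial point and supply it by approximating $\cl Af$ with simple functions --- a genuinely different emphasis that buys a self-contained argument. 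One small caveat in your write-up: hypothesis \eqref{a:2.2} holds for each \emph{fixed} $x\in\dom(A)$ outside an $x$-dependent null set, so applying it to $x=f_n(t)-f(t)$, which ranges over uncountably many values as $t$ varies, is not literally licensed in your final convergence step (it is fine for $f_n-f_m$, which is simple). This is easily repaired either by first upgrading \eqref{a:2.2} to the uniform bound $\esssup_{t\in\cI}\|B(t)A^{-1}\|_{\cl B(X)}\le C_0$ as the paper does (countable dense subset of $X$ plus closed graph theorem), or by invoking closedness of $B(t)$ from (A2): along a subsequence where $f_n(t)\to f(t)$ and $B(t)f_n(t)$ converges a.e., closedness gives $f(t)\in\dom(B(t))$ and identifies the limit as $B(t)f(t)$. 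With either repair your argument is complete.
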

%
\begin{proof}
Let $f\in\dom(\cl A)$. Then, by definition of $\dom(\cl A)$ one gets $f(t)\in \dom(A)$ for a.e.
$t\in \cI$ and hence $f(t)\in \dom(B(t))$ for a.e. $t\in \cI$. Consequently, by virtue of \eqref{a:2.2}
we obtain
\begin{align*}
 \esssup_{t\in\cI}\|B(t)A^{-1}\|_{\cl B(X)}\leq C_0 \ .
\end{align*}
Hence, one gets
\begin{align*}
 \|B(t)f(t)\|_X = \|B(t)A^{-1}Af(t)\|_X \leq C_0\|Af(t)\|_X \ ,
\end{align*}
which yields that the function $t\mapsto B(t)f(t)$ is in $L^p(\cI, X)$. Thus, $f\in\dom(\cl B)$,
i.e. $\dom(\cl A)\subset\dom(\cl B)$.
\end{proof}

Note that a family $\{F(t)\}_{t\in\cI}$ of bounded operators is measurable if the map
$\cI \ni t \mapsto F(t)x \in X$ is measurable for each $x \in X$. The following
proposition is very useful for our purposes.
%
\begin{prop}[{\cite{Evans1976}}]\label{OperatorNormsBoundedOperator}
Let $\{F(t)\}_{t\in \cI}$ be a measurable family of bounded linear operators on $X$.
Then, for the induced multiplication operator $\cl F$ on $L^p(\cI,X)$ its norm
can be expressed as
 \begin{align*}
  \|\cl F\|_{\cl B(L^p(\cI,X))}= \esssup_{t\in\cI}\|F(t)\|_{\cl B(X)} \ .
 \end{align*}
\end{prop}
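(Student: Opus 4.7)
The plan is to establish the two inequalities separately, both leaning on the separability of $X$ to make $t\mapsto \|F(t)\|_{\cl B(X)}$ a measurable function. First I would fix a countable dense subset $\{x_n\}_{n\in\dN}$ of the closed unit ball of $X$. By the measurability hypothesis each map $t\mapsto F(t)x_n$ is strongly measurable, hence $t\mapsto \|F(t)x_n\|_X$ is measurable, and since
\begin{equation*}
\|F(t)\|_{\cl B(X)} = \sup_{n\in\dN}\|F(t)x_n\|_X
\end{equation*}
is a countable supremum of measurable functions, it is itself measurable. Consequently $M:=\esssup_{t\in\cI}\|F(t)\|_{\cl B(X)}\in[0,\infty]$ is well defined.

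For the upper bound $\|\cl F\|_{\cl B(L^p(\cI,X))}\le M$ (we may assume $M<\infty$, otherwise there is nothing to prove), I would simply observe that for every $f\in L^p(\cI,X)$ and a.e. $t\in\cI$,
\begin{equation*}
\|(\cl F f)(t)\|_X = \|F(t)f(t)\|_X \le \|F(t)\|_{\cl B(X)}\|f(t)\|_X \le M\,\|f(t)\|_X,
\end{equation*}
so raising to the $p$-th power and integrating over $\cI$ yields $\|\cl F f\|_{L^p}\le M\|f\|_{L^p}$.

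For the reverse inequality $\|\cl F\|_{\cl B(L^p(\cI,X))}\ge M$ I would argue by approximation from below. Pick any $c'<c<M$; the set $E:=\{t\in\cI:\|F(t)\|_{\cl B(X)}>c\}$ has positive Lebesgue measure, and the measurable sets $E_n:=\{t\in\cI:\|F(t)x_n\|_X>c'\}$ satisfy $E\subset\bigcup_{n\in\dN}E_n$, because for each $t\in E$ one has $\sup_n\|F(t)x_n\|_X = \|F(t)\|_{\cl B(X)}>c>c'$. Hence some $E_{n_0}$ has positive measure, and the test function $f(t):=\chi_{E_{n_0}}(t)\,x_{n_0}$ lies in $L^p(\cI,X)$ with $\|f\|_{L^p}^p = |E_{n_0}|\,\|x_{n_0}\|_X^p \le |E_{n_0}|$ and
\begin{equation*}
\|\cl F f\|_{L^p}^p = \int_{E_{n_0}}\|F(t)x_{n_0}\|_X^p\,dt \ge (c')^p\,|E_{n_0}|,
\end{equation*}
which gives $\|\cl F\|_{\cl B(L^p(\cI,X))} \ge c'$. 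Letting $c'\nearrow M$ finishes the argument, and the case $M=+\infty$ is handled identically by taking $c'$ arbitrarily large.

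The main obstacle is purely measure-theoretic: producing a \emph{measurable} test function $f$ whose image under $\cl F$ nearly attains the essential supremum of the pointwise operator norms. Without separability of $X$ one would have to invoke a genuine measurable selection theorem to pick unit vectors $x_t$ along which $F(t)$ nearly attains its norm; separability sidesteps this issue entirely by reducing the problem to countably many constant-valued simple functions built from the fixed dense sequence $\{x_n\}$.
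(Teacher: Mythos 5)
Your argument is correct. Note that the paper itself gives no proof of this proposition -- it is quoted from Evans \cite{Evans1976} -- so there is nothing to compare against except the standard argument, which is exactly what you reproduce: separability of $X$ gives measurability of $t\mapsto\|F(t)\|_{\cl B(X)}$ as a countable supremum, the inequality $\|\cl F\|\le M$ is the pointwise estimate integrated, and the reverse inequality is obtained by locating a single $x_{n_0}$ from the dense sequence and a positive-measure set $E_{n_0}$ on which $\|F(t)x_{n_0}\|_X>c'$, then testing against $\chi_{E_{n_0}}\otimes x_{n_0}$. Your covering step $E\subset\bigcup_n E_n$ is the right way to avoid a measurable selection theorem, and the computation $\|\cl Ff\|_{L^p}^p\ge (c')^p|E_{n_0}|\ge (c')^p\|f\|_{L^p}^p$ closes the argument (implicitly one takes $c'>0$, which excludes the degenerate case $x_{n_0}=0$, and the case $M=0$ is trivial). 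The only cosmetic remark: in the case $M=+\infty$ the conclusion should be read as saying that $\cl F$ is unbounded, since the test functions you construct need not lie in $\dom(\cl F)$ as defined in \eqref{Mult-OperatorLp}; but exhibiting $f$ with $\|f\|_{L^p}=|E_{n_0}|^{1/p}\|x_{n_0}\|_X$ and $\|F(\cdot)f(\cdot)\|_{L^p}\ge c'\|f\|_{L^p}$ for arbitrarily large $c'$ is precisely what rules out a bounded extension, so the statement still holds in the intended sense.
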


\section{Non-autonomous Cauchy problems and the evolution semigroups approach to solve them} \label{sec:3}

Let us consider the non-ACP \eqref{CauchyProblem} in the separable Banach space $X$. We are going to explain
an approach of solving it by using the \textit{evolution semigroups}.

\subsection{Evolution semigroup approach} \label{sec:3.1}

Crucial for this approach is the notion of the \textit{evolution pre-generator}.
%
\begin{defi}\label{evol-pre-gener}
An operator $\cl K$ in $L^p(\cI,X)$, $p \in [1,\infty)$, is called a evolution pre-generator if
 \item[\;\;\rm (i)] $\dom(\cl K)\subset C(\cI,X)$ and $M(\phi)\dom(\cl K)\subset\dom(\cl K)$ for
 $\phi\in W^{1,\infty}(\cI)$,

 \item[\;\;\rm (ii)] $\cl KM(\phi)f-M(\phi)\cl Kf=M(\dot{\phi})f, ~~f\in\dom(\cl K),
 ~~\phi\in W^{1,\infty}(\cI)$, where $\dot \phi=\partial_t\phi$,

 \item[\;\;\rm (iii)] the domain $\dom(\cl K)$ has a dense cross-section in $X$ (see Definition
 \ref{DefinitionDenseCrossSection}).\\[-2ex]

\noindent
If, in addition, the operator $\cl K$ is a generator of a semigroup in $L^p(\cI,X)$,
then $\cl K$ is called an \textit{evolution generator}.
\end{defi}
\begin{rem}
 The domain $\dom(\cl K)$ of an evolution pre-generator is dense in the Banach space $L^p(\cI,X)$.
Indeed, the dense cross-section property {\rm (iii)} together with {\rm (i)} and
Lemma  \ref{DensityDenseCrossSectionLp} imply the density of $\dom(\cl K)\subset L^p(\cI,X)$.
\end{rem}

Now, we can present the main idea concerning the solving of the problem \eqref{CauchyProblem}. The next
theorem explains why we are interested in such a notion as \textit{evolution semigroups}.
%
\begin{thm}[{\cite[Theorem 4.12]{Nei1981}}]\label{PropagatorEvolutionGeneratorCorrespondeceLpSetting}
 Between the set of all semigroups $\{e^{-\tau \cl K}\}_{\tau\geq0}$ on the Banach space $L^p(\cI,X)\}$,
 $p \in [1,\infty)$, generated by an evolution generator $\cl K$ and the set of all solution operators (propagators)
 $\{U(t,s)\}_{(t,s)\in\Delta}$ on the Banach space $X$ exists a one-to-one correspondence such that
 the relation
 \begin{align}\label{CorrespondenceEvolutionSemigroupPropagatorFormula}
  (e^{-\tau \cl K}f)(t)=U(t,t-\tau)\chi_\cI(t-\tau)f(t-\tau),
 \end{align}
  holds for $f\in L^p(\cI, X)$ and for a.e. $t\in \cI$.
\end{thm}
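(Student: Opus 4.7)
My plan is to establish the bijection by two independent constructions, propagator-to-semigroup and semigroup-to-propagator, and then check they are mutually inverse. Both directions hinge on a covariance identity between the semigroup action and the multiplication operators $M(\phi)$ for $\phi \in W^{1,\infty}(\cI)$.

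For the forward direction, given a propagator $\{U(t,s)\}_{(t,s)\in\gD}$, I define $(\cl U(\tau)f)(t):=U(t,t-\tau)\chi_\cI(t-\tau)f(t-\tau)$. Boundedness of each $\cl U(\tau)$ is immediate from $\|U\|_{\cl B(X)}<\infty$; the semigroup law $\cl U(\tau_1)\cl U(\tau_2)=\cl U(\tau_1+\tau_2)$ unfolds directly into the cocycle identity $U(t,r)U(r,s)=U(t,s)$; and strong continuity at $\tau=0$ follows from $L^p$-continuity of translations together with strong continuity of $U$ on the diagonal. To verify the evolution pre-generator properties of the generator $\cl K$, the key computation is
\begin{align*}
((M(\phi)\cl U(\tau)-\cl U(\tau)M(\phi))f)(t) = (\phi(t)-\phi(t-\tau))\,U(t,t-\tau)\chi_\cI(t-\tau)f(t-\tau).
\end{align*}
Dividing by $\tau$ and letting $\tau\to 0^+$ yields both $M(\phi)\dom(\cl K)\subset\dom(\cl K)$ and $\cl KM(\phi)f-M(\phi)\cl Kf = M(\dot\phi)f$, i.e.\ parts (i) and (ii) of Definition \ref{evol-pre-gener}. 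The regularity $\dom(\cl K)\subset C(\cI,X)$ in (i) is obtained from the integrated identity $f(t)=U(t,s)f(s)+\int_s^t U(t,r)(\cl Kf)(r)\,dr$, whose right-hand side is continuous in $(t,s)$. The dense cross-section property (iii) is produced by regularisation: vectors $\int_0^\epsilon \cl U(s)M(\phi)g\,ds$ lie in $\dom(\cl K)$ and can realise any prescribed $x\in X$ at any prescribed $t\in\cI_0$, up to arbitrarily small error.

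The backward direction is the substantive part. Given an evolution generator $\cl K$, iteration of the commutator relation (ii) yields the covariance
\begin{align*}
 \cl U(\tau)M(\phi) = M(\phi(\cdot-\tau)\chi_\cI(\cdot-\tau))\cl U(\tau),
\end{align*}
first for $\phi\in W^{1,\infty}(\cI)$ (differentiate $\tau\mapsto M(\phi(\cdot-\tau))e^{-\tau\cl K}f$ and apply the commutator to cancel the derivative), then for $\phi\in L^\infty(\cI)$ by density. Combined with the dense cross-section, this lets one define, for each $(t,s)\in\gD$, a linear map on a dense subset of $X$ by $U(t,s)x := \widehat{(\cl U(t-s)f)}(t)$, where $f\in\dom(\cl K)$ has continuous representative $\hat f$ with $\hat f(s)=x$. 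The main obstacle is well-definedness: if $\hat f_1(s)=\hat f_2(s)$, one must show $\widehat{(\cl U(t-s)(f_1-f_2))}(t)=0$. This is handled by applying the covariance with $\phi_h$ approximating $\chi_{[s-h,s+h]}$ and passing to $h\to 0$ via Lebesgue differentiation; Proposition \ref{OperatorNormsBoundedOperator} simultaneously provides the norm control needed to extend $U(t,s)$ to all of $X$ as a bounded operator. The cocycle identity and strong continuity of $\{U(t,s)\}$ then inherit directly from the corresponding properties of $\{\cl U(\tau)\}$, and the representation \eqref{CorrespondenceEvolutionSemigroupPropagatorFormula} holds by construction. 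Uniqueness of the semigroup generated by $\cl K$ shows that the two assignments are mutually inverse, completing the bijection.
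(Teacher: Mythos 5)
The paper does not prove this theorem: it is imported verbatim as Theorem~4.12 of \cite{Nei1981}, so there is no in-paper argument to compare yours against. Judged on its own, your outline reproduces the standard construction underlying that reference (and its descendants, e.g.\ \cite{NeiZag2009}): the forward direction is routine verification, and the backward direction rests on exactly the right two pillars, namely the covariance $e^{-\tau\cl K}M(\phi)=M(\phi(\cdot-\tau)\chi_\cI(\cdot-\tau))e^{-\tau\cl K}$ obtained by integrating the commutator relation (ii), and the dense cross-section (iii) used to define $U(t,s)$ pointwise; the well-definedness-plus-boundedness step via Lebesgue differentiation is also the right mechanism, and in fact it directly yields $\|U(t,s)x\|\le\sup_{\tau}\|e^{-\tau\cl K}\|\,\|x\|$ once one observes that elements of $\dom(\cl K)$, being continuous, have every point as a Lebesgue point. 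Two places where your sketch is thinnest and where the real labour sits: first, the covariance identity as you state it presupposes the support property $(e^{-\tau\cl K}f)(t)=0$ for $t<\tau$, which is not free --- one must first prove the covariance for $\phi$ compactly supported in $\cI$ and then deduce the nilpotency/support statement by exhausting $\cI$, rather than writing the $\chi_\cI(\cdot-\tau)$ cutoff into the formula from the outset; second, joint strong continuity of $(t,s)\mapsto U(t,s)$ on all of $\gD$ does not ``inherit directly'' from strong continuity of $\tau\mapsto e^{-\tau\cl K}$ --- one needs the quantitative pointwise bound $\|\hat f(t)\|_X\le M(\xi-\gb)^{-(p-1)/p}\|(\cl K+\xi)f\|_{L^p}$ (the Lemma~2.16 of \cite{Nei1981} that the paper quotes in the proof of Theorem~\ref{EvolutionProblemUniqueSolution}) to pass from graph-norm continuity of $\tau\mapsto e^{-\tau\cl K}f$ to uniform continuity of the pointwise evaluations. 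Neither is a wrong turn, but both deserve explicit treatment in a complete write-up.
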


In other words, there is a \textit{one-to-one} correspondence between evolution semigroups and the propagators
that solve the non-ACP problem \eqref{CauchyProblem}

One of the important example of evolution generator is $D_0:=\partial_t$ defined in the space $L^p(\cI,X)$
by
\begin{align*}
 D_0f(t):=\partial_t f(t), ~ \dom(D_0):=\{f\in W^{1,p}([0,T],X): f(0)=0\} \ .
\end{align*}
Then, the operator $D_0$ is a generator of class $\mathcal{G}(1,0)$ of the \textit{right-shift} evolution
semigroup $\{S(\tau)\}_{\tau\geq0}$ that has the form
\begin{align*}
 (e^{-\tau D_0}f)(t)=(S(\tau) f)(t):=f(t-\tau)\chi_\cI(t-\tau), ~~f\in L^p(\cI,X), {\rm ~~a.e.~} t\in \cI.
\end{align*}
The propagator corresponding to the right-shift evolution semigroup is the {identity propagator},
i.e. $U(t,s) = I$ for $(t,s) \in \Delta \in \cI_0\times \cI_0$, where $\cI_0 = \cI \setminus \{0\}$.

We note that the generator $D_0$ has empty spectrum since the semigroup $\{S(\tau)\}_{\tau\geq0}$ is
\textit{nilpotent} and therefore the integral $\int_0^\infty d\tau \, e^{-\tau\lambda}S(\tau)f$ exists for
any $\lambda\in \C$ and for any $f\in L^p(\cI,X)$.

For a given operator family $\{C(t)\}_{t\in\cI}$ in $X$ the induced multiplication operator
$\cl C$ in $L^p(\cI,X)$ is defined by (\ref{Mult-OperatorLp}). We consider in $L^p(\cI,X)$  the operator
\begin{align}\label{DefinitionKTilde}
 \widetilde {\cl K} := D_0+\cl C \, , ~~\dom(\widetilde {\cl K}) := \dom(D_0)\cap\dom(\cl C) \ .
\end{align}
%
\begin{lem}\label{DenseCrossSectionEvolutionOperatorC}
 If $\dom(\widetilde{\cl K})$ has a dense cross-section, then the operator $\widetilde{\cl K}$ is a
 evolution pre-generator.
\end{lem}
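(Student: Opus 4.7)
The plan is to verify the three items of Definition \ref{evol-pre-gener} for $\widetilde{\cl K}=D_0+\cl C$, given that condition (iii) (dense cross-section of $\dom(\widetilde{\cl K})$) is handed to us by hypothesis. The remaining work is the domain-invariance in (i) and the commutator identity in (ii); the algebra is elementary, but one has to be careful that the two individual domains $\dom(D_0)$ and $\dom(\cl C)$ are both preserved by the scalar multiplication $M(\phi)$.

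First, for condition (i), I would note that every $f\in\dom(\widetilde{\cl K})\subset \dom(D_0)\subset W^{1,p}(\cI,X)$ has a continuous representative by the Sobolev embedding $W^{1,p}(\cI,X)\hookrightarrow C(\cI,X)$ in one space dimension, so $\dom(\widetilde{\cl K})\subset C(\cI,X)$. To show $M(\phi)\dom(\widetilde{\cl K})\subset \dom(\widetilde{\cl K})$ for $\phi\in W^{1,\infty}(\cI)$, I check the two defining conditions separately. For $D_0$: the pointwise product $\phi f$ lies in $W^{1,p}(\cI,X)$ by the scalar-vector product rule and satisfies $(\phi f)(0)=\phi(0)\cdot 0 = 0$, hence $M(\phi)f\in\dom(D_0)$. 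For $\cl C$: since $\phi(t)$ is a scalar, $f(t)\in\dom(C(t))$ implies $\phi(t)f(t)\in\dom(C(t))$ with $C(t)(\phi(t)f(t))=\phi(t)C(t)f(t)$, and this function is in $L^p(\cI,X)$ because $\phi\in L^\infty(\cI)$ and $\cl C f\in L^p(\cI,X)$. Thus $M(\phi)f\in\dom(\cl C)$ as well.

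Second, for condition (ii), I would just apply the product rule on $\dom(\widetilde{\cl K})$: for $\phi\in W^{1,\infty}(\cI)$ and $f\in\dom(\widetilde{\cl K})$,
\begin{align*}
\widetilde{\cl K}(M(\phi)f)(t)
&= \partial_t(\phi(t)f(t)) + C(t)(\phi(t)f(t)) \\
&= \dot\phi(t)f(t) + \phi(t)\partial_t f(t) + \phi(t)C(t)f(t) \\
&= (M(\dot\phi)f)(t) + (M(\phi)\widetilde{\cl K}f)(t),
\end{align*}
for almost every $t\in\cI$, which is precisely the commutator identity in Definition \ref{evol-pre-gener}(ii).

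Condition (iii) is the assumption of the lemma, so nothing further is required. I do not expect a real obstacle here; the only delicate point is making sure that $M(\phi)$ preserves $\dom(D_0)$ and not merely $W^{1,p}(\cI,X)$, which is where the vanishing boundary condition $(\phi f)(0)=0$ comes in and uses that $f(0)=0$. Everything else is book-keeping with the product rule and the pointwise definitions of $D_0$ and $\cl C$.
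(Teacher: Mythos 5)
Your proposal is correct and follows essentially the same route as the paper: inclusion $\dom(\widetilde{\cl K})\subset\dom(D_0)\subset C(\cI,X)$ for (i), invariance of both $\dom(D_0)$ and $\dom(\cl C)$ under $M(\phi)$ (the paper notes that $\cl C$, being an induced multiplication operator, commutes with $M(\phi)$), and the Leibniz rule for $D_0 M(\phi)f$ to obtain the commutator identity (ii), with (iii) being the hypothesis. Your write-up merely makes explicit the boundary condition $(\phi f)(0)=0$ and the $L^\infty$-bound on $\phi$, which the paper leaves implicit.
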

%
\begin{proof}
By (\ref{DefinitionKTilde}) we get $\dom(\widetilde {\cl K})\subset\dom(D_0)\subset C(\cI,X)$. Since $\cl C$
is an induced multiplication operator, then by definition (\ref{Mult-OperatorLp})
it commutes with the operator $M(\phi)$ for $\phi\in W^{1,\infty}(\cI)$.
So, with $\dom(\widetilde{\cl K})=\dom(D_0)\cap\dom(\cl C)$ we get $M(\phi)\dom(\widetilde {\cl K})
\subset \dom(\widetilde{\cl  K})$.
Then the relation $\widetilde{\cl K}M(\phi)f-M(\phi)\widetilde{\cl K}f=M(\dot{\phi})f$ for
$f\in\dom(\widetilde{\cl K})$ (see Definition \ref{evol-pre-gener}, {\rm (ii)}) follows by the Leibniz rule
for $(D_0M(\phi)f)(t)=\partial_t (\phi f)(t)$.
\end{proof}
Now, we precise the notion of the \textit{solution operator} of the problem \eqref{CauchyProblem} versus
the \textit{propagator} $\{U(t,s)\}_{(t,s)\in\Delta}$ on the Banach space $X$ that we first described in
Introduction \S\ref{sec:1}.

\begin{defi}\label{SolutionDefinition}~
\item[\;\;\rm (i)] The evolution non-ACP \eqref{CauchyProblem} is called \textit{correctly posed} in
$\cI_0 = \cI \setminus \{0\}$  if $\widetilde{\cl K}$ defined by (\ref{DefinitionKTilde}) is an evolution
pre-generator.
	
\item[\;\;\rm (ii)] A propagator $\{U(t,s)\}_{(t,s)\in\Delta}$ is called a \textit{solution operator} of
  the correctly posed evolution problem \eqref{CauchyProblem} if the corresponding evolution generator
  $\cl K$ (Theorem \ref{PropagatorEvolutionGeneratorCorrespondeceLpSetting}) is an operator extension of
  $\widetilde{\cl K}$, i.e. $\widetilde{\cl K} \subseteq \cl K$.
	
\item[\;\;\rm (iii)] The evolution problem \eqref{CauchyProblem} has a unique solution operator if
  $\widetilde {\cl K}$ admits only one extension that is an evolution generator.
\end{defi}
\begin{rem}~
\item[\;\;(i)] It is an open problem whether an evolution pre-generator admits several extension which are evolition generators. However, if this is case then the non-ACP \eqref{CauchyProblem} has more than one solution operator.

\item[\;\;(ii)]
Our Definition \ref{SolutionDefinition} (ii) of a correctly posed non-ACP is a \textit{weak} property.
For example, the notion of well-posedness developed in \cite{Nickel1996} implies this property. %
\end{rem}

To find extensions of the evolution pre-generator $\widetilde{\cl K}$ which are evolution generators is, in
general, a nontrivial problem. However, there is a special case, that easily guarantees the existence of such
extension and, moreover, it is unique.
%
\begin{thm}\label{EvolutionProblemUniqueSolution}
Assume that the non-ACP \eqref{CauchyProblem} is correctly posed in $\cI_0$. If the
evolution pre-generator $\widetilde {\cl K}$ is closable in $L^p(\cI,X)$ and its closure $\cl K$ is a
generator, then the evolution problem \eqref{CauchyProblem} has a unique solution operator.
\end{thm}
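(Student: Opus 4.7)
The plan is to prove that the closure $\cl K := \overline{\widetilde{\cl K}}$ is itself an evolution generator in the sense of Definition~\ref{evol-pre-gener}; existence of a solution operator then follows immediately from the correspondence in Theorem~\ref{PropagatorEvolutionGeneratorCorrespondeceLpSetting}, and uniqueness will be derived from the classical fact that a generator of a strongly continuous semigroup on a Banach space admits no proper generator extension.

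\textbf{Inheritance of the pre-generator properties.} Since $\cl K$ is a generator of a semigroup by hypothesis, it suffices to verify items (i)--(iii) of Definition~\ref{evol-pre-gener}. Fix $\phi \in W^{1,\infty}(\cI)$ and $f \in \dom(\cl K)$, and approximate $f$ by a sequence $\{f_n\} \subset \dom(\widetilde{\cl K})$ with $f_n \to f$ and $\widetilde{\cl K} f_n \to \cl K f$ in $L^p(\cI, X)$. Boundedness of $M(\phi)$ and $M(\dot\phi)$ on $L^p(\cI, X)$ together with the pre-generator identity
\[
 \widetilde{\cl K}\, M(\phi) f_n \;=\; M(\phi)\,\widetilde{\cl K} f_n + M(\dot\phi)\, f_n
\]
and closedness of $\cl K$ give $M(\phi) f \in \dom(\cl K)$ along with the commutator relation of Definition~\ref{evol-pre-gener}(ii). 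The dense cross-section property (iii) is inherited verbatim from $\widetilde{\cl K}$ via the inclusion $\dom(\widetilde{\cl K}) \subset \dom(\cl K)$.

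\textbf{Continuity of representatives.} The remaining inclusion $\dom(\cl K) \subset C(\cI, X)$ in (i) is the delicate point, since continuity of representatives is not formally preserved under $L^p$-closure. I would extract it from the commutator relation already established: exponentiation yields the intertwining
\[
 e^{-\tau \cl K}\, M(\phi) \;=\; M(T(\tau)\phi)\, e^{-\tau \cl K}
\]
on $L^p(\cI,X)$, where $T(\tau)$ is the right-shift acting on scalar-valued functions. This is the defining structural feature of an evolution semigroup. For $f\in\dom(\cl K)$ and $\lambda$ in the resolvent set, the Laplace-transform representation $f = \int_0^\infty e^{-\lambda \tau}\, e^{-\tau\cl K}(\lambda - \cl K)f \, d\tau$ combined with the shift intertwining produces a continuous representative of $f$, thereby completing the identification of $\cl K$ as an evolution generator.

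\textbf{Existence and uniqueness.} Theorem~\ref{PropagatorEvolutionGeneratorCorrespondeceLpSetting} now associates with $\cl K$ a propagator $\{U(t,s)\}_{(t,s)\in\Delta}$ via formula \eqref{CorrespondenceEvolutionSemigroupPropagatorFormula}, and since $\widetilde{\cl K} \subseteq \cl K$, Definition~\ref{SolutionDefinition}(ii) identifies this propagator as a solution operator. For uniqueness, let $\cl K'$ be any other evolution generator extending $\widetilde{\cl K}$. As a semigroup generator, $\cl K'$ is closed, hence $\cl K' \supseteq \overline{\widetilde{\cl K}} = \cl K$. For $\lambda$ in the resolvent sets of both operators, surjectivity of $\lambda - \cl K : \dom(\cl K) \to L^p(\cI,X)$ and injectivity of $\lambda - \cl K'$ on $\dom(\cl K')$ force $\dom(\cl K') = \dom(\cl K)$, whence $\cl K' = \cl K$. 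By the one-to-one correspondence of Theorem~\ref{PropagatorEvolutionGeneratorCorrespondeceLpSetting} the associated solution operator is therefore unique.

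\textbf{Main obstacle.} The technical heart is the continuity inclusion $\dom(\cl K) \subset C(\cI, X)$, which has to be recovered from the commutator/shift structure because $L^p$-closure does not preserve this property a priori; the remaining steps are standard closure bookkeeping and classical semigroup theory.
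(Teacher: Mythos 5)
Your closure bookkeeping for Definition~\ref{evol-pre-gener}(ii) and the inheritance of the dense cross-section property match the paper's argument, and your uniqueness step is a correct (and standard) variant: the paper instead differentiates $s\mapsto e^{-(\tau-s)\cl K'}e^{-s\cl K}f$ to show the two semigroups coincide, but your resolvent argument (common $\lambda$ in both resolvent sets exists because both generators are quasi-bounded, then surjectivity of $\lambda-\cl K$ plus injectivity of $\lambda-\cl K'$ forces equality of domains) is equally valid.

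The genuine gap is exactly where you locate it: the inclusion $\dom(\cl K)\subset C(\cI,X)$. Your proposed mechanism --- exponentiate the commutator relation to the intertwining $e^{-\tau\cl K}M(\phi)=M(T(\tau)\phi)e^{-\tau\cl K}$ and then extract a continuous representative from the Laplace-transform formula --- is not carried out, and as sketched it is circular within this paper's framework. To pass from the intertwining to pointwise continuity of $t\mapsto f(t)$ you would need to know that $e^{-\tau\cl K}$ acts via a strongly continuous propagator, i.e.\ formula \eqref{CorrespondenceEvolutionSemigroupPropagatorFormula}; but Theorem~\ref{PropagatorEvolutionGeneratorCorrespondeceLpSetting} applies only to \emph{evolution} generators, and membership in that class already requires $\dom(\cl K)\subset C(\cI,X)$ by Definition~\ref{evol-pre-gener}(i). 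The paper closes this loop by a completely different and quantitative device: Lemma 2.16 of \cite{Nei1981} gives, for any generator of class $\cl G(M,\gb)$ satisfying the pre-generator relations, the a priori bound
\begin{equation*}
\|f(t)\|_X\le \frac{M}{(\xi-\gb)^{(p-1)/p}}\,\|(\widetilde{\cl K}+\xi)f\|_{L^p}, \qquad f\in\dom(\widetilde{\cl K}),\ \xi>\gb,
\end{equation*}
uniformly in $t$, i.e.\ a $C(\cI,X)$-norm estimate by the graph norm. Graph-norm convergence of an approximating sequence in $\dom(\widetilde{\cl K})$ then yields uniform convergence, and a uniform limit of continuous functions is continuous, so every $f\in\dom(\cl K)$ has a continuous representative. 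Without this estimate (or a worked-out non-circular substitute), your proof of the key property that makes $\cl K$ an evolution generator is incomplete.
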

%
\begin{proof}
 Assume that $\cl K$ belongs to the class $\cl G(M,\beta)$. Then by Lemma 2.16 of \cite{Nei1981} the estimate
 \begin{align*}
  \|f(t)\|_X\leq \frac{M}{(\xi-\beta)^{(p-1)/p}}\|(\cl K+\xi)f\|_{L^p}\ , ~~f\in \dom(\cl K)\, ,
 \end{align*}
holds a.e. in $\cI$ for all $\xi>\beta$. In particular, one gets for any $f\in \dom(\tilde {\cl K})$:
\begin{align*}
 \|f\|_C\leq \frac{M}{(\xi-\beta)^{(p-1)/p}}\|(\widetilde {\cl K}+\xi)f\|_{L^p} \, .
\end{align*}
Hence, we conclude for the closure $\cl K$ of $\widetilde{\cl  K}$ one has $\dom(\cl K)\subset C(\cI,X)$.

Now, we show that ${\cl K}$ is an evolution generator. Let $f\in\dom(\cl K)$. Then,
by the closeness of $\cl K$, there is a sequence $f_n\in\dom(\widetilde{\cl K})$ such that $f_n\rightarrow f$
and $\widetilde Kf_n\rightarrow \cl Kf$,  both in $L^p(\cI,X)$. Let $\phi\in W^{1,\infty}(\cI)$. Since
$\widetilde{\cl K}$ is an evolution pre-generator, Definition \ref{evol-pre-gener}, {\rm (ii)} yields
 \begin{align*}
  \widetilde {\cl K} M(\phi)f_n=M(\phi)\widetilde{\cl K} f_n+M(\dot \phi)f_n.
 \end{align*}
Note that the right-hand side converges to $M(\phi)\cl K f+M(\dot \phi)f$. Therefore,
we conclude that $M(\phi)f\in\dom(\cl K)$ and $\cl KM(\phi)f=M(\phi)\cl K f+M(\dot \phi)f$. Hence,
$\cl K$ is an evolution generator.

Now let $\cl K$ and $\cl K'$ be two different extensions of $\widetilde{\cl K}$ that are both evolution
generators. Since $\cl K$ is the closure of $\widetilde {\cl K}$ and $\cl K'$ is closed, we get
$\dom(\cl K) \subseteq \dom(\cl K')$ and the restriction: $K'\upharpoonright\dom(\cl K)=\cl K$.
Recall that $e^{-s\cl K}(\dom(\cl K))\subseteq \dom(\cl K)$, for $s\geq 0$. Then for all $f\in\dom(\cl K)$
and $0 < s < \tau$ we obtain
 \begin{align*}
 \frac{d}{ds}\{ e^{-(\tau-s)\cl K'}e^{-sK}f\}=e^{-(\tau-s)\cl K'}(\cl K'-\cl K)e^{-s\cl K}f = 0 \ .
 \end{align*}
Hence, the function $s\mapsto e^{-(\tau-s)\cl K'}e^{-s\cl K}u$ is a constant for each $u\in\dom(\cl K)$.
Thus, the semigroup generated by $\cl K'$ must be the same as the one by $\cl K$, which implies
$\cl K=\cl K'$.
\end{proof}

These considerations suggest the following strategy for solving the non-ACP: \\
To find the unique solution operator of the problem \eqref{CauchyProblem} it is sufficient to prove
that the evolution pre-generator $\widetilde {\cl K}$, defined by \eqref{DefinitionKTilde},
is an \textit{essential generator}, i.e., the closure of $\widetilde{\cl K}$ is a generator.

\subsection{A special class of evolution equations} \label{sec:3.2}

We are interested in the non-ACP of a special form. Setting $C(t) :=A+B(t)$, $t\in \cI$,
$\dom(C(t))=\dom(A)\cap\dom(B(t))$ we see that this problem fits into \eqref{CauchyProblem}.

The operator $A$ in $X$ trivially induces a multiplication operator $\mathcal{A}$ in the Banach
space $L^p(\cI,X)$. The operator family $\{B(t)\}_{t\in \cI}$ induces a multiplication operator $\cl B$.
Our aim is, to show that the closure of the evolution pre-generator
\begin{align*}
 \widetilde{\cl K} := D_0+\cl A+\cl B,~~\dom(\widetilde {\cl K}) := \dom(D_0)\cap\dom(\cl A)\cap\dom(\cl B)
\end{align*}
becomes an evolution generator under appropriate assumptions on the operator $A$ and the operator family
$\{B(t)\}_{t\in\cI}$.

Firstly, we consider the operator sum $D_0+\cl{A}$. Let $A$ be a generator in $X$ with the semigroup
$\{e^{-\tau A}\}_{\tau\geq0}$.
Then $\mathcal{A}$ is a generator in $L^p(\cI,X)$ with semigroup $\{e^{-\tau \cl A}\}_{\tau\geq0}$ given by
$(e^{-\tau \mathcal{A}}f)(t)=e^{-\tau A}f(t)$ for a.e. $t\in \cI$ (cf. Lemma \ref{InducedConstantGeneratorLp}).
Since $A$ is time-independent, the operators $\cl A$ and $D_0$ commute. Hence, the product
\begin{align*}
 e^{-\tau D_0}e^{-\tau \cl A}f=\chi_\cI(\cd-\tau)e^{-\tau\mathcal{A}}f(\cd-\tau)
\end{align*}
defines a semigroup on $ L^p(\cI,X)$. The generator of this semigroup is denoted by $\cl K_0$ and  satisfies the following properties:
%
%
\begin{lem}\label{L0Properties}
 Let $A$ be a generator in $X$ inducing the multiplication operator $\cl A$ in $L^p(\cI,X)$. Let
 $D_0$ be the generator of the right-shift semigroup on $L^p(\cI,X)$. Then, the following holds:

\item[\;\;\rm (i)] The set $\cl D:=\dom(D_0)\cap\dom(\cl A)$ is dense in $L^p(\cI,X)$ and it has a
dense cross-section in $X$. In particular, $\dom(\cl K_0)$ has a dense cross-section in $X$.
	
\item[\;\;\rm (ii)] The restriction $\cl K_0\upharpoonright\cl D =: \widetilde {\cl K}_0 = D_0 + \cl A$ and
the closure $\overline{(\widetilde{\cl K_0})} = \cl K_0$.
	
\item[\;\;\rm (iii)] $\|e^{-\tau \cl K_0}\|_{\cl B(L^p(\cI,X))}=\|e^{-\tau \mathcal{A}}\|_{\cl B(L^p(\cI,X))}$
for $\tau \in \cI $. In particular, the generators $A$, $\cl A$ and $\cl K_0$ belong to the same class
$\cl G(M,\beta)$.
\end{lem}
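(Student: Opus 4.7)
The plan is to prove the three parts in the stated order, exploiting the explicit semigroup formula $e^{-\tau \cK_0} = e^{-\tau D_0} e^{-\tau \cA}$ together with the fact that $\cA$ and $D_0$ commute at the semigroup level because $A$ is time-independent.

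For (i), I would apply Proposition \ref{DensityDenseCrossSectionLp} to $\cD$. To produce a dense cross-section, given $t_0 \in \cI_0$ and $x \in \dom(A)$, I pick $\varphi \in C^1(\cI)$ with $\varphi(0) = 0$ and $\varphi(t_0) = 1$; then $f(t) := \varphi(t) x$ lies in $\dom(D_0)$ (it belongs to $W^{1,p}(\cI,X)$ and vanishes at $0$) and in $\dom(\cA)$ (since $f(t) \in \dom(A)$ and $Af = \varphi \cdot Ax \in L^p(\cI,X)$), so $f \in \cD$ with $f(t_0) = x$. Density of $\dom(A)$ in $X$ yields a dense cross-section. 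The $M(\phi)$-invariance of $\cD$ for $\phi \in W^{1,\infty}(\cI)$ is routine: the Leibniz rule keeps $\phi f$ in $W^{1,p}$ vanishing at $0$, while the scalar $\phi(t)$ commutes with $A$ pointwise. Density of the cross-section of $\dom(\cK_0)$ follows automatically from $\cD \subset \dom(\cK_0)$, which is established in (ii).

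For (ii), using the commutativity of $\{e^{-\tau D_0}\}$ and $\{e^{-\tau \cA}\}$, I decompose for $f \in \cD$:
\begin{equation*}
\frac{f - e^{-\tau D_0} e^{-\tau \cA} f}{\tau} = \frac{f - e^{-\tau \cA} f}{\tau} + e^{-\tau \cA}\, \frac{f - e^{-\tau D_0} f}{\tau}.
\end{equation*}
As $\tau \to 0^+$, the first term tends to $\cA f$ since $f \in \dom(\cA)$, and the second to $D_0 f$ since $f \in \dom(D_0)$ and $\{e^{-\tau \cA}\}$ is strongly continuous. Hence $\cD \subset \dom(\cK_0)$ with $\cK_0|_\cD = D_0 + \cA = \widetilde{\cK}_0$. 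To upgrade this to a core statement, I verify two cross-invariances: $e^{-\tau D_0}$ preserves $\dom(\cA)$ because the pointwise action of $A$ commutes with the translation $(e^{-\tau D_0}f)(t) = \chi_\cI(t-\tau)f(t-\tau)$, and $e^{-\tau \cA}$ preserves $\dom(D_0)$ because $e^{-\tau A}$ is bounded on $X$, commutes with $\partial_t$, and preserves the vanishing boundary condition at $t = 0$. Thus $e^{-\tau \cK_0}\cD \subset \cD$, and combining this with density from (i), the standard core criterion gives $\overline{\widetilde{\cK}_0} = \cK_0$.

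For (iii), a direct calculation yields, for $f \in L^p(\cI,X)$ and $\tau \in \cI$,
\begin{equation*}
\|e^{-\tau \cK_0} f\|_{L^p}^p = \int_\tau^T \|e^{-\tau A} f(t-\tau)\|_X^p \, dt \le \|e^{-\tau A}\|_{\cB(X)}^p \|f\|_{L^p}^p,
\end{equation*}
so $\|e^{-\tau \cK_0}\|_{\cB(L^p)} \le \|e^{-\tau A}\|_{\cB(X)} = \|e^{-\tau \cA}\|_{\cB(L^p)}$ (the last identity by Proposition \ref{OperatorNormsBoundedOperator}). For the reverse inequality I test on $f(t) = \varphi(t) x$ with scalar $\varphi \in L^p(\cI)$ supported in $[0, T-\tau]$ and $x \in X$ nearly attaining $\|e^{-\tau A}\|_{\cB(X)}$, obtaining $\|e^{-\tau \cK_0} f\|_{L^p} = \|e^{-\tau A}x\|_X \|\varphi\|_{L^p}$ and $\|f\|_{L^p} = \|x\|_X \|\varphi\|_{L^p}$. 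The class statement follows at once: for $\tau \in \cI$ we have $\|e^{-\tau \cK_0}\| = \|e^{-\tau A}\| \le M e^{\beta \tau}$, while for $\tau > T$ the kernel $\chi_\cI(t-\tau)$ vanishes on $\cI$ so $e^{-\tau \cK_0} \equiv 0$, which also satisfies the bound. The most delicate step is the verification of the two cross-invariances underlying the core property in (ii), since they demand a pointwise argument in $L^p$ carefully coupled with the Sobolev boundary condition at $t = 0$.
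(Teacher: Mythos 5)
Your proof is correct and follows essentially the same route as the paper: part (i) is proved by exactly the same construction (a scalar function $\psi$ with $\psi(0)=0$, $\psi(t_0)=1$ multiplied by an element of the dense set $\dom(A)$, combined with Proposition \ref{DensityDenseCrossSectionLp}), while for (ii) and (iii) the paper merely asserts the conclusions (``holds by definition'' / ``follows immediately''), and your difference-quotient decomposition plus the invariant-dense-subspace core criterion, together with the direct norm computation on $f=\varphi\otimes x$, are precisely the standard details being suppressed there. The only caveat is the endpoint $\tau=T$, where your reverse inequality in (iii) degenerates ($\varphi$ must be supported in a single point) --- but there the equality asserted in the lemma itself fails, since $e^{-T\cK_0}=0$ while $e^{-TA}$ need not vanish, so this is a defect of the statement rather than of your argument.
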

%
\begin{proof}
(i) Note that for any $\phi\in W^{1,\infty}(\cI)$ we have $M(\phi)\cl D\subset \cl D$. Now we prove that
$\cl D$ has a dense cross-section in $X$. To this aim, let $t_0\in \cI \setminus \{0\}$ and $x_0\in X$ be
fixed. Since $A$ is a generator in $X$, by the Yosida approximation it follows that
 \begin{align*}
  \dom(A)\ni x_\xi:=\xi(A+\xi)^{-1}x_0\rightarrow x_0, {\rm~~as~}\xi\rightarrow\infty.
 \end{align*}
Therefore, for any $\epsilon>0$ there exists $\xi>0$ such that $\|x_\xi-x_0\|_X<\epsilon$. Let
$\psi\in C^\infty(\cI)$ be such that $\psi(0)=0$ and $\psi(t_0)=1$. Then, $g$ defined by
$g(t)=\psi(t)x_\xi$ is in $\cl D$ and $\|g(t_0)-x_0\|_X<\epsilon$.

Assertion (ii) holds by definition and assertion (iii) follows immediately from the fact that
$\dom(D_0)\cap\dom(\cl A)$ is dense in $L^p(\cI,X)$ and that the operator $D_0$ belongs to the class
$\cl G(1,0)$.
\end{proof}
\begin{rem}\label{MaxParReg}
 In general, the operator $\widetilde{\cl K}_0 = D_0+\cl A$ must \textit{not} be a closed operator and the
 domain of $\widetilde{\cl K}_0$ may be \textit{larger} than $\dom(D_0)\cap\dom(\cl A)$. Let 
 \begin{align*}
 \partial_{t}u(t) = - Au(t), \quad u(0) = u_0 \ ,
 \end{align*}
be the evolution problem associated to the densely defined and closed operator $A$. Let us recall that if $A$ satisfies the condition of
\textit{maximal parabolic regularity}, see e.g.
\cite{Acquistapace1987, Prato1984, PruessSchnaubelt2001, Arendt2007}, then $A$ has to be the generator of a \textit{holomorphic} semigroup and the operator $\widetilde{\cl K}_0$ is closed. Hence,
$\widetilde{\cl K}_0= \cl K_0$. However, if $A$ is the generator of a holomorphic semigroup, then in general it does not follow
that $A$ satisfies the condition of maximal parabolic regularity. This is only true for Hilbert spaces.
\end{rem}


\section{Existence and uniqueness of the solution operator of the evolution equation}\label{sec:4}

In this section we want to find the solution operator for the non-ACP \eqref{EvolutionProblem}
in the sense of Definition \ref{SolutionDefinition}. In particular, we show that the closure
$\overline{\widetilde{\cl K}}$ of the operator $\widetilde{\cl K} = D_0+\cl A+\cl B$
is a generator (cf. Theorem \ref{EvolutionProblemUniqueSolution}). In fact, we are going to prove that
$\cl K := \cl K_0+\mathcal{B}$ is an evolution generator.

Note that since we deal with many generators, there is a need to investigate the sum of them. To this
aim we recall two results from the perturbation theory for semigroup generators.
%
\begin{prop}[{\cite[Corollary IX.2.5]{Kato1980}}]\label{KatosPerturbationResults}~
   Let $A$ be the generator of a holomorphic semigroups and let
    $B$ be $A$-bounded with relative bound zero. Then $A + B$ is also
    the generator of a holomorphic semigroup.
\end{prop}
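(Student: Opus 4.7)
The plan is to use the sectorial-resolvent characterisation of generators of holomorphic semigroups together with a Neumann-series factorisation of the resolvent of $A+B$; this is the classical perturbation-theoretic argument, pushed through here by exploiting that the relative bound is \textit{zero}. In the sign convention used throughout the paper (where $A$ generates $\{e^{-tA}\}_{t\ge 0}$), the hypothesis that $A$ generates a holomorphic semigroup is equivalent to the existence of $\omega\in\dR$, $\theta\in(0,\pi/2)$ and $M\ge 1$ such that $\varrho(A)\supset\Sigma_\omega:=\{\lambda\in\dC\setminus\{\omega\}:|\arg(\omega-\lambda)|<\pi/2+\theta\}$ and $\|R(\lambda,A)\|_{\cB(X)}\le M/|\lambda-\omega|$ on $\Sigma_\omega$; it will suffice to derive an analogous estimate for $A+B$ on some sector of the same opening angle with a shifted vertex.

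On $\Sigma_\omega$ I would use the factorisation $A+B-\lambda=(I+B\,R(\lambda,A))(A-\lambda)$, which is well-defined because $\dom(A)\subset\dom(B)$ and $B\,R(\lambda,A)\in\cB(X)$. Once $I+B\,R(\lambda,A)$ is shown to be boundedly invertible on $X$, the identity $R(\lambda,A+B)=R(\lambda,A)(I+B\,R(\lambda,A))^{-1}$ produces the resolvent of $A+B$, and a Neumann series controls its norm by $(1-q)^{-1}\|R(\lambda,A)\|$ whenever $\|B\,R(\lambda,A)\|\le q<1$.

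To estimate $\|B\,R(\lambda,A)\|$ I would fix $\varepsilon>0$, use the relative-bound-zero property to find $b_{\varepsilon}\ge 0$ with $\|By\|_X\le\varepsilon\|Ay\|_X+b_{\varepsilon}\|y\|_X$ for $y\in\dom(A)$, apply it with $y=R(\lambda,A)x$, and use $A\,R(\lambda,A)=I+\lambda R(\lambda,A)$ to obtain
\[
 \|B\,R(\lambda,A)\|_{\cB(X)}\;\le\;\varepsilon+\varepsilon\,\frac{M|\lambda|}{|\lambda-\omega|}+\frac{b_{\varepsilon}\,M}{|\lambda-\omega|}.
\]
Since $|\lambda|/|\lambda-\omega|\le 1+|\omega|/|\lambda-\omega|$ remains bounded on any portion of $\Sigma_\omega$ where $|\lambda-\omega|$ is bounded below, one may first pick $\varepsilon$ so small that $\varepsilon(1+M)<1$, and then choose $R>0$ large enough so that the right-hand side is $\le q<1$ on $\{\lambda\in\Sigma_\omega:|\lambda-\omega|\ge R\}$; this region contains a sector of opening angle $\pi/2+\theta$ with a vertex $\omega'$ shifted sufficiently far to the right of $\omega$. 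The Neumann series then yields $\|R(\lambda,A+B)\|_{\cB(X)}\le M'/|\lambda-\omega'|$ on that sector, which is precisely the sectorial resolvent estimate characterising generators of holomorphic semigroups.

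Density of $\dom(A+B)=\dom(A)$ is immediate from $\dom(A)\subset\dom(B)$, and closedness of $A+B$ follows from the standard $A$-bounded perturbation argument with relative bound $\varepsilon<1$ combined with closedness of $A$. The main obstacle is the third step: one has to obtain a Neumann-series bound uniformly on a sector of opening angle strictly greater than $\pi/2$, not just on a right half-plane. This is precisely where the assumption of relative bound \textit{zero} is indispensable: with only relative bound less than one, the term $\varepsilon\,M|\lambda|/|\lambda-\omega|$ could not be forced below unity, since $M$ is tied to the opening angle $\theta$ and is fixed; one would then recover $A+B$ as the generator of a $C_0$-semigroup, but not of a \textit{holomorphic} one.
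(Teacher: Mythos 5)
The paper gives no proof of this proposition at all --- it is quoted verbatim from Kato's book (Corollary IX.2.5) --- and your argument is precisely the classical one behind that result: the sectorial resolvent characterisation together with the factorisation $R(\lambda,A+B)=R(\lambda,A)\,(I+B\,R(\lambda,A))^{-1}$, where the relative bound zero is used to force $\|B\,R(\lambda,A)\|\le q<1$ uniformly on a sector of unchanged opening angle with shifted vertex; this is correct, including the sign convention matching the paper's $e^{-tA}$ normalisation. The only inaccuracy is your closing aside: a relative bound that is merely $<1$ does not in general guarantee that $A+B$ generates even a $C_0$-semigroup, and in fact Kato's Theorem IX.2.4 shows that a sufficiently small \emph{nonzero} relative bound (depending on the sector's angle and the constant $M$) still yields a holomorphic semigroup, the quoted corollary being just the bound-zero specialisation.
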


The next result is due to J. Voigt \cite{Voigt1977}. It allows to treat perturbations with
non-zero relative bounds.
%
\begin{prop}[{\cite[Theorem 1]{Voigt1977}}]\label{VoigtsTheorem}
 Let $\{T(t)\}_{t\geq0}$ be a semigroup acting on the Banach space $X$ with generator
 $A\in\cl{G}(M_A,\gamma_A)$. Let $B$
be a densely defined linear operator in $ X$ and assume there is a dense subspace $\cl D\subset X$
such that:
\item[\;\;\rm (i)]  $\cl D\subset\dom(A)\cap\dom(B)$, $T(t)\cl D\subset \cl D$ for $t\geq0$ and for all
$x\in \cl D$ the function $t\mapsto BT(t)x$ is continuous,

\item[\;\;\rm (ii)]  There are constants $\beta_1\in(0,\infty]$ and $\beta_2\in[0,1)$ such that for all
$x\in \cl D$ it holds that
  \begin{align*}
   \int_0^{\beta_1} dt \, e^{-\gamma_At}\|BT(t)x\|\leq \beta_2\|x\| \ .
  \end{align*}
Then there exists a unique semigroup $\{S(t)\}_{t\geq0}$ and its generator $C$ is the closure of
the restriction $(A+B)\upharpoonright\cl D$, with domain $\dom(C)=\dom(A)$.
Moreover, the operator $B\upharpoonright\cl D$ is $A\upharpoonright\cl D$-bounded and can be extended
uniquely to an $A$-bounded operator $\widehat B$ with domain $\dom(\widehat B)=\dom(A)$. For this extension
one gets that $C=A+\widehat B$. In particular, if $B$ is closed, then $B$ is $A$-bounded and $C=A+B$.
Moreover, the following estimate holds
\begin{align*}
 \|S(t)\|\leq \frac{M_A}{1-\beta_2}\left(\frac{M_A}{1-\beta_2}\right)^{t/\beta_1}e^{\gamma_A t},
 \quad t \ge 0 \ .
\end{align*}
\end{prop}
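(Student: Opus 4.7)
The plan is a Dyson--Phillips construction on the invariant subspace $\cl D$, where (ii) will supply the smallness needed for absolute convergence of a Neumann-type series, followed by identification of the generator through the Laplace transform. First I would rescale by setting $\tilde T(t):=e^{-\gga_A t}T(t)$, so that $\|\tilde T(t)\|\le M_A$ and (ii) reads $\int_0^{\gb_1}\|B\tilde T(t)x\|\,dt\le \gb_2\|x\|$ for $x\in\cl D$. By (i) the linear map $\cl D \ni x\mapsto B\tilde T(\cdot)x \in L^1([0,\gb_1],X)$ is bounded with norm $\le\gb_2$, hence extends uniquely to all of $X$ by density; one continues to write $B\tilde T(t)x$ for the resulting a.e.-defined element of $X$, preserving the bound $\int_0^{\gb_1}\|B\tilde T(t)x\|\,dt\le \gb_2\|x\|$ for every $x\in X$. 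The stated norm estimate for $\|S(t)\|$ will be recovered by multiplying back by $e^{\gga_A t}$ at the end.

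Next, for $t\in[0,\gb_1]$ and $x\in X$ I would set $S_0(t)x:=\tilde T(t)x$ and iteratively
\[
 S_{n+1}(t)x := \int_0^t \tilde T(t-s)\,B\,S_n(s)x\,ds,
\]
where $BS_n(s)x$ is understood via the extension just constructed. Induction based on Fubini and the extended bound should produce
\[
 \|S_n(t)x\|\le M_A\,\gb_2^n\,\|x\|, \qquad \int_0^{\gb_1}\|BS_n(s)x\|\,ds\le \gb_2^{n+1}\|x\|,
\]
so that $S(t)x:=\sum_{n\ge 0}S_n(t)x$ converges with $\|S(t)\|\le M_A/(1-\gb_2)$ on $[0,\gb_1]$. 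The resulting $S(\cdot)x$ satisfies the Duhamel equation $S(t)x=\tilde T(t)x+\int_0^t\tilde T(t-s)BS(s)x\,ds$, from which $S(t+s)=S(t)S(s)$ for $t+s\le\gb_1$ follows by a direct Fubini computation; extension past $\gb_1$ by the semigroup property and undoing the rescaling yield the stated norm bound, while strong continuity is inherited from $\tilde T$ together with absolute convergence of the series.

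To identify the generator $C$ of $\{S(t)\}$, I would differentiate the Duhamel equation at $t=0^+$ to obtain $Cx=(A+B)x$ for $x\in\cl D$, so $(A+B)\upharpoonright\cl D\subseteq C$. The Laplace-transform identity $B(\gl+A)^{-1}x=\int_0^\infty e^{-\gl t}B\tilde T(t)x\,dt$ combined with the extended estimate gives
\[
 \|B(\gl+A)^{-1}\|_{\cl B(X)} \le \frac{M_A\,\gb_2}{1-e^{-(\gl-\gga_A)\gb_1}}
\]
for $\gl>\gga_A$ large, showing that $B\upharpoonright\cl D$ is $A\upharpoonright\cl D$-bounded and admits a unique $A$-bounded extension $\widehat B$ on $\dom(A)$ (with $\widehat B=B$ when $B$ is itself closed). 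Then $A+\widehat B$ is closed on $\dom(A)$ and extends $(A+B)\upharpoonright\cl D$, and a Gronwall argument on the Duhamel equation yields uniqueness of $\{S(t)\}$, forcing $C=A+\widehat B$.

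The main obstacle is that a priori $B$ is defined only on $\dom(B)\supseteq\cl D$, whereas the iterates $S_n(s)x$ will generally not lie in $\cl D$, so $BS_n(s)x$ has no literal meaning. The resolution is precisely the density extension of $B\tilde T(\cdot)$ to a bounded map $X\to L^1([0,\gb_1],X)$, together with iterated Fubini rearrangements inside the induction, so that every application of $B$ is reduced to this extended operator acting on an $X$-valued integrand; one never needs $BT(r)y$ defined pointwise for $y\notin\cl D$.
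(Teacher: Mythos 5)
A preliminary remark: the paper does not prove this proposition at all --- it is quoted verbatim as Theorem~1 of Voigt \cite{Voigt1977} and used as an imported tool --- so there is no internal proof to compare against. Your overall plan (a Dyson--Phillips series whose convergence is forced by the Miyadera--Voigt smallness condition (ii), followed by identification of the generator via Duhamel and the Laplace transform) is exactly the strategy of the cited source, so the architecture is right. Nevertheless two steps of your sketch have genuine gaps.

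First, you place $B$ adjacent to $S_n$ in the iteration, $S_{n+1}(t)x=\int_0^t\tilde T(t-s)BS_n(s)x\,ds$, and give meaning to $BS_n(s)x$ only through the $L^1$-extension of $x\mapsto B\tilde T(\cdot)x$. That suffices to build the series, but it breaks the step ``differentiate the Duhamel equation at $t=0^+$'': from your own inductive bounds the best one gets is
\begin{equation*}
\frac1t\int_0^t\bigl\|BS(s)x-B\tilde T(s)x\bigr\|\,ds\;\le\;\frac{\beta_2}{1-\beta_2}\,\sup_{0\le r\le t}\|B\tilde T(r)x\|\;\longrightarrow\;\frac{\beta_2}{1-\beta_2}\,\|Bx\| \ ,
\end{equation*}
which does \emph{not} tend to zero, so you cannot conclude that $\frac1t\int_0^t\tilde T(t-s)BS(s)x\,ds\to Bx$, hence not that $Cx=(A+B)x$ on $\cl D$ (you cannot even conclude the limit exists). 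The cure is the opposite ordering, $S_{n+1}(t)x=\int_0^tS_n(t-s)\,BT(s)x\,ds$ for $x\in\cl D$: then $B$ is only ever applied to $T(s)x\in\cl D$, the integrand is jointly continuous by (i), the extension machinery becomes unnecessary for the construction, and the mean value converges cleanly to $S(0)Bx=Bx$. (A minor separate point: with the paper's sign convention $T(t)=e^{-tA}$ the Duhamel equation carries a minus sign in front of the integral.)

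Second, the relative-boundedness and core statements are asserted rather than derived. The identity $B(\lambda+A)^{-1}x=\int_0^\infty e^{-\lambda t}BT(t)x\,dt$ presupposes that $(\lambda+A)^{-1}x\in\dom(B)$ and that $B$ passes through the Bochner integral; neither is available for a non-closed $B$, and $(\lambda+A)^{-1}x$ need not lie in $\cl D$ even when $x$ does. What your estimate actually produces is a bounded operator $F_\lambda$ on $X$ built from the extension $Q$; it remains to show that $F_\lambda(\lambda+A)y=By$ for $y\in\cl D$, that the resulting $\widehat B$ is independent of $\lambda$ and agrees with $B$ when $B$ is closed, that $\dom(C)=\dom(A)$, and that $\cl D$ is a core for $C$ (so that $C$ really is the \emph{closure} of $(A+B)\upharpoonright\cl D$). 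These points are usually obtained from the Neumann series $\int_0^\infty e^{-\lambda t}S(t)\,dt=(\lambda+A)^{-1}\sum_{n\ge0}\bigl(-\widehat B(\lambda+A)^{-1}\bigr)^n$ together with the fact that a dense $T(t)$-invariant subspace of $\dom(A)$ is a core for $A$; none of this appears in your sketch, and it is precisely where the hypotheses $\beta_2<1$ and the $T(t)$-invariance of $\cl D$ do their real work.
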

\begin{lem}\label{ClBClAalphaBounded}
 Assume (A1), (A2) and (A3) for the operators $A$ and the operator family $\{B(t)\}_{t\in\cI}$. Then,
 we get $\|\cl B\cl A^{-\alpha}\|_{\cl B(L^p(\cI,X))}\leq C_\alpha$.
\end{lem}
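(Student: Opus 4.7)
The strategy is to identify $\cl A^{-\alpha}$ with the multiplication operator on $L^p(\cI,X)$ induced by the bounded operator $A^{-\alpha}$, after which the estimate becomes a fiberwise application of (A3). By (A1) and Corollary \ref{InducedConstantGeneratorLp}, the semigroup $\{e^{-\tau\cl A}\}_{\tau\ge 0}$ acts pointwise as $(e^{-\tau\cl A}f)(s)=e^{-\tau A}f(s)$; hence $\cl A$ inherits from $A$ the property of generating a bounded holomorphic semigroup, and $0\in\varrho(\cl A)$ because $\cl A^{-1}$ is multiplication by the bounded operator $A^{-1}$. Thus $\cl A^{-\alpha}$ is well defined by the integral formula recalled in \S\ref{sec:2.1}, and Fubini applied to
\begin{align*}
\cl A^{-\alpha}f \;=\; \frac{1}{\Gamma(\alpha)}\int_0^\infty t^{\alpha-1}e^{-t\cl A}f\,dt
\end{align*}
gives $(\cl A^{-\alpha}f)(s)=A^{-\alpha}f(s)$ for a.e. $s\in\cI$ and every $f\in L^p(\cI,X)$.

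Next I would check that $\cl A^{-\alpha}f\in\dom(\cl B)$ for every $f\in L^p(\cI,X)$. Fiberwise, (A3) says $A^{-\alpha}f(s)\in \ran(A^{-\alpha})=\dom(A^{\alpha})\subset\dom(B(s))$, and $\|B(s)A^{-\alpha}\|_{\cl B(X)}\le C_\alpha$ for a.e. $s$. So the only delicate point is strong measurability of $s\mapsto B(s)A^{-\alpha}f(s)$, because (A2) guarantees measurability of $s\mapsto B(s)x$ only for $x\in\dom(A)$, whereas $A^{-\alpha}y$ need not lie in $\dom(A)$ for a general $y\in X$. The key step is to approximate $y\in X$ by $y_n:=n(A+n)^{-1}y\in\dom(A)\subset\dom(A^{1-\alpha})$, so that $A^{-\alpha}y_n=A^{-1}A^{1-\alpha}y_n\in\dom(A)$; then $s\mapsto B(s)A^{-\alpha}y_n$ is strongly measurable by (A2), and (A3) yields
\begin{align*}
\|B(s)A^{-\alpha}(y-y_n)\|_X\;\le\;C_\alpha\,\|y-y_n\|_X\;\longrightarrow\;0
\end{align*}
uniformly in $s$, so $s\mapsto B(s)A^{-\alpha}y$ is measurable as a uniform limit. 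A standard approximation of $f$ by simple functions then delivers measurability of $s\mapsto B(s)A^{-\alpha}f(s)$.

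Once measurability is secured, the estimate is immediate: the family $\{B(t)A^{-\alpha}\}_{t\in\cI}$ is a measurable family of bounded operators on $X$ with $\esssup_{t\in\cI}\|B(t)A^{-\alpha}\|_{\cl B(X)}\le C_\alpha$ by (A3), and $\cl B\cl A^{-\alpha}$ is precisely the multiplication operator induced by this family. Proposition \ref{OperatorNormsBoundedOperator} then gives
\begin{align*}
\|\cl B\cl A^{-\alpha}\|_{\cl B(L^p(\cI,X))}\;=\;\esssup_{t\in\cI}\|B(t)A^{-\alpha}\|_{\cl B(X)}\;\le\;C_\alpha,
\end{align*}
which is the claim. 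The main technical obstacle is the measurability argument above; the norm bound itself is essentially a rewriting of (A3) once the identification $(\cl A^{-\alpha}f)(s)=A^{-\alpha}f(s)$ is in hand.
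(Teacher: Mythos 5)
Your argument is correct and follows essentially the same route as the paper's (one-line) proof: identify $\cl A^{-\alpha}$ with the multiplication operator induced by $A^{-\alpha}$, observe that $\{B(t)A^{-\alpha}\}_{t\in\cI}$ is then a family of bounded operators with $\esssup_{t\in\cI}\|B(t)A^{-\alpha}\|_{\cl B(X)}\le C_\alpha$ by (A3), and conclude via Proposition \ref{OperatorNormsBoundedOperator}. The only difference is that you make explicit the strong-measurability step (Yosida approximation of $A^{-\alpha}y$ by elements of $\dom(A)$ where (A2) applies), which the paper's citation of Lemmas \ref{ClosenesInducedOperatorLp} and \ref{DomainsInclusionsLp} leaves implicit; this is a welcome clarification rather than a deviation.
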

\begin{proof}
 The claim follows directly using Lemma \ref{ClosenesInducedOperatorLp} and Lemma \ref{DomainsInclusionsLp}.
\end{proof}
\begin{prop}\label{KbecomesGenerator}
Let the assumptions  (A1), (A2) and (A3) be satisfied.
Then $\cl K= \cl K_0+\mathcal{B}$ is a generator in $L^p(\cI,X)$, $p \in [1,\infty)$, with domain
$\dom(\cl K)=\dom(\cl K_0)$.
\end{prop}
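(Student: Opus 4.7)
My strategy is to apply Voigt's perturbation theorem (Proposition~\ref{VoigtsTheorem}) with the unperturbed generator $\cl K_0$ and the perturbation $\cl B$, using the core $\cl D := \dom(D_0)\cap\dom(\cl A)$. Since $\cl K_0\in\cG(M_A,0)$ by Lemma~\ref{L0Properties}(iii), the parameter $\gamma_A$ appearing in Voigt's theorem is zero, which simplifies the integral bound. Since $\cl B$ is already closed by Proposition~\ref{ClosenesInducedOperatorLp}, the last assertion of Voigt's theorem will deliver exactly the desired identification $\cl K = \cl K_0 + \cl B$ with $\dom(\cl K) = \dom(\cl K_0)$.

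\textbf{Checking hypothesis (i).} First I would verify that $\cl D$ is dense (Lemma~\ref{L0Properties}(i)) and contained in $\dom(\cl K_0)\cap\dom(\cl B)$: the first inclusion is Lemma~\ref{L0Properties}(ii); for the second I would use $\dom(A)\subset\dom(A^\alpha)\subset\dom(B(t))$ together with (A3) and the interpolation bound $\|A^\alpha x\|_X \le c(\|Ax\|_X + \|x\|_X)$ to show that $t\mapsto B(t)f(t)$ is $p$-integrable when $f\in\cl D$. Next I would check the invariance $e^{-\tau\cl K_0}\cl D\subset\cl D$: using the factorisation $e^{-\tau\cl K_0} = e^{-\tau D_0}e^{-\tau\cl A}$ from Lemma~\ref{L0Properties}, the commutativity of $e^{-\tau A}$ with $A$ guarantees that $e^{-\tau\cl A}$ preserves $\dom(\cl A)$, while $(e^{-\tau A}f)(0)=e^{-\tau A}f(0)=0$ shows it also preserves $\dom(D_0)$; the shift $e^{-\tau D_0}$ trivially preserves both. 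Finally, for strong continuity of $\tau\mapsto \cl B e^{-\tau\cl K_0}f$ on $\cl D$, I would write $\cl B e^{-\tau\cl K_0}f = (\cl B\cl A^{-\alpha})\,e^{-\tau\cl K_0}\cl A^\alpha f$, invoking commutativity of $\cl A^\alpha$ with both factors of $e^{-\tau\cl K_0}$; the factor $\cl B\cl A^{-\alpha}$ is bounded by Lemma~\ref{ClBClAalphaBounded}, and $\cl A^\alpha f \in L^p(\cI,X)$, so continuity follows from strong continuity of $\{e^{-\tau\cl K_0}\}_{\tau\ge 0}$.

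\textbf{Checking hypothesis (ii).} The central estimate I expect to derive is
\begin{align*}
\|\cl B e^{-\tau \cl K_0}\|_{\cB(L^p(\cI,X))} \le \|\cl B\cl A^{-\alpha}\|\cdot \|\cl A^\alpha e^{-\tau\cl K_0}\| \le C_\alpha\, M^A_\alpha\, \tau^{-\alpha},\qquad \tau>0.
\end{align*}
Here I would use the commuting factorisation $\cl A^\alpha e^{-\tau\cl K_0} = e^{-\tau D_0}\,\cl A^\alpha e^{-\tau\cl A}$, the contractivity $\|e^{-\tau D_0}\|\le 1$, and Proposition~\ref{OperatorNormsBoundedOperator}, which identifies $\|\cl A^\alpha e^{-\tau\cl A}\|_{\cB(L^p(\cI,X))} = \esssup_{t\in\cI}\|A^\alpha e^{-\tau A}\|_{\cB(X)}$; the latter is bounded by $M^A_\alpha\,\tau^{-\alpha}$ thanks to Proposition~\ref{HolomorphicSemigroupEstimate} applied to the bounded holomorphic semigroup generated by $A$. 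Integration then yields
\begin{align*}
\int_0^{\beta_1}\|\cl B e^{-\tau\cl K_0}f\|_{L^p}\,d\tau \le \frac{C_\alpha\, M^A_\alpha}{1-\alpha}\,\beta_1^{1-\alpha}\|f\|_{L^p},
\end{align*}
and choosing $\beta_1$ sufficiently small makes the coefficient strictly less than $1$, which is Voigt's condition with $\gamma_A=0$.

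\textbf{Conclusion and main obstacle.} Voigt's theorem will then assert that the closure of $(\cl K_0+\cl B)\!\upharpoonright\!\cl D$ generates a semigroup on $L^p(\cI,X)$ and has domain $\dom(\cl K_0)$. Because $\cl B$ is closed, the concluding statement of the theorem identifies this closure with $\cl K_0 + \cl B$ itself, yielding $\dom(\cl K)=\dom(\cl K_0)$ as claimed. The step I expect to be the most delicate is the reduction of the $L^p$-norm estimate to the pointwise fractional-power estimate on $X$: it hinges on the commutativity of $\cl A$ with the shift $D_0$ (so that $\cl A^\alpha$ commutes with $e^{-\tau D_0}$) and on the lifting provided by Proposition~\ref{OperatorNormsBoundedOperator}. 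Once this smoothing estimate is in place, everything else is rather standard within the Voigt framework.
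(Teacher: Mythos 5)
Your proposal is correct and follows essentially the same route as the paper: Voigt's theorem applied to the pair $(\cl K_0,\cl B)$ on the core $\cl D=\dom(D_0)\cap\dom(\cl A)$, with the key smoothing bound $\|\cl B e^{-\tau\cl K_0}\|\le C_\alpha M^A_\alpha\tau^{-\alpha}$ obtained from $\cl B\cl A^{-\alpha}$ bounded and Proposition~\ref{HolomorphicSemigroupEstimate}, and the closedness of $\cl B$ used to identify the generator as $\cl K_0+\cl B$ with domain $\dom(\cl K_0)$. Your treatment of the continuity of $\tau\mapsto\cl Be^{-\tau\cl K_0}f$ via the factorisation $\cl B\cl A^{-\alpha}\,e^{-\tau\cl K_0}\cl A^{\alpha}f$ is a slightly cleaner substitute for the paper's explicit estimate of $\|\cl B(e^{-\tau\cl K_0}-I)f\|$ at $\tau=0$, but this is a cosmetic difference only.
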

%
\begin{proof}
We want to apply Proposition \ref{VoigtsTheorem}. Let $\cl D=\dom(D_0)\cap\dom(\cl A)\cap\dom(\cl B)$.
Since $\dom(\cl A)\subset\dom(\cl B)$, we have $\cl D=\dom(D_0)\cap\dom(\cl A)$. Using Lemma \ref{L0Properties},
we conclude that $\cl D$ is a dense subspace of $L^p(\cI =[0,T],X)$, which is invariant under the semigroup
$\{e^{-\tau \cl K_0}\}_{\tau \geq0}$.

From Proposition \ref{HolomorphicSemigroupEstimate} we get that for a fixed $\alpha\in (0,1)$ and for any
$\tau\in (0, T] = \cI_0$ there exists a constant $M^A_\alpha$ (which depends only on $\alpha$) such that
$\|A^\alpha e^{-\tau A}\|\leq {M^A_\alpha}/{\tau^\alpha}$.

We prove conditions (i) and (ii) of Proposition \ref{VoigtsTheorem}. Let $f\in \cl D=\dom(D_0)\cap\dom(\cl A)
\subset C_0(\cI,X)$. Then for $\alpha\in(0,1)$ and $\tau>0$ we conclude that
\begin{align*}
  \|\mathcal{B}e^{-\tau \cl K_0}f\|_{L^p}^p
 =&\int_\cI dt \|B(t)e^{-\tau D_0}e^{-\tau A}f(t)\|_X^p 
 \leq \int_\cI dt \|B(t)A^{-\alpha}A^\alpha e^{-\tau A}\|_{\cl B(X)}^p \cdot \|f\|_{L^p}^p\leq\\
 \leq& \esssup_{t\in \cI}\|B(t)A^{-\alpha}\|_{\cl B(X)}^p\cdot\frac{(M^A_\alpha)^p}{\tau^{\alpha p}} T
 \cdot\|f\|_{L^p}^p
 \leq C_\alpha^p \frac{(M^A_\alpha)^pT}{\tau^{\alpha p}}\|f\|_{L^p}^p.
\end{align*}
Then, we get $\|\mathcal{B}e^{-\tau \cl K_0}f\|_{L^p} \leq C_\alpha {M^A_\alpha T^{1/p}}
\tau^{-\alpha} \, \|f\|_{L^p}.$ Moreover, for $f\in \cl D$ we have
\begin{align*}
 \|\cl B (e^{-\tau \cl K_0}&-I)f\|_{L^p} = \|\int_0^\tau \cl B e^{-\sigma  \cl K_0}
 \cl K_0f \|_{L^p} d\sigma = \|\int_0^\tau \cl B e^{-\sigma  \cl A} e^{-\sigma  D_0} \cl K_0f \|_{L^p} d\sigma \leq \\
 \leq & \|\cl B \cl A^{-\alpha}\|_{\cl B(L^p(\cI,X))} \int_0^\tau \|\cl A^\alpha e^{-\sigma
 \cl A}\|_{\cl B(L^p(\cI,X))}d\sigma  \|\cl K_0f \|_{L^p(\cI,X)} \leq\\
 \leq & C_\alpha M^A_\alpha \int_0^\tau \frac{1}{\sigma ^\alpha }d\sigma  \|\cl K_0f \|_{L^p(\cI,X)}
 =  \frac{C_\alpha M^A_\alpha}{1-\alpha} \tau^{1-\alpha} \|\cl K_0f \|_{L^p(\cI,X)},
\end{align*}
that yields continuity in $\tau = 0$ and hence, the function $\cI\ni \tau\mapsto
\mathcal{B}e^{-\tau \cl K_0}f\in L^p(\cI,X)$ is continuous. Moreover, we get
\begin{align*}
 \int_0^a\|\mathcal{B}e^{-\tau \cl K_0}f\|_{L^p(\cI,X)} d\tau \leq C_\alpha M^A_\alpha\|f\|_{L^p}
 \int_0^a\frac{1}{\tau^\alpha}d\tau =\frac{C_\alpha M^A_\alpha}{(1-\alpha)}a^{1-\alpha}\|f\|_{L^p(\cI,X)}.
\end{align*}
Now, take $a< \left({(1-\alpha)}/{C_\alpha M^A_\alpha}\right)^{1/(1-\alpha)}$. Hence, all conditions
of Proposition \ref{VoigtsTheorem} are satisfied. So we conclude that the operator $\cl K = \cl K_0+\mathcal{B}$
with domain $\dom(\cl K)=\dom(\cl K_0)$ is a generator.
\end{proof}

Now we can state the main theorem concerning the non-ACP \eqref{EvolutionProblem}.
%
\begin{thm}\label{OurEvolutionProblemUniqueSolution}
Let the assumptions (A1), (A2) and (A3) of Assumption \ref{ass:1.1} be satisfied.
Then the evolution problem \eqref{EvolutionProblem} has a unique solution operator in the sense of
Definition \ref{SolutionDefinition}.
\end{thm}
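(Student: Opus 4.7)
The plan is to verify the two hypotheses of Theorem \ref{EvolutionProblemUniqueSolution}: first, that the non-ACP \eqref{EvolutionProblem} is correctly posed (i.e.\ $\widetilde{\cl K} = D_0 + \cl A + \cl B$ with domain $\dom(D_0)\cap\dom(\cl A)\cap\dom(\cl B)$ is an evolution pre-generator); and second, that $\widetilde{\cl K}$ is closable in $L^p(\cI,X)$ with closure being a generator. The natural candidate for that closure is the operator $\cl K = \cl K_0 + \cl B$ supplied by Proposition \ref{KbecomesGenerator}.

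I would first observe that under (A1) and (A3) one has $\dom(\cl A)\subset\dom(\cl B)$. Indeed, (A3) gives $\dom(A^\alpha)\subset\dom(B(t))$ a.e.\ with $\|B(t)A^{-\alpha}\|_{\cl B(X)}\leq C_\alpha$, and since $\dom(A)\subset\dom(A^\alpha)$, for $f\in\dom(\cl A)$ the function $t\mapsto B(t)f(t)= B(t)A^{-\alpha}A^\alpha f(t)$ lies in $L^p(\cI,X)$ (compare Lemma \ref{ClBClAalphaBounded}). Consequently $\dom(\widetilde{\cl K}) = \dom(D_0)\cap\dom(\cl A) = \cl D$, and by Lemma \ref{L0Properties}(i) the set $\cl D$ has a dense cross-section in $X$; Lemma \ref{DenseCrossSectionEvolutionOperatorC} then yields that $\widetilde{\cl K}$ is an evolution pre-generator, i.e.\ the problem is correctly posed. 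Next, from Proposition \ref{KbecomesGenerator} the operator $\cl K = \cl K_0 + \cl B$ with $\dom(\cl K)=\dom(\cl K_0)$ is a generator in $L^p(\cI,X)$. The inclusion $\widetilde{\cl K}\subset\cl K$ is immediate: $\cl D\subset\dom(\cl K_0)$, and by Lemma \ref{L0Properties}(ii) we have $\cl K_0 f = (D_0+\cl A)f$ on $\cl D$, so $\cl K f = (D_0+\cl A+\cl B)f = \widetilde{\cl K}f$ there. In particular, since $\cl K$ is closed, $\widetilde{\cl K}$ is closable with $\overline{\widetilde{\cl K}}\subset\cl K$.

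It remains to establish the reverse inclusion $\cl K\subset \overline{\widetilde{\cl K}}$. Here I would invoke the conclusion of Voigt's theorem (Proposition \ref{VoigtsTheorem}), which was applied in the proof of Proposition \ref{KbecomesGenerator} with the invariant core $\cl D$: it identifies the generator $\cl K$ precisely as the closure of the restriction $(\cl K_0+\cl B)\!\upharpoonright\!\cl D$. Since $\cl K_0+\cl B$ and $D_0+\cl A+\cl B$ agree on $\cl D$, this closure coincides with $\overline{\widetilde{\cl K}}$, hence $\overline{\widetilde{\cl K}}=\cl K$ is a generator. Applying Theorem \ref{EvolutionProblemUniqueSolution} finishes the proof, and the resulting unique solution operator is the propagator associated to the evolution semigroup $\{e^{-\tau\cl K}\}_{\tau\geq 0}$ via \eqref{CorrespondenceEvolutionSemigroupPropagatorFormula}.

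The step I expect to be the most delicate is the identification $\overline{\widetilde{\cl K}}=\cl K$ rather than the a priori weaker $\overline{\widetilde{\cl K}}\subset\cl K$: one must be careful that Voigt's perturbation result not only produces the generator $\cl K$ but also guarantees that $\cl D$ is a core for $\cl K$, otherwise the closure of $\widetilde{\cl K}$ could in principle be a proper restriction of $\cl K$ and uniqueness (Theorem \ref{EvolutionProblemUniqueSolution}) would not apply directly. Once this core property is extracted from Proposition \ref{VoigtsTheorem}, the rest of the argument is a bookkeeping exercise combining Lemmas \ref{DomainsInclusionsLp}, \ref{L0Properties} and \ref{DenseCrossSectionEvolutionOperatorC}.
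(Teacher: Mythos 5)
Your proposal is correct and follows essentially the same route as the paper: correct posedness via the dense cross-section of $\dom(D_0)\cap\dom(\cl A)$ (Lemma \ref{L0Properties}), and then Theorem \ref{EvolutionProblemUniqueSolution} combined with Voigt's theorem (Proposition \ref{VoigtsTheorem}) applied through Proposition \ref{KbecomesGenerator}. You merely spell out the details the paper leaves implicit, in particular the core identification $\overline{\widetilde{\cl K}}=\cl K$ coming from the statement in Proposition \ref{VoigtsTheorem} that the perturbed generator is the closure of $(\cl K_0+\cl B)\!\upharpoonright\!\cl D$, which is indeed the crucial point.
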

%
\begin{proof}
 The evolution problem is correctly posed since the set $\dom(D_0)\cap\dom(\cl A)$ has a dense cross-section
 in $X$ (cf. Lemma \ref{L0Properties}). Using Theorem \ref{EvolutionProblemUniqueSolution} and
 Proposition~\ref{VoigtsTheorem} the assertion follows.
\end{proof}
\begin{rem}~

\begin{enumerate}
\item The existence result does \textit{not} require that the operators $B(t)$ are generators.
\item The assumption $0\in\varrho(A)$ is just for simplicity. Otherwise,
the generator $A$ can be shifted by a constant $\eta>0$. Proposition \ref{PropertiesFractionalPowers} ensures
that the domain of the fractional power of $A$ does not change either.
\item The assumptions (A1), (A2), (A3) imply that for a.e. $t\in\cI$ the
operator $B(t)$ is infinitesimally small with respect to $A$. Indeed, fix $t\in\cI$, we conclude
\begin{align*}
 \dom(A+\eta)=\dom(A)\subset\dom( A^{\alpha})\subset\dom(B(t))
\end{align*}
for $\eta>0$ and so by Proposition \ref{PropertiesFractionalPowers} we have
\begin{align*}
 \|B(t)(A+\eta)^{-1}\|_{\cl B(X)}&\leq\|B(t)A^{-\alpha}\|_{\cl B(X)}\cdot\| A^{\alpha}( A+\eta)^{-1}
 \|_{\cl B(X)} \leq \frac{C_\alpha C_0}{\eta^{1-\alpha}}.
\end{align*}
And therefore for any $x\in\dom( A)\subset\dom( B(t))$, we get
\begin{align*}
 \|B(t)x\|_X\leq \frac{C_\alpha C_0}{\eta^{1-\alpha}}\cdot\|(A+\eta)x\|_X\leq C_\alpha C_0\eta^{\alpha}
 \left(\frac{1}{\eta}\|Ax\|_X+\|x\|_X \right).
\end{align*}
Since the relative bound can be chosen arbitrarily small by the large shift $\eta>0$, the perturbation
Proposition \ref{KatosPerturbationResults} yields that $A+B(t)$ is the generator of a holomorphic semigroup.
Hence, the problem \eqref{EvolutionProblem} is a parabolic evolution equation.
\end{enumerate}
\end{rem}

\section{Stability condition} \label{sec:5}

As we have already mentioned, the existence result holds even if the operators $B(t)$ are not generators.
In the following, we are going to approximate the solution using a Trotter product formula. To this end,
we have to take into account the condition (A4) from  Assumption \ref{ass:1.1}.
\begin{rem}
\item[\,\;(i)]
 In (A2) we assumed that the function $t\mapsto B(t)x$ for $x\in\dom(A)\subset\dom(B(t))$ is  strongly measurable.
 The assumption (A4) implies this property, which can be easily obtained using the Yosida approximation.
 Using (A3), (A4) and $\dom(A)\subset\dom(A^\alpha)\subset\dom(B(t))$, assumption (A2) is not needed anymore.

\item[\;\;(ii)] Using Theorem \ref{InducedGeneratorLp}, assumption (A4) implies that the induced operator $\cl B$
is a generator in $L^p(\cI,X)$.
\end{rem}
Now, let us consider the operator sums $A+B(t)$ and $\cl A+\cl B$.
\begin{lem}\label{SumClAClBContractionGenerator}
Let the operators $A$ and $\{B(t)\}_{t\in\cI}$ satisfy assumptions (A1), (A3) and (A4) and let $\cl A$ and
$\cl B$ be the corresponding induced multiplication operators in $ L^p(\cI,X)$. Then, $C(t) := A+B(t)$ is
generator of a holomorphic semigroup on $X$ and it induces the multiplication operator $\cl C$ given by
$\cl C = \cl A+\cl B$, which is in turn a generator of a holomorphic semigroup on $ L^p(\cI,X)$.
\end{lem}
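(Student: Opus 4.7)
The plan is to split the claim into three pieces: (i) verify that each pointwise operator $C(t)=A+B(t)$ generates a holomorphic semigroup on $X$; (ii) verify that on $L^p(\cI,X)$ the induced multiplication operator $\cC$ coincides with the operator sum $\cA+\cB$; and (iii) verify that $\cA+\cB$ is itself a generator of a holomorphic semigroup. Throughout I would rely on Proposition \ref{KatosPerturbationResults} (Kato's perturbation result for holomorphic semigroups) and on the fractional-power estimates from Proposition \ref{PropertiesFractionalPowers} and Proposition \ref{HolomorphicSemigroupEstimate}, both at the Banach space $X$ level and at the $L^p(\cI,X)$ level.

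For (i), I would shift $A$ by a large $\eta>0$ and estimate, for $x\in\dom(A)\subset\dom(A^\alpha)\subset\dom(B(t))$,
\begin{align*}
\|B(t)(A+\eta)^{-1}\|_{\cB(X)}\le \|B(t)A^{-\alpha}\|_{\cB(X)}\,\|A^\alpha(A+\eta)^{-1}\|_{\cB(X)}\le \frac{C_\alpha C_0}{\eta^{1-\alpha}},
\end{align*}
which by (A1) and (A3) tends to $0$ as $\eta\to\infty$. Hence $B(t)$ is $A$-bounded with arbitrarily small relative bound, and Proposition \ref{KatosPerturbationResults} yields that $C(t)=A+B(t)$ generates a bounded holomorphic semigroup on $X$ with $\dom(C(t))=\dom(A)$ for a.e.\ $t\in\cI$.

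For (iii), I would first observe that $\cA$ inherits holomorphy from $A$: by Proposition \ref{HolomorphicSemigroupEstimate} and Proposition \ref{OperatorNormsBoundedOperator},
\begin{align*}
\|\tau\cA e^{-\tau\cA}\|_{\cB(L^p(\cI,X))}=\esssup_{t\in\cI}\|\tau Ae^{-\tau A}\|_{\cB(X)}\le M^A_1,
\end{align*}
so $\cA$ generates a bounded holomorphic semigroup on $L^p(\cI,X)$ with $0\in\varrho(\cA)$ (since $\cA^{-1}$ is the multiplication operator induced by $A^{-1}$, which is bounded). I would then apply the $L^p$-analogue of the above estimate: by Lemma \ref{ClBClAalphaBounded} and Proposition \ref{PropertiesFractionalPowers}(i) (applied now to $\cA$),
\begin{align*}
\|\cB(\cA+\mu)^{-1}\|_{\cB(L^p(\cI,X))}\le \|\cB\cA^{-\alpha}\|\,\|\cA^\alpha(\cA+\mu)^{-1}\|\le \frac{C_\alpha C_0}{\mu^{1-\alpha}},
\end{align*}
so $\cB$ is $\cA$-bounded with relative bound zero. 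A second application of Proposition \ref{KatosPerturbationResults}, now in $L^p(\cI,X)$, gives that $\cA+\cB$ with $\dom(\cA+\cB)=\dom(\cA)$ generates a holomorphic semigroup on $L^p(\cI,X)$.

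For (ii), which I expect to be the main bookkeeping obstacle, I would show $\dom(\cC)=\dom(\cA)$ and $\cC=\cA+\cB$ pointwise. From $\|B(t)A^{-1}\|_{\cB(X)}\le C_\alpha\|A^{-(1-\alpha)}\|_{\cB(X)}$ a.e.\ and Lemma \ref{DomainsInclusionsLp} we get $\dom(\cA)\subset\dom(\cB)$, so for $f\in\dom(\cA)$ one has $f(t)\in\dom(A)=\dom(C(t))$ a.e.\ and $(\cC f)(t)=Af(t)+B(t)f(t)=(\cA f+\cB f)(t)\in L^p(\cI,X)$, giving $\cA+\cB\subset\cC$. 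Conversely, for $f\in\dom(\cC)$, the relative-bound inequality $\|Ax\|_X\le\tfrac{1}{1-a}\|(A+B(t))x\|_X+\tfrac{b}{1-a}\|x\|_X$ from step (i) (valid for $x=f(t)\in\dom(A)$ with $a<1$) implies that $t\mapsto Af(t)$ lies in $L^p(\cI,X)$, whence $f\in\dom(\cA)$. This proves $\cC=\cA+\cB$, completing the lemma.
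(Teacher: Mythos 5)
Your proof is correct and follows essentially the same route as the paper, which simply invokes Lemma \ref{ClBClAalphaBounded} together with Kato's perturbation result (Proposition \ref{KatosPerturbationResults}), applied both in $X$ and in $L^p(\cI,X)$. Your step (ii), identifying the induced multiplication operator $\cC$ with the operator sum $\cA+\cB$ via the uniform relative bound, is a detail the paper's two-line proof leaves implicit, and your treatment of it is sound.
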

%
\begin{proof}
 Using Lemma \ref{ClBClAalphaBounded} and Theorem \ref{KatosPerturbationResults}, we obtain that $C(t)$ and
 $\cl C$ generate holomorphic semigroups.
\end{proof}
A fundamental tool for approximation the solution operator (propagator) $\{U(t,s)\}_{(t,s)\in\Delta}$
of the evolution equation \eqref{EvolutionProblem} is the \textit{Trotter product formula}.
The first step is to establish a general sufficient condition for existence of this formula in the case
of evolution semigroups \S\ref{sec:3.1}.
\begin{prop}[{\cite[Theorem 3.5.8]{EngNag2000}}]\label{ClassicalTrotter}
 Let $A$ and $B$ be two generators in $X$. If there are constants $M > 0$ and $\go \in \R$ such that the
 condition
 \begin{align}\label{eq:5.1}
  \|(e^{-\tau/n A}e^{-\tau/n B})^n\|_{\cl B(X)}\leq Me^{\omega \tau} \ ,
 \end{align}
is satisfied for all $\tau\geq0$ and $n \in \N$ and if the closure of the sum: $C=\overline{A + B}$, is
in turn a generator, then the corresponding semigroup is given by the Trotter product formula
 \begin{align}\label{eq:5.2a}
  e^{-\tau C}x=\lim_{n\rightarrow \infty}(e^{-\tau/n A}e^{-\tau/n B})^n \, x,~~x\in X \ ,
 \end{align}
with uniform convergence in $\tau$ for compact intervals.
\end{prop}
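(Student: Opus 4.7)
The plan is to derive the formula from a general product/approximation theorem of Chernoff type, using the stability hypothesis \eqref{eq:5.1} to control the iterates and exploiting that $\dom(A)\cap\dom(B)$ is automatically a core for $C=\overline{A+B}$. Set $V(\tau):=e^{-\tau A}e^{-\tau B}$ for $\tau\ge 0$. Then $V$ is a strongly continuous contraction-type family with $V(0)=I$, and the powers in \eqref{eq:5.2a} are exactly $V(\tau/n)^{n}$. The target is to show that $V(\tau/n)^{n}x\to e^{-\tau C}x$ strongly, uniformly in $\tau$ on compacts.

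First I would check the derivative of $V$ at $\tau=0$ on the natural core $\cD:=\dom(A)\cap\dom(B)$. For $x\in\cD$ the standard splitting
\begin{align*}
\frac{x-V(\tau)x}{\tau}\;=\;\frac{x-e^{-\tau A}x}{\tau}\;+\;e^{-\tau A}\,\frac{x-e^{-\tau B}x}{\tau}
\end{align*}
combined with strong continuity of $\{e^{-\tau A}\}$ and the definitions of the generators (note the sign convention of the paper) gives $\lim_{\tau\searrow 0}\tau^{-1}(x-V(\tau)x)=(A+B)x=Cx$. Since $C$ is, by definition, the closure of $A+B$ restricted to $\cD$, the set $\cD$ is a core for $C$; this is exactly the density input required by the Trotter–Kato/Chernoff machinery.

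Next I would invoke the Chernoff product formula (or, equivalently, the Trotter–Kato approximation theorem). The assumptions to verify are: (a) $V(0)=I$ and $V(\cdot)$ is strongly continuous, which is immediate; (b) the stability bound $\|V(\tau/n)^{n}\|\le M e^{\omega\tau}$, which is precisely the hypothesis \eqref{eq:5.1}; (c) $\tau^{-1}(I-V(\tau))x\to Cx$ for $x$ in a core of the generator $C$, established in the previous step. Chernoff's theorem then yields the strong limit $V(\tau/n)^{n}x\to e^{-\tau C}x$ for every $x\in X$, uniformly for $\tau$ in compact subsets of $[0,\infty)$, which is the claim \eqref{eq:5.2a}.

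The main obstacle is really packaged inside Chernoff's theorem and consists of passing the pointwise convergence on the core $\cD$ to all of $X$. The stability bound \eqref{eq:5.1} plays two roles here: it makes $V(\tau/n)^{n}$ equicontinuous so that, together with strong convergence on a dense subspace, the standard $\varepsilon/3$-argument yields strong convergence on all of $X$; and it ensures that the candidate limit semigroup $e^{-\tau C}$ has matching growth, so that the comparison is consistent. Uniformity in $\tau$ on compact intervals follows from the equicontinuity of the family $\{V(\tau/n)^{n}\}_{n,\tau}$ and continuity of $\tau\mapsto e^{-\tau C}x$, by a standard compactness argument on the parameter interval. Hence, after checking (a)–(c), the proposition reduces to citing Chernoff's product formula in the form stated in \cite{EngNag2000}.
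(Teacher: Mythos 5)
Your proposal is correct and follows precisely the route of the cited source: the paper gives no proof of its own here, referring to \cite[Theorem 3.5.8]{EngNag2000}, where the Trotter product formula is deduced from Chernoff's product formula exactly as you describe (with $V(\tau)=e^{-\tau A}e^{-\tau B}$, the derivative computed on the core $\dom(A)\cap\dom(B)$, and the hypothesis that $C=\overline{A+B}$ generates supplying the range/density condition Chernoff requires). The only point worth making explicit is that the stated stability bound, applied with $\tau=ns$, already controls \emph{all} powers $\|V(s)^{n}\|\le Me^{\omega ns}$, which is the form Chernoff's theorem actually needs.
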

\begin{rem}
The condition \eqref{eq:5.1} is called the Trotter \textit{stability condition} for the pair of operators
$\{A,B\}$. It turns out that if the Trotter stability condition is satisfied for the pair $\{A,B\}$, then
the Trotter stability condition holds also for
the pair $\{B,A\}$, i.e. there are constants $M' > 0$, and $\omega' \in \R$ such that
\begin{align*}
\|(e^{-\tau/n B}e^{-\tau/n A})^n\|_{\cl B(X)}\leq M'e^{\omega' \tau}
 \end{align*}
for all $\tau\geq 0$ and $n\in \N$. In particular, the operators $A$ and $B$ can be interchanged in formula
\eqref{eq:5.2a} without modification of the left-hand side.
%
\end{rem}
In the following, we consider two different splittings of the evolution semigroup generator $\cl K $,
see \S\ref{sec:3.2}:
\begin{align*}
 \cl K = \overline{D_0 + (\cl A + \cl B)} = \overline{D_0 + \cl C}, ~~{\rm and~~}\cl K =  \cl K_0 + \cl B \ .
\end{align*}
For them we want to apply the Trotter product formula \eqref{eq:5.2a}. Note that the Trotter stability condition
\eqref{eq:5.1} can be expressed in terms of operators $A$ and $B(t)$.
%
\begin{defi}\label{StabilityDefinition} Let $X$ be a separable Banach space.
\begin{enumerate}
 \item Let $\{C(t)\}_{t\in\cI}$ be family of generators in $X$. The family $\{C(t)\}_{t\in\cI}$ is
 called stable if there is a constant $M > 0$ such that
 \begin{align}\label{eq:5.2}
 \esssup_{(t,s) \in \gD}\left\|\prod_{j=1}^{n\leftarrow} e^{-\frac{(t-s)}{n} C(s +
 \frac j n(t-s))}\right\|_{\cl B(X)} \le M
 \end{align}
holds for any $n\in\N$.

 \item Let $A$ be a generator and let $\{B(t)\}_{t\in\cI}$ be a family of generators in $X$. The family
 $\{B(t)\}_{t\in\cI}$
is called $A$-stable if there is a constant $M > 0$ such that
 \begin{align}\label{eq:5.3}
  \esssup_{(t,s)\in \gD}\left\|\prod_{j=1}^{n\leftarrow} G_j(t,s;n)\right\|_{\cl B(X)} \le M
 \end{align}
holds for any $n\in\N$ where $G_j(t,s;n)$ is defined by \eqref{ApproximationPropagatorIntroduction}.
\end{enumerate}
In the both cases these products are ordered for index $j$ increasing from the right to the left.
\end{defi}
\begin{rem}
There are different types of stability conditions known for the evolution equations. This is,
in particular, a condition of the \textit{Kato-stability}, which is equivalent to the \textit{renormalizability}
condition for the underlying Banach space, see \cite[Definition 4.1]{NeiZag2009}. We note that below
condition of the Trotter stability involves only the products (\ref{eq:5.2}), (\ref{eq:5.3}), of valued for
equidistant-time steps $(t-s)/n$. Therefore, it is \textit{weaker} than the Kato-stability condition.
\end{rem}
\begin{prop}\label{StabilityProposition}
Let $A$ be a generator and  let $\{B(t)\}_{t\in\cI}$ be a family of generators in the separable Banach space $X$.
Let $\cl A$ and  $\cl B$
be the multiplication operators in $L^p(\cI,X)$ induced, respectively, by $A$ and by $B(t)$. Let
$\cl K_0 : =\overline{D_0+\cl A}$.

 \item[\rm \;\;(i)]  If the operator family $\{C(t)\}_{t\in\cl I}$ is stable (\ref{eq:5.2}), then the
 pair $\{D_0,\cC\}$ is Trotter-stable (\ref{eq:5.1}).

\item[\rm\;\;(ii)]  If the family $\{B(t)\}_{t\in\cl I}$ is $A$-stable (\ref{eq:5.3}), then the pair
$\{\cK_0,\cB\}$ is Trotter-stable (\ref{eq:5.1}).
\end{prop}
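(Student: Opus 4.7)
The plan is to compute explicitly the $n$-th iterates of the Trotter products using the semigroup formulas for $e^{-\tau D_0/n}$, $e^{-\tau\cK_0/n}$, $e^{-\tau\cC/n}$ and $e^{-\tau\cB/n}$, to recognise the resulting $X$-operator products as exactly those appearing in \eqref{eq:5.2} and \eqref{eq:5.3}, and then to integrate in $t$ with the substitution $s=t-\tau$.

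For part (i), recall that $(e^{-\tau D_0/n}f)(t)=f(t-\tau/n)\,\chi_\cI(t-\tau/n)$ and, by Theorem~\ref{InducedGeneratorLp}, $(e^{-\tau\cC/n}g)(t)=e^{-\tau C(t)/n}g(t)$. Writing $\gs=\tau/n$, a straightforward induction on $k$ gives
\[
\big((e^{-\tau\cC/n}e^{-\tau D_0/n})^{k}f\big)(t) = \Big(\prod_{j=1}^{k\leftarrow} e^{-\gs\, C(t-(k-j)\gs)}\Big)\, f(t-k\gs)\,\chi_\cI(t-k\gs).
\]
Taking $k=n$ and putting $s=t-\tau$, the evaluation points $t-(n-j)\gs$ become exactly $s+j(t-s)/n$, so the bracketed operator product coincides with the one in \eqref{eq:5.2}. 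The assumed stability then bounds the $X$-operator norm of the product by $M$ pointwise in $(t,s)\in\gD$; raising to the $p$-th power, integrating in $t\in[\tau,T]$ and substituting $s=t-\tau$ yields $\|(e^{-\tau\cC/n}e^{-\tau D_0/n})^n\|_{\cB(L^p(\cI,X))}\le M$ uniformly in $n\in\dN$ and $\tau\ge 0$. To pass from this order to the one required in \eqref{eq:5.1}, we use the swap identity
\[
(e^{-\tau D_0/n}e^{-\tau\cC/n})^n = e^{-\tau D_0/n}\,\big(e^{-\tau\cC/n}e^{-\tau D_0/n}\big)^{n-1}\,e^{-\tau\cC/n}
\]
together with $\|e^{-\tau D_0/n}\|\le 1$ and $\|e^{-\tau\cC/n}\|\le M_C\,e^{|\gga_C|\tau/n}\le M_C\,e^{|\gga_C|\tau}$, both of which are supplied by Theorem~\ref{InducedGeneratorLp}.

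Part (ii) proceeds along the same lines. Using $(e^{-\tau\cK_0/n}g)(t)= e^{-\tau A/n}\,g(t-\tau/n)\,\chi_\cI(t-\tau/n)$ from Lemma~\ref{L0Properties} and $(e^{-\tau\cB/n}f)(t)=e^{-\tau B(t)/n}f(t)$, the analogous induction gives, with $\gs=(t-s)/n$ and $s=t-\tau$,
\[
\big((e^{-\tau\cB/n}e^{-\tau\cK_0/n})^{n}f\big)(t) = \Big(\prod_{j=1}^{n\leftarrow} e^{-\gs B(s+j\gs)}\,e^{-\gs A}\Big)\, f(s)\,\chi_\cI(s),
\]
where the bracketed factor is exactly $\prod_{j=1}^{n\leftarrow} G_j(t,s;n)$. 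The $A$-stability hypothesis \eqref{eq:5.3} then bounds this by $M$ on $\gD$, and the same $L^p$-integration with the change of variables delivers $\|(e^{-\tau\cB/n}e^{-\tau\cK_0/n})^{n}\|\le M$ uniformly in $n,\tau$. The swap identity combined with $\|e^{-\tau\cK_0/n}\|=\|e^{-\tau\cA/n}\|\le M_A\,e^{|\gga_A|\tau}$ (Lemma~\ref{L0Properties}(iii)) and the analogous bound $\|e^{-\tau\cB/n}\|\le M_B\,e^{|\gga_B|\tau}$ (Theorem~\ref{InducedGeneratorLp}) yields \eqref{eq:5.1} for the pair $\{\cK_0,\cB\}$.

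The only nontrivial bookkeeping lies in keeping the indexing of the operator factors and the time-grid points consistent between the iterated semigroup expression and the product in the stability definition: the offset by one that arises depending on whether the shift is composed before or after the multiplication is absorbed by the relabelling $j\leftrightarrow n+1-j$. Apart from this index matching, the argument is essentially an algebraic identification followed by a Fubini-type integration and a linear change of variables.
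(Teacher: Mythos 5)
Your proof is correct and follows essentially the same route as the paper: compute the $n$-fold iterate explicitly, identify the resulting operator product with the one in Definition~\ref{StabilityDefinition} via the substitution $s=t-\tau$, and reduce the $L^p$-operator norm to the pointwise stability bound. The only (harmless) differences are that the paper evaluates the $L^p$ norm by composing with the left shift and invoking Proposition~\ref{OperatorNormsBoundedOperator} to get an exact $\esssup$ identity, where you integrate directly to get the upper bound, and that you make the order-swap argument explicit rather than appealing to the remark following Proposition~\ref{ClassicalTrotter}.
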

%
\begin{proof}
(i) The right-shift semigroup $\{S(\tau)\}_{\tau\geq0}$ (\S \ref{sec:3.1}) is nilpotent, and hence, the
product is zero for $\tau \ge T$. We have
 \begin{align*}
  ((e^{-\frac{\tau}{n} \cC}e^{-\frac{\tau}{n} D_0})^nf)(t) = \prod^{\rightarrow n-1}_{j=0}
  e^{-\frac{\gt}{n}C(t - j\frac{\gt}{n})}\chi_\cI(t-\gt)f(t-\gt),\quad t\in\R,
 \end{align*}
$f \in L^p(\cI,X)$, $p \in [0,\infty)$, where the product is increasingly ordered in $j$ from the left to
the right. Let us introduce the left-shift semigroup $L(\tau)$, $\tau \ge 0$,
\begin{align}\label{L-shift}
(L(\tau)f)(t) :=  \chi_{\cI}(t+\tau)f(t+\tau), \quad t\in \cI, \quad f \in L^p(\cI,X),\quad p \in [1,\infty).
\end{align}
Using this semigroup we find that
 \begin{align*}
 \left( L(\tau) \left(e^{-\frac \tau n\cC}e^{-\frac \tau n D_0}\right)^n f\right)(t) =
 \left(\prod_{j=1}^{n\leftarrow} e^{-\frac\tau n C(t+ j\frac{\tau}{n})}\right)\chi_\cI(t+\gt)f(t),
\quad t \in \cI,
 \end{align*}
for $f \in L^p(\cI,X)$, $p \in [1,\infty)$. Therefore, the operator in the left-hand side is a multiplication
operator induced by
 \begin{align*}
 \left\{\prod_{j=1}^{n\leftarrow} e^{-\frac\tau n C(t+j\frac{\tau}{n})} \chi_\cI(t+\tau)\right\}_{t\in\cI}.
 \end{align*}
 Using Proposition \ref{OperatorNormsBoundedOperator} and assuming that $\{C(t)\}_{t\in\cI}$ is
 Trotter-stable \eqref{eq:5.2}, we obtain the estimate
 \begin{align*}
\|L(\tau)\left(e^{-\frac \tau n\cC}e^{-\frac \tau n D_0}\right)^n\|_{\cl B(L^p(\cI,X))}=
\esssup_{0 \le t \le T-\gt} \left\|\prod_{j=1}^{n\leftarrow} e^{-\frac{\tau}{n} C(t+j\frac{\tau}{n}})
\right\|_{\cB(X)} \le M.
 \end{align*}
Since for $\tau \in [0,T)$ one has
\begin{align}\label{L-shift-id}
 \|\left(e^{-\frac \tau n\cC}e^{-\frac \tau n D_0}\right)^n\|_{\cl B(L^p(\cI,X))} =
 \|L(\tau)\left(e^{-\frac \tau n\cC}e^{-\frac \tau n D_0}\right)^n\|_{\cl B(L^p(\cI,X))} \ ,
 \end{align}
this estimate proves the claim (i). In a similar manner one proves the claim (ii).
\end{proof}
Now, let us introduce the operator family:
\begin{align*}
 T(\tau) = e^{-\tau\cl B}e^{-\tau  \cl K_0}, \quad \gt \ge 0.
\end{align*}
Note that if the family $\{B(t)\}_{t\in\cl I}$ is $A$-stable \eqref{eq:5.3}, then
\begin{align}\label{T<M}
 \|T\left(\frac\tau n\right)^n\|_{\cl B(L^p(\cl I,X))} \leq M, ~~{\rm for}~~ n\in\N ~~{\rm and}~~ \tau\geq 0.
\end{align}
%
\begin{lem}\label{StabilityLemma}
 If the operator family $\{B(t)\}_{t\in\cl I}$ is $A$-stable, then
\begin{align*}
 \|T\left(\frac\tau n\right)^m\|_{\cl B(L^p(\cl I,X))} \leq M
\end{align*}
for any $m \in \N$, $n \in \N$ and $\gt \ge 0$. In particular, we have
\begin{align*}
 \|T(\tau)^m\|_{\cl B(L^p(\cl I,X))} \leq M
\end{align*}
for any $m \in \N$ and $\gt \ge 0$.
\end{lem}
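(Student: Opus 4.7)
The plan is to unfold the iterate $T(\sigma)^m$ (with $\sigma = \tau/n$) explicitly and reduce the desired bound to the $A$-stability hypothesis \eqref{eq:5.3} applied with the index $m$ in place of $n$. Since $\cl A$ and $D_0$ commute (Lemma~\ref{L0Properties}), one has $(e^{-\sigma\cl K_0}f)(t) = e^{-\sigma A}f(t-\sigma)\chi_\cI(t-\sigma)$ and $(e^{-\sigma\cl B}f)(t) = e^{-\sigma B(t)}f(t)$. A straightforward induction on $m$ then yields
\begin{equation*}
(T(\sigma)^m f)(t) \;=\; G(t;\sigma,m)\, f(t-m\sigma)\,\chi_\cI(t-m\sigma), \quad t\in\cI,
\end{equation*}
where $G(t;\sigma,m) := \prod_{j=0}^{m-1}\bigl[e^{-\sigma B(t-j\sigma)}e^{-\sigma A}\bigr]$, ordered in $j$ from left to right.

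Next I would split into two cases. If $m\sigma > T$, the characteristic function vanishes throughout $\cI$, so $T(\sigma)^m = 0$ and the claimed bound is trivial. If $m\sigma \le T$, a direct computation of the $L^p$-norm of the above expression, followed by the change of variables $u := t - m\sigma$, gives
\begin{equation*}
\|T(\sigma)^m f\|_{L^p}^{p} \;\le\; \Bigl(\esssup_{t\in[m\sigma,T]}\|G(t;\sigma,m)\|_{\cl B(X)}\Bigr)^{\!p}\, \|f\|_{L^p}^{p},
\end{equation*}
so it remains to bound this essential supremum uniformly by $M$.

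For the final step I would set $s := t-m\sigma$, so that $(t,s)\in\gD$ and $\sigma = (t-s)/m$. The reindexing $k := m-j$ converts the left-to-right product in $j$ into the right-to-left product in $k$, and a direct comparison with \eqref{ApproximationPropagatorIntroduction} (with $n$ replaced by $m$) gives
\begin{equation*}
G(t;\sigma,m) \;=\; \prod_{k=1}^{m\leftarrow} G_k(t,s;m).
\end{equation*}
The $A$-stability assumption \eqref{eq:5.3} applied with $m$ in place of $n$ then yields $\|G(t;\sigma,m)\|_{\cl B(X)} \le M$ uniformly in $(t,s)\in\gD$, which establishes the first claim; the \emph{in particular} assertion follows by specialising to $n=1$. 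The main bookkeeping point is the careful tracking of the order of the non-commuting factors under the reindexing; the substantive content is that the stability hypothesis for varying reference index already accommodates an arbitrary number of iterations of $T(\sigma)$ after the reparametrisation $s := t - m\sigma$, which is why no genuinely new analytical input is needed beyond what \eqref{T<M} already supplies in the diagonal case $m=n$.
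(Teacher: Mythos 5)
Your argument is correct, but it takes a different route from the paper's. The paper's proof is a one-line change of variables: since the stability estimate \eqref{T<M} asserts $\sup_{\tau\ge 0}\|T(\tau/n)^n\|\le M$ for \emph{every} $n\in\N$ with the supremum taken over all $\tau\ge 0$, one simply writes $\sup_{\tau\ge 0}\|T(\tau/n)^m\|=\sup_{\sigma\ge 0}\|T(\sigma/m)^m\|\le M$ by substituting $\tau=\sigma n/m$; the ``diagonal'' bound already contains the general one. You instead go back to the definition of $A$-stability \eqref{eq:5.3}: you unfold $T(\sigma)^m$ into the multiplication operator induced by $\prod_{k=1}^{m\leftarrow}G_k(t,t-m\sigma;m)$ composed with a shift, and invoke \eqref{eq:5.3} with $m$ in place of $n$. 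This is essentially an inlined proof of \eqref{T<M} itself (i.e.\ of the mechanism behind Proposition \ref{StabilityProposition}(ii)), extended off the diagonal $m=n$; your closing remark that the reparametrisation $s=t-m\sigma$ is what absorbs the mismatch between $m$ and $n$ is precisely the same observation the paper packages as the substitution $\tau=\sigma n/m$. What the paper's version buys is brevity and the clean separation of the lemma from the representation formula; what yours buys is self-containedness and an explicit identity ($T(\sigma)^m$ as the operator induced by the propagator approximants) that the paper only establishes later, in \eqref{eq:6.main}. Your reduction of the operator norm to $\esssup_t\|G(t;\sigma,m)\|_{\cl B(X)}$ over the one-dimensional slice $\{t-s=m\sigma\}$ is, strictly speaking, not controlled for \emph{every} $\sigma$ by the two-dimensional essential supremum in \eqref{eq:5.3}, but the paper commits exactly the same identification in the proof of Proposition \ref{StabilityProposition}, so this is an imprecision inherited from the source rather than a gap of yours.
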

%
\begin{proof}
 After the change of variables: $\tau = {\sigma n}/{m}$, one proves the first statement since
 it reduces to the estimate (\ref{T<M}):
 \begin{align*}
  \sup_{\tau \geq 0} \|T\left(\frac\tau n\right)^m\|_{\cl B(L^p(\cl I,X))} = \sup_{ {\sigma n}/{m} \geq 0}
  \|T\left(\frac\sigma m\right)^m\|_{\cl B(L^p(\cl I,X))}\leq M \ .
 \end{align*}
Setting $n =1$ we get the second statement.
\end{proof}

\section{Convergence in the strong topology}\label{sec:6}
Theorem \ref{OurEvolutionProblemUniqueSolution} yields the existence and uniqueness of a solution operator
$U(t,s)$, $(t,s)\in\Delta$, for the evolution equation \eqref{EvolutionProblem}.  This solution operator
may be approximated by the product-type formulae in different operator topologies under hypothesis from
Assumption \ref{ass:1.1} and stability conditions.

We start by the claim that the classical Trotter formula can be used to prove the \textit{strong} operator
convergence in $L^p(\cI,X)$ of the product approximants for the semigroup generated by $\cl K$.
%
\begin{thm}\label{TheoTPFstrTopInFrakX}
Let the assumptions (A1), (A3) and (A4) be satisfied.
Let $\mathcal{A}$ and $\mathcal{B}$ be the induced multiplication operators in $L^p(\cI,X)$.
Define $\cl K_0:=\overline{D_0+\mathcal{A}}$ and let $\cl K=\cl K_0+\cl B$.
\item[\;\;\rm(i)] If the operator family $\{C(t)\}_{t\in\cI}$ is stable, then
 \begin{align*}
 e^{-\tau \cl K} = \slim (e^{-\frac{\tau}{n} D_0}e^{-\frac{\tau}{n}(\cl A+\cl B)})^n=
 \slim (e^{-\frac{\tau}{n}(\cl A+\cl B)}e^{-\frac{\tau}{n} D_0})^n
 \end{align*}
in the strong operator topology uniformly in $\tau\ge 0$.

\item[\;\;\rm(ii)] If the operator family $\{B(t)\}_{t\in\cI}$
is $A$-stable, then
\begin{align*}
e^{-\tau \cl K} = \slim (e^{-\frac{\tau}{n}\cl K_0}e^{-\frac{\tau}{n}\cl B})^n=
\slim (e^{-\frac{\tau}{n}\cl B}e^{-\frac{\tau}{n}\cl K_0})^n
\end{align*}
in the strong operator topology uniformly in $\tau \ge 0$.

\end{thm}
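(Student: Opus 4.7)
The plan is to reduce both parts to the classical Trotter product formula, Proposition~\ref{ClassicalTrotter}, applied to two different splittings of $\cl K$ in $L^p(\cI,X)$.

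For part~(i), I will take $A_{\text{Trott}} := D_0$ and $B_{\text{Trott}} := \cl C$, where $\cl C$ is the multiplication operator induced by $C(t) = A + B(t)$. By Lemma~\ref{SumClAClBContractionGenerator}, $\cl C$ is a generator and coincides with $\cl A + \cl B$ on $\dom(\cl A)\cap \dom(\cl B)=\dom(\cl A)$ (the inclusion $\dom(\cl A)\subset\dom(\cl B)$ coming from Lemma~\ref{DomainsInclusionsLp} via (A3) and the estimate in Lemma~\ref{ClBClAalphaBounded}). Trotter stability of the pair $\{D_0,\cl C\}$ is exactly the content of Proposition~\ref{StabilityProposition}(i). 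The remaining hypothesis of Proposition~\ref{ClassicalTrotter} is that $\overline{D_0+\cl C}$ is a generator; this I will identify with $\cl K$ as explained in the next paragraph. For part~(ii), I take $A_{\text{Trott}} := \cl K_0$ and $B_{\text{Trott}} := \cl B$. Here Trotter stability is Proposition~\ref{StabilityProposition}(ii), and the closure condition is immediate because $\cl K = \cl K_0 + \cl B$ is already closed and generator by Proposition~\ref{KbecomesGenerator}, so $\overline{\cl K_0 + \cl B} = \cl K$.

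The main obstacle is the generator identification in part~(i): verifying that $\overline{D_0+\cl C}=\cl K$. On the domain $\dom(D_0)\cap \dom(\cl A)$ one has $D_0+\cl C=D_0+\cl A+\cl B$, and by Proposition~\ref{KbecomesGenerator} this operator agrees with $\cl K$ there. Hence $D_0+\cl C\subseteq \cl K$, so $D_0+\cl C$ is closable with $\overline{D_0+\cl C}\subseteq \cl K$. To get equality I will show that $\dom(D_0)\cap\dom(\cl A)$ is a core for $\cl K$: Voigt's bound from the proof of Proposition~\ref{KbecomesGenerator} makes $\cl B$ relatively $\cl K_0$-bounded, so the graph norms of $\cl K_0$ and $\cl K = \cl K_0+\cl B$ are equivalent on $\dom(\cl K_0)=\dom(\cl K)$. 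Since $\dom(D_0)\cap\dom(\cl A)$ is a core for $\cl K_0=\overline{D_0+\cl A}$ by Lemma~\ref{L0Properties}(ii), it is then also a core for $\cl K$, yielding $\overline{D_0+\cl C}=\cl K$. With this, Proposition~\ref{ClassicalTrotter} produces the semigroup $e^{-\tau\cl K}$ as the strong limit of $(e^{-\frac{\tau}{n}D_0}e^{-\frac{\tau}{n}\cl C})^n$.

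Finally, Proposition~\ref{ClassicalTrotter} yields strong convergence uniformly on compact $\tau$-intervals only, whereas the theorem claims uniformity on $[0,\infty)$. I will upgrade this by exploiting \emph{nilpotence}: from \eqref{CorrespondenceEvolutionSemigroupPropagatorFormula} we have $e^{-\tau\cl K}=0$ for $\tau\ge T$, and each Trotter approximant $(e^{-\frac{\tau}{n}D_0}e^{-\frac{\tau}{n}\cl C})^n$ (respectively $(e^{-\frac{\tau}{n}\cl K_0}e^{-\frac{\tau}{n}\cl B})^n$) contains $n$ right-shifts by $\tau/n$ that compose to a total shift of $\tau$, so these vanish identically on $L^p(\cI,X)$ for $\tau\ge T$ as well. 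Therefore uniform convergence on $[0,T]$ — compact — automatically extends to uniform convergence on $[0,\infty)$. The reverse-ordered limits in both parts follow directly from the Remark after Proposition~\ref{ClassicalTrotter}: Trotter stability of a pair is symmetric, so the pairs $\{\cl C,D_0\}$ and $\{\cl B,\cl K_0\}$ are equally Trotter-stable, and the same limit argument applies without modification.
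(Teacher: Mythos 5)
Your proposal is correct and follows the same route as the paper, whose proof simply cites Proposition \ref{KbecomesGenerator}, Proposition \ref{ClassicalTrotter} and Proposition \ref{StabilityProposition}. You additionally spell out two points the paper leaves implicit --- the identification $\overline{D_0+\cl C}=\cl K$ via the core/relative-boundedness argument in part (i), and the upgrade from uniformity on compacts to uniformity on $[0,\infty)$ via nilpotence --- both of which are handled correctly.
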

%
\begin{proof}
The proof follows immediately from Proposition \ref{KbecomesGenerator},
Proposition \ref{ClassicalTrotter} and Proposition \ref{StabilityProposition}.
\end{proof}

Theorem \ref{TheoTPFstrTopInFrakX} provides information about the strong
convergence of the Trotter product formula in $L^p(\cI,X)$.  Notice that two different operator
splittings of the operator $\cK$ yield in Theorem\ref{TheoTPFstrTopInFrakX} two different product approximations
(i) and (ii).

Let $\{\{U_n(t,s)\}_{(t,s))\in\gD}\}_{n\in\N}$ be the operator family defined by
\eqref{ApproximationPropagatorIntroduction} and let $\{\cl U(\tau)\}_{\tau\geq0}$ be the semigroup
generated by $\cl K$, i.e., $e^{-\tau \cl K}=e^{-\tau(\cl B+\cl K_0)}=\cl U(\tau)$.
Then for any $f\in  L^p(\cI,X)$ one gets
\begin{align*}
 ((e^{-\frac \tau n \cl B}e^{-\frac \tau n \cl K_0})^nf)(t)= U_n(t,t-\tau)\chi_\cI(t-\tau)f(t-\tau),
 \quad  \quad t\in \cI \ .
\end{align*}
Since
\begin{align*}
 (e^{-\tau \cl K}f)(t)=(e^{-\tau(\cl B+\cl K_0)}f)(s)=(\cl U(\tau)f)(s)=U(t,t-\tau)\chi_\cI(t-\tau)f(s-\tau) \ ,
\end{align*}
we conclude that
\begin{align}\label{eq:6.main}
 (\{(e^{-\frac \tau n \cl B}e^{-\frac \tau n \cl K_0})^n&-e^{-\tau(\cl B+\cl K_0)}\}f)(t)
 =\{U_n(t,t-\tau)-U(t,t-\tau)\}\chi_\cI(t-\tau)f(t-\tau) \ .
\end{align}
Note that the product formula in a different order yields
\begin{align*}
 (\{(e^{-\frac{\tau}{n}\cl K_0}e^{-\frac{\tau}{n}\cl B})^n&-e^{-\tau \cl K}\}f)(t)=\left\{U^\prime_n(t,t-\tau)-U(t,t-\tau)\right\}\chi_\cI(t-\tau)f(t-\tau) \ ,
\end{align*}
where the approximating propagator is given by
\begin{equation}\label{Un2}
\begin{split}
 U^\prime_n(t,s) &:=\prod^{n-1\leftarrow}_{j=0} G^\prime_j(t,s\,;n), \quad n =1,2,\ldots\;,\\
G^\prime_j(t,s\,;n) &:= e^{-\frac{t-s} n A}e^{-\frac{t-s}{n} B(s+j\frac{t-s}{n})}, \quad j = 0,1,2,\ldots,n \ .
\end{split}
\end{equation}
Note that for the case of Theorem \ref{TheoTPFstrTopInFrakX} (i) the Trotter product approximations
get the form
\begin{equation}\label{Vn1}
 V_n(t,s)  := \prod^{n\leftarrow}_{j=1} e^{-\frac{t-s} n C(s+ j\frac{t-s}{n})}
\quad \mbox{and}\quad
 V^\prime_n(t,s)  := \prod^{n-1\leftarrow}_{j=0} e^{-\frac{t-s} n C(s+j\frac{t-s}{n})} \ ,
\end{equation}
for $(t,s) \in \gD$,  $n = 1,2,\ldots$ and $C(t)=A+B(t)$.

\begin{thm}\label{TheoTPFstrTopInX}
Let the assumptions (A1), (A3) and (A4) be satisfied.

\item[\rm\;\;(i)] If the family $\{C(t)\}_{t\in \cI}$ is stable, then
\begin{equation*}
\begin{split}
0 &= \lim_{n\to\infty}\sup_{\tau\in\cI}\int_0^{T-\tau}ds \, \|\{V_n(s+\tau,s)-U(s+\tau,s)\}x\|^p_X \\
&= \lim_{n\to\infty}\sup_{\tau\in\cI}\int_0^{T-\tau}ds\, \|\{V^\prime_n((s+\tau,s)-U(s+\tau,s)\}x\|^p_X \ ,
\end{split}
\end{equation*}
for any $p\in[1,\infty)$ and $x \in X$, where the families $\{\{V_n(t,s)\}_{(t,s)\in\gD}\}_{n\in\N}$ and
$\{\{V^\prime_n(t,s)\}_{(t,s)\in\gD}\}_{n\in\N}$
are defined by \eqref{Vn1}.

\item[\rm\;\;(ii)] If the family $\{B(t)\}_{t\in\cI}$ is $A$-stable, then
\begin{equation}
\begin{split}
0 &= \lim_{n\to\infty}\, \sup_{\tau\in\cI}\int_0^{T-\tau}ds\|\{U_n(s+\tau,s)-U(s+\tau,s)\}x\|^p_X \\
&=\lim_{n\to\infty}\, \sup_{\tau\in\cI}\int_0^{T-\tau}ds\|\{U^\prime_n(s+\tau,s)-U(s+\tau,s)\}x\|^p_X \ ,
\end{split}
\end{equation}
for any $p\in[1,\infty)$ and $x \in X$, where the families $\{\{U_n(t,s)\}_{(t,s)\in\gD}\}_{n\in\N}$ and
$\{\{U'_n(t,s)\}_{(t,s)\in\gD}\}_{n\in\N}$ are defined, respectively, by
\eqref{ApproximationPropagatorIntroduction}  and \eqref{Un2}.
\end{thm}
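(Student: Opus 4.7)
First, I would record four pointwise-in-$t$ identities, one for each Trotter product appearing in the statement. For the two products associated with the splitting $\cl K = \cl K_0 + \cl B$, these are exactly \eqref{eq:6.main} and the identity displayed immediately after it, so no work is required. For the two products from the splitting $\cl K = \overline{D_0 + \cl C}$ with $\cl C = \cl A + \cl B$, the analogous formulae
\begin{align*}
\bigl((e^{-\tfrac{\tau}{n}\cl C}e^{-\tfrac{\tau}{n} D_0})^n f\bigr)(t) &= V_n(t,t-\tau)\,\chi_\cI(t-\tau)\,f(t-\tau),\\
\bigl((e^{-\tfrac{\tau}{n} D_0}e^{-\tfrac{\tau}{n}\cl C})^n f\bigr)(t) &= V'_n(t,t-\tau)\,\chi_\cI(t-\tau)\,f(t-\tau),
\end{align*}
follow by iteratively combining the explicit formula for the right-shift semigroup $e^{-sD_0}$ with Theorem \ref{InducedGeneratorLp}, which asserts that $e^{-s\cl C}$ acts pointwise as multiplication by $e^{-sC(\cdot)}$. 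This is exactly the computation already carried out in the proof of Proposition \ref{StabilityProposition}.

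Next, fix $x\in X$ and take as test function the constant $f_x(t):=x$, which lies in $L^p(\cI,X)$ because $\cI$ has finite measure. Subtracting the identity \eqref{CorrespondenceEvolutionSemigroupPropagatorFormula} for $e^{-\tau\cl K}f_x$ from each of the four preceding displays, taking $\|\cdot\|_X^p$, integrating over $t\in\cI$, and performing the change of variable $s=t-\tau$ (the characteristic function $\chi_\cI(t-\tau)$ restricts the domain to $t\in[\tau,T]$, i.e. $s\in[0,T-\tau]$), I obtain
$$
\bigl\|(\mathrm{product})_n f_x - e^{-\tau\cl K} f_x\bigr\|_{L^p(\cI,X)}^{p}
= \int_0^{T-\tau}\!\|\{W_n(s+\tau,s)-U(s+\tau,s)\}x\|_X^p\,ds,
$$
where $W_n$ is the corresponding approximating propagator from the list $U_n,U'_n,V_n,V'_n$ and $(\mathrm{product})_n$ is the matching Trotter product.

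The conclusion is then read off from Theorem \ref{TheoTPFstrTopInFrakX}. In case (i), the stability of $\{C(t)\}_{t\in\cI}$ together with Proposition \ref{StabilityProposition}(i) supplies Trotter-stability of the pair $\{D_0,\cl C\}$, and Theorem \ref{TheoTPFstrTopInFrakX}(i) yields the strong convergence of both $(e^{-\tau/n\cl C}e^{-\tau/n D_0})^n$ and $(e^{-\tau/n D_0}e^{-\tau/n\cl C})^n$ to $e^{-\tau\cl K}$ in $L^p(\cI,X)$, \emph{uniformly in $\tau\ge 0$}. Case (ii) is identical, using Proposition \ref{StabilityProposition}(ii) to obtain Trotter-stability of $\{\cl K_0,\cl B\}$ and Theorem \ref{TheoTPFstrTopInFrakX}(ii) for the two $U$-ordered products. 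Applying these uniform strong convergences to $f_x$ and using the displayed identity, the left-hand side tends to $0$ uniformly in $\tau\in\cI$, and taking the supremum over $\tau\in\cI$ of the right-hand side proves the claim.

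The only point that deserves any attention is the uniformity in $\tau$ in Theorem \ref{TheoTPFstrTopInFrakX}: without it, one could not interchange $\sup_{\tau\in\cI}$ with $\lim_{n\to\infty}$, and the statement would degenerate to a pointwise-in-$\tau$ assertion. Since that uniformity is an explicit part of Theorem \ref{TheoTPFstrTopInFrakX}, no obstacle arises, and the present theorem is a clean reduction of the $L^p$-level strong Trotter convergence to its propagator counterpart, with the constant-function trick doing all the transfer work.
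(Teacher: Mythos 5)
Your proposal is correct and follows essentially the same route as the paper's proof: the paper also tests the $L^p(\cI,X)$-level Trotter difference against $f=\phi\otimes x$ with $\phi\equiv 1$, uses the pointwise identity \eqref{eq:6.main} (and its analogues) to convert the $L^p$-norm into the integral $\int_0^{T-\tau}\|\{U_n(s+\tau,s)-U(s+\tau,s)\}x\|_X^p\,ds$, and then invokes the uniform-in-$\tau$ strong convergence of Theorem \ref{TheoTPFstrTopInFrakX}. Your explicit attention to the uniformity in $\tau$ and to the derivation of the $V_n$, $V'_n$ identities is a sound elaboration of what the paper leaves as ``proved similarly.''
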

%
\begin{proof}
We prove only the statement for $\{\{U_n(t,s)\}_{(t,s)\in \gD}\}_{n\in\N}$. The other statements
can be proved similarly.

Take $f=\phi \otimes x\in L^p(\cI)\otimes X \cong L^p(\cI, X)$ for $x\in X$ and $\phi\in L^p(\cI)$.
Then, we have
\begin{align*}
 \|&\left((e^{-\frac{\tau}{n}\cl B}e^{-\frac{\tau}{n}\cl K_0})^n-e^{-\tau K}\right)f\|^p_{L^p} =  \int_0^T ds\, \|\{U_n(s,s-\tau)-U(s,s-\tau)\}\chi_\cI(s-\tau)f(s-\tau)\|^p_X = \\
 & = \int_0^{T-\tau}ds\, \|\{U_n(s+\tau,s)-U(s+\tau,s)\}f(s)\|^p_X = \int_0^{T-\tau}ds\, |\phi(s)|^p\|\{U_n(s+\tau,s)-U(s+\tau,s)\}x\|^p_X ,
\end{align*}
$\gt \in \cI$, which proves the claim if $\phi(t)=1$ a.e. in $\cI$.
\end{proof}

\begin{rem}
 We note that the corresponding convergences in Theorem \ref{TheoTPFstrTopInFrakX} and in Theorem
 \ref{TheoTPFstrTopInX} are equivalent.
\end{rem}

\section{Convergence in the operator-norm topology} \label{sec:7}

In \S \ref{sec:7} we proved the convergence of the product approximants $U_n(t,s)$ to solution operator
$U(t,s)$ in the strong operator topology. In applicatons, a convergence in the operator-norm topology
is more useful, especially if the rate of convergence can be estimated. Then in contrast to analysis
in \cite{Batkai2011} we obtain a vector-independent estimate of accuracy of the solution approximation by
the product formulae.

In this section, we want to estimate the convergence-rate of for
$\sup_{(t,s)\in\gD}\|U(t,s)-U_n(t,s)\|_{\cl B(X)} \longrightarrow 0$, when $n\to \infty$.
An important ingredient for that is to estimate the error bound for the Trotter product formula
approximation of the evolution semigroup:
$\sup_{\gt \ge 0}\|(e^{-\tau\cK_0/n}e^{-\tau\cB/n})^n-e^{-\tau\cl K}\|_{\cl B( L^p(\cI,X))}
\longrightarrow 0$, when $n\to \infty$.

\subsection{Technical Lemmata}\label{sec:7.1}

Here we state and we prove all technical lemmata that we need for demonstration of convergence and
for estimate of the error bound for the Trotter product formula approximations in the operator-norm in
$ L^p(\cI,X)$.
%
\begin{lem}
Assume (A1), (A2) and (A5).
 Then, the operator $\overline{\cl A^{-1}\cl B}$ is  bounded on $L^p(\cI,X)$ and the norm
 $\|\overline{\cl A^{-1}\cl B}\|_{\cl B( L^p(\cI,X))}\leq C_1^*$, $p \in [1,\infty)$.
\end{lem}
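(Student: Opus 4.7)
The plan is to reduce the estimate to a pointwise one (in $t$) via duality, and then lift it to $L^p(\cI,X)$ by integration. The main tool is assumption (A5), which bounds $B(t)^*(A^*)^{-1}$ uniformly in $t$; after taking adjoints, this should yield an equivalent bound on $A^{-1}B(t)$.

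First I would fix a full-measure set $\cI'\subset\cI$ on which the a.e.\ statements of (A2) and (A5) hold, and prove that for every $t\in\cI'$ and every $x\in\dom(B(t))$ one has $\|A^{-1}B(t)x\|_X\leq C_1^*\|x\|_X$. Since $0\in\varrho(A)$ by (A1), $A^{-1}\in\cl B(X)$ and therefore $(A^{-1})^*=(A^*)^{-1}$; hence for an arbitrary $\ell\in X^*$,
\begin{align*}
\la A^{-1}B(t)x,\ell\ra=\la B(t)x,(A^*)^{-1}\ell\ra.
\end{align*}
Assumption (A5) gives $(A^*)^{-1}\ell\in\dom(A^*)\subset\dom(B(t)^*)$, so the defining relation for $B(t)^*$ applies and yields
\begin{align*}
|\la B(t)x,(A^*)^{-1}\ell\ra|=|\la x,B(t)^*(A^*)^{-1}\ell\ra|\leq C_1^*\,\|x\|_X\,\|\ell\|_{X^*}.
\end{align*}
A Hahn–Banach supremum over $\|\ell\|_{X^*}\leq 1$ gives the claimed pointwise estimate, and since $\dom(B(t))$ is dense in $X$, $A^{-1}B(t)$ extends by continuity to a bounded operator on $X$ with norm at most $C_1^*$.

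Next I would lift to the $L^p$-setting. For $f\in\dom(\cl B)$ one has $f(t)\in\dom(B(t))$ a.e., so $(\cl A^{-1}\cl B f)(t)=A^{-1}B(t)f(t)$ for a.e.\ $t$. The pointwise bound yields $\|(\cl A^{-1}\cl B f)(t)\|_X\leq C_1^*\|f(t)\|_X$ a.e., and integrating $p$-th powers gives $\|\cl A^{-1}\cl B f\|_{L^p}\leq C_1^*\|f\|_{L^p}$. Thus $\cl A^{-1}\cl B$ is bounded on its domain by $C_1^*$, so its closure is bounded on the $L^p$-closure of $\dom(\cl B)$ with the same bound.

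The only subtle point is that the statement claims boundedness on all of $L^p(\cI,X)$, which requires $\dom(\cl B)$ to be dense; this is available in the context of the paper (for instance via (A4) and Theorem~\ref{InducedGeneratorLp}, since $\cl B$ is a densely defined generator there), and under that density the bounded extension $\overline{\cl A^{-1}\cl B}\in\cl B(L^p(\cI,X))$ exists with $\|\overline{\cl A^{-1}\cl B}\|\leq C_1^*$. The main (mildly delicate) step is the duality computation above, in particular the clean identification $(A^{-1})^*=(A^*)^{-1}$ and the legitimacy of passing $B(t)^*$ to the first factor via (A5); everything else is routine integration.
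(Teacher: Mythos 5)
Your proof is correct and follows essentially the same route as the paper's: the key step in both is the duality computation $\la A^{-1}B(t)x,\ell\ra=\la x,B(t)^*(A^*)^{-1}\ell\ra$ combined with (A5) to obtain the uniform pointwise bound $\|A^{-1}B(t)x\|_X\le C_1^*\|x\|_X$. The only minor differences are cosmetic: you lift to $L^p(\cI,X)$ by integrating the pointwise bound directly (and work on all of $\dom(B(t))$ rather than on $\dom(A)$ followed by density), whereas the paper routes the lift through its characterisation of the dual space $L^p(\cI,X)^*$; and you explicitly flag the density of $\dom(\cl B)$ needed for the closure to be everywhere defined, a point the paper leaves implicit.
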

%
\begin{proof}
 Let $x\in \dom(A)\subset \dom(B(t))$ and $\xi \in X^*$. Then, it holds that
 \begin{align*}
  |\la A^{-1}B(t)x, \xi \ra| = |\la x, B(t)^*(A^{-1})^*\xi \ra| \leq C_\ga^* \|x\|\,\|\xi\|,
  {\rm ~~a.e. ~t\in\cI}.
 \end{align*}
 Since $\dom(A)\subset X$ is dense, we conclude that ${\rm ~ess~sup}_{t\in \cI}
 \|\overline{A^{-1}B(t)}\|_{\cl B(X)}\leq C_1^*$.
Let $\Gamma \in L^p(\cI;X)^*$.  By Proposition \ref{CharaterizationLpdual} (iii) we find
\begin{align*}
\Gamma(\cl A^{-1}\cl Bf) = \int_{\cI}\la A^{-1}B(t)f(t),\Psi(t)\ra dt, \quad f \in \dom(\cl B).
\end{align*}
Then the estimate ${\rm ~ess~sup}_{t\in \cI}\|\overline{A^{-1}B(t)}\|_{\cl B(X)}\leq C_1^*$
implies
\begin{align*}
|\Gamma(\cl A^{-1}\cl Bf) | \le C^*_1 \|f\|_{L^p(\cI,X)}\left(\int_{\cI}\|\Psi(t)\|^{p'} dt\right)^{1/p'},
\end{align*}
$f \in \dom(\cl B)$, $\Gamma \in L^p(\cI,X)^*$, which yields
\begin{align*}
|\Gamma(\cl A^{-1}\cl Bf) | \le C^*_1 \|f\|_{L^p(\cI,X)} \|\Gamma\|_{L^p(\cI,X)^*}, \quad f \in \dom(\cl B).
\end{align*}
Hence we get $\|\cl A^{-1}\cl Bf\|_{L^p(\cI,X)} \le C^*_1\|f\|_{L^p(\cI,X)}$, $f \in \dom(\cl B)$,
which proves the claim.
\end{proof}

\begin{rem}
By assumption  $(A1)$ the operator $\cl A$ generates a holomorphic semigroup. Note that the operator
$\cl K_0$ is not a
generator of a holomorphic semigroup. Indeed, we have
 \begin{align*}
  (e^{-\tau \cl K_0}f)(t)=(e^{-\tau D_0}e^{-\tau A}f)(t)= e^{-\tau A}f(t-\tau)\chi_\cI(t-\tau),
  \quad f \in L^p(\cI,X) \ .
 \end{align*}
 Since the right-hand side is zero for $\tau\geq t$, the semigroup has no analytic extended to the complex
 plane $\mathbb{C}$.
\end{rem}

We comment that in general, $\dom(\cl K_0)\subset\dom(\cl A)$ does not hold, but we can prove the
following inclusion.
%
\begin{lem}\label{L0AalphaDomainsInclusions}
Let the assumption (A1) be satisfied. Then for $\alpha\in[0,1)$ one gets that
 \begin{equation}\label{eq:6.1}
\dom(\cl K_0)\subset\dom(\cl A^\alpha) \ .
 \end{equation}
\end{lem}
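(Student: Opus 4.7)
The plan is to use the explicit product representation $e^{-\tau \cl K_0}=e^{-\tau D_0}e^{-\tau\cl A}$ together with the standard holomorphic-semigroup bound on $\cl A^\alpha e^{-\tau\cl A}$ from Proposition \ref{HolomorphicSemigroupEstimate}, and then apply $\cl A^\alpha$ to the Laplace-transform representation of the resolvent of $\cl K_0$. The case $\alpha=0$ is trivial since $\cl A^0=I$, so I only treat $\alpha\in(0,1)$.

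First, by Lemma \ref{L0Properties}(iii) the operator $\cl K_0$ lies in the same class $\cG(M_A,0)$ as $\cl A$, so for any $\mu>0$ the resolvent $(\mu+\cl K_0)^{-1}$ is bounded and admits the Laplace representation
\begin{equation*}
(\mu+\cl K_0)^{-1}g=\int_0^\infty e^{-\mu\tau}e^{-\tau D_0}e^{-\tau\cl A}g\,d\tau,\qquad g\in L^p(\cI,X).
\end{equation*}
Given $f\in\dom(\cl K_0)$, I set $g:=(\mu+\cl K_0)f$, so $f=(\mu+\cl K_0)^{-1}g$. The strategy now is to pull $\cl A^\alpha$ under the integral sign and invoke closedness of $\cl A^\alpha$ to conclude $f\in\dom(\cl A^\alpha)$.

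The key ingredients to push $\cl A^\alpha$ inside are: (a) for every $\tau>0$ one has $e^{-\tau\cl A}g\in\dom(\cl A^k)$ for all $k$, because $\cl A$ generates a bounded holomorphic semigroup on $L^p(\cI,X)$; (b) the shift $e^{-\tau D_0}$ commutes with $e^{-t\cl A}$ for each $t$ (as used already in \S\ref{sec:3.2}), and therefore, via the Bochner-integral definition $\cl A^{-\alpha}=\Gamma(\alpha)^{-1}\int_0^\infty t^{\alpha-1}e^{-t\cl A}dt$, it commutes with $\cl A^{-\alpha}$, hence with $\cl A^\alpha=(\cl A^{-\alpha})^{-1}$ on its domain; (c) Proposition \ref{HolomorphicSemigroupEstimate} applied to $\cl A$ yields
\begin{equation*}
\|\cl A^\alpha e^{-\tau\cl A}\|_{\cl B(L^p(\cI,X))}\le \frac{M^A_\alpha}{\tau^\alpha},\qquad \tau>0.
\end{equation*}
Together with $\|e^{-\tau D_0}\|\le 1$, the integrand $e^{-\mu\tau}e^{-\tau D_0}\cl A^\alpha e^{-\tau\cl A}g$ is Bochner-integrable on $(0,\infty)$ because $\int_0^\infty e^{-\mu\tau}\tau^{-\alpha}d\tau<\infty$ (convergence at $0$ uses $\alpha<1$, at $\infty$ uses $\mu>0$). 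Closedness of $\cl A^\alpha$ then gives $f\in\dom(\cl A^\alpha)$ and
\begin{equation*}
\cl A^\alpha f=\int_0^\infty e^{-\mu\tau}e^{-\tau D_0}\cl A^\alpha e^{-\tau\cl A}g\,d\tau,
\end{equation*}
proving the inclusion \eqref{eq:6.1}.

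The only mildly technical step is the commutation assertion (b) combined with the closedness argument; once one observes that $e^{-\tau D_0}$ preserves $\dom(\cl A^\alpha)$ and commutes with $\cl A^\alpha$ on this domain, everything else is a direct estimate. I do not expect a real obstacle here; the substance of the lemma is contained in the holomorphic bound of Proposition \ref{HolomorphicSemigroupEstimate}, which is precisely the reason assumption (A1) is imposed.
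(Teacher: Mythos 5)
Your proof is correct and follows essentially the same route as the paper: both represent elements of $\dom(\cl K_0)$ via the Laplace-transform integral of $e^{-\tau D_0}e^{-\tau\cl A}$, commute the shift past $\cl A^\alpha$, and use the holomorphic bound $\|\cl A^\alpha e^{-\tau\cl A}\|\le M^A_\alpha\tau^{-\alpha}$ with $\alpha<1$ to get integrability near $\tau=0$. The only cosmetic difference is that the paper exploits nilpotency of $e^{-\tau\cl K_0}$ to use $\cl K_0^{-1}=\int_0^T e^{-\tau\cl K_0}\,d\tau$ directly, whereas you use $(\mu+\cl K_0)^{-1}$ with $\mu>0$ and obtain convergence at infinity from the weight $e^{-\mu\tau}$.
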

%
\begin{proof}
We know that the semigroup, which is generated by $\cl K_0$, is nilpotent, i.e. $e^{-\tau \cl K_0}=0$ holds for
$\tau\geq T$. Hence, generator $\cl K_0$ has empty spectrum. Since the semigroup is nilpotent, one gets
\begin{align*}
 \cl A^\alpha \cl K_0^{-1}f=\cl A^\alpha \int_0^\infty e^{-\tau \cl K_0}f d\tau= \int_0^T
 e^{-\tau D_0}\cl A^\alpha e^{-\tau \cl A}f d\tau, \quad f \in L^p(\cI,X) \ .
\end{align*}
For $\alpha\in[0,1)$ and $\tau>0$, we obtain $\|\cA^\alpha e^{-\tau \cl A}\|\leq {M^A_\alpha }/{\tau^\alpha}$ and
in addition:
\begin{align*}
\int_0^T \tau^{-\alpha}d\tau = \frac{T^{1-\alpha}}{1-\alpha}<\infty \ .
\end{align*}
So, the integrand $\cl A^\alpha e^{-\tau D_0}e^{-\tau \cl A}f$ for $\tau>0$ is integrable on
$[0,\infty)$, that implies the claim.
\end{proof}
%
\begin{lem}\label{TechnicalLemma1}
Let the assumptions (A1), (A2), and (A3) be satisfied.
Then there is a constant $\gL_\ga > 0$ such that
 \begin{equation}\label{eq:6.2}
  \|\cl A^\alpha e^{-\tau \cl K}\|_{\cl B( L^p(\cI,X))}\leq \frac{\gL_\ga}{\tau^\alpha}, \quad \gt > 0.
 \end{equation}
\end{lem}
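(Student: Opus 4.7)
The plan is to combine the Dyson--Phillips expansion of $e^{-\tau\cK}$ around $e^{-\tau\cK_0}$ with an $\cA^\alpha$-smoothing estimate for the unperturbed evolution semigroup. Two ingredients are crucial: first, since $e^{-\tau\cK_0}=e^{-\tau D_0}e^{-\tau\cA}$ and $\cA^\alpha$ commutes with the contractive right-shift $e^{-\tau D_0}$ on $L^p(\cI,X)$, Proposition~\ref{HolomorphicSemigroupEstimate} applied to $A$ gives
\begin{equation*}
 \|\cA^\alpha e^{-t\cK_0}\|_{\cl B(L^p(\cI,X))} \le \|A^\alpha e^{-tA}\|_{\cl B(X)} \le \frac{M^A_\alpha}{t^\alpha}, \quad t>0;
\end{equation*}
second, by Lemma~\ref{ClBClAalphaBounded}, $\cB\cA^{-\alpha}$ extends to a bounded operator of norm at most $C_\alpha$.

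First, I would write the Duhamel identity
\begin{equation*}
 e^{-\tau\cK}f = e^{-\tau\cK_0}f - \int_0^\tau e^{-(\tau-s)\cK_0}\,\cB\, e^{-s\cK}f\,ds,\quad f\in\dom(\cK_0),
\end{equation*}
and iterate it to obtain the Dyson--Phillips series $e^{-\tau\cK}=\sum_{n=0}^\infty V_n(\tau)$ with
\begin{equation*}
 V_n(\tau) = (-1)^n \int_{\Delta_n(\tau)} e^{-(\tau-s_1)\cK_0}\cB\,e^{-(s_1-s_2)\cK_0}\cB\cdots\cB\,e^{-s_n\cK_0}\,ds_1\cdots ds_n,
\end{equation*}
over the simplex $\Delta_n(\tau)=\{0\le s_n\le\cdots\le s_1\le\tau\}$. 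To bring $\cA^\alpha$ into the picture, I would insert $\cA^{-\alpha}\cA^\alpha=I$ on the range of each copy of $\cB$ and commute every newly produced $\cA^\alpha$ across the semigroup factor to its right, which is legitimate on a core because $\cA^\alpha$ and $e^{-t\cK_0}$ commute. The integrand is thereby regrouped as an alternating product of $n+1$ operators of type $\cA^\alpha e^{-t_k\cK_0}$ and $n$ bounded operators $\cB\cA^{-\alpha}$.

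Second, applying the two norm bounds termwise yields
\begin{equation*}
 \|\cA^\alpha V_n(\tau)\|_{\cl B(L^p(\cI,X))} \le (M^A_\alpha)^{n+1}\,C_\alpha^n \int_{\Delta_n(\tau)}\frac{ds_1\cdots ds_n}{(\tau-s_1)^\alpha(s_1-s_2)^\alpha\cdots s_n^\alpha},
\end{equation*}
and the iterated Dirichlet integral equals $\tau^{n(1-\alpha)-\alpha}\,\Gamma(1-\alpha)^{n+1}/\Gamma((n+1)(1-\alpha))$. Summing over $n$, and reducing to $\tau\in(0,T]$ (for $\tau>T$ the semigroup vanishes by nilpotency, so the bound is trivial), I would obtain
\begin{equation*}
 \|\cA^\alpha e^{-\tau\cK}\|_{\cl B(L^p(\cI,X))} \le \frac{\Lambda_\alpha}{\tau^\alpha}, \quad \Lambda_\alpha := M^A_\alpha\,\Gamma(1-\alpha)\sum_{n=0}^\infty\frac{\bigl(M^A_\alpha C_\alpha\,\Gamma(1-\alpha)\,T^{1-\alpha}\bigr)^n}{\Gamma\bigl((n+1)(1-\alpha)\bigr)},
\end{equation*}
the series being convergent by Stirling's formula, since $\alpha<1$.

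The main obstacle I anticipate is the rigorous setup of the Dyson--Phillips construction: $\cB$ is only relatively $\cA^\alpha$-bounded and generally unbounded, so the integrand defining $V_n(\tau)$ must first be interpreted on the core $\dom(\cK_0)=\dom(\cK)$ (cf.\ Proposition~\ref{KbecomesGenerator}) and then extended by the uniform estimate above to a bounded operator on $L^p(\cI,X)$. Operator-norm convergence of the series on $\tau\in(0,T]$ follows from those estimates, and identification of its sum with $e^{-\tau\cK}$ comes from uniqueness of the semigroup generated by $\cK=\cK_0+\cB$ (Voigt's theorem, Proposition~\ref{VoigtsTheorem}).
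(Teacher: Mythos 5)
Your proposal is correct, and the two crucial ingredients are exactly those the paper uses: the smoothing bound $\|\cA^\ga e^{-t\cK_0}\|\le M^A_\ga t^{-\ga}$ (via commutation with the contractive shift) and $\|\cB\cA^{-\ga}\|\le C_\ga$ from Lemma \ref{ClBClAalphaBounded}. Where you diverge is in how the Duhamel identity is closed. The paper performs a \emph{single} Duhamel step, sets $F(\gt):=\|\cA^\ga e^{-\gt\cK}f\|$, derives the singular Volterra inequality $F(\gt)\le c_1\gt^{-\ga}+c_2\int_0^\gt F(\gs)(\gt-\gs)^{-\ga}\,d\gs$, and invokes the Gronwall-type Lemma \ref{GronwallLemma}; since that lemma only yields the bound on a small interval $(0,\gt_0]$, the paper then needs a second, patching step, writing $e^{-\gt\cK}=e^{-\gt_0\cK}e^{-(\gt-\gt_0)\cK}$ and using uniform boundedness of the evolution semigroup to cover $(\gt_0,T]$. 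You instead iterate Duhamel to the full Dyson--Phillips series and sum a Mittag-Leffler-type series of Beta-function integrals, which gives the bound on all of $(0,T]$ in one stroke with an explicit constant and no patching. The price is the extra bookkeeping you yourself flag: one must check that each $V_n(\gt)$ is a well-defined Bochner integral (this works because $\ran(e^{-s\cK_0})\subset\dom(\cA^\ga)\subset\dom(\cB)$ for $s>0$, with strong continuity of $s\mapsto\cB e^{-s\cK_0}f$ established in the proof of Proposition \ref{KbecomesGenerator}), and that the series actually sums to $e^{-\gt\cK}$. For the latter, appealing to ``uniqueness of the semigroup'' alone is slightly short --- one must first know the series defines a semigroup with generator $\cK$ --- but this is precisely the content of the Miyadera--Voigt construction underlying Proposition \ref{VoigtsTheorem} (whose hypothesis is verified in Proposition \ref{KbecomesGenerator}), or can be avoided altogether by iterating Duhamel finitely many times and bounding the remainder. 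So the argument is sound; it is essentially an explicit, quantitative unwinding of the iteration that the paper's Gronwall lemma performs abstractly.
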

%
\begin{proof}
Note that the following holds
 \begin{align*}
  \frac{d}{d\sigma} e^{-(\tau-\sigma) \cl K_0}e^{-\sigma \cl K}f &= e^{-(\tau-\sigma)\cl K_0}\cl
  K_0e^{-\sigma \cl K}f + e^{-(\tau-\sigma)\cl K_0}(-\cl K)e^{-\sigma \cl K}f=-e^{-(\tau-\sigma)\cl K_0}\cl Be^{-\sigma \cl K}f \ .
 \end{align*}
So, we get
\begin{align*}
 \int_0^\tau e^{-(\tau-\sigma)\cl K_0}\cl Be^{-\sigma \cl K}f d\sigma=e^{-\tau \cl K_0}f-e^{-\tau \cl K}f \ ,
\end{align*}
and hence
\begin{align*}
 \cl A^\alpha e^{-\tau \cl K} f = \cl A^\alpha e^{-\tau \cl K_0}f - \cl A^\alpha \int_0^\tau
 e^{-(\tau-\sigma)\cl K_0}\cl Be^{-\sigma \cl K}f d\sigma \ .
\end{align*}
Now, we estimate the two terms in the right-hand side. First we find that
\begin{align*}
 \|\cl A^\alpha e^{-\tau \cl K_0}f\|_{L^p(\cI,X)}\leq
 \|A^\alpha e^{-\tau A}f(\cdot-\tau)\|_{L^p(\cI,X)}\leq
 \frac{M^A_\alpha}{\tau^\alpha}\|f\|_{L^p(\cI,X)} \ .
\end{align*}
Now, let $f\in\dom(K)$. Since $\dom(\cl K)=\dom(\cl K_0)\subset\dom(\cl A^\alpha)$ (see Lemma
\ref{L0AalphaDomainsInclusions}), one gets
\begin{align*}
\cl A^\alpha\int_0^\tau d\sigma \, e^{-(\tau-\sigma)\cl K_0}\cl Be^{-\sigma \cl K} f
=\int_0^\tau d\sigma\,  A^\alpha e^{-(\tau-\sigma)\cl K_0}\cl B \cl A ^{-\alpha} \cl A ^\alpha
e^{-\sigma \cl K} f \ .
\end{align*}
There is a constant $C_\alpha>0$, such that $\|\cl B \cl A^{-\alpha}\|_{\cl B( L^p(\cI,X))}\leq C_\alpha$
(cf. Lemma \ref{ClBClAalphaBounded}). Then we find the estimate for the sum of two terms:
\begin{align*}
 &\|\cl A^\alpha e^{-\tau \cl K}f\|_{L^p(\cI,X)} \leq \\ 
 &\leq \frac{M^A_\alpha}{\tau^\alpha}\|f\|_{L^p(\cI,X)} +
 +\int_0^\tau  \|A^\alpha e^{-(\tau-\sigma)\cl K_0}\|_{\cl B( L^p(\cI,X))}\cdot \|\cl B \cl A ^{-\alpha}
 \|_{\cl B( L^p(\cI,X))}\cdot\| \cl A ^\alpha e^{-\sigma K} f \|_{L^p(\cI,X)} d\sigma \leq \\
 &\leq \frac{M^A_\alpha}{\tau^\alpha}\|f\|_{L^p(\cI,X)} +  C_\alpha\int_0^\tau \frac{M^A_\alpha}
 {(\tau-\sigma)^\alpha}\|\cl A^\alpha e^{-\sigma \cl K}f\|_ {L^p(\cI,X)} d\sigma \ .
\end{align*}
Let $\|f\|_{L^p(\cI,X)} \le 1$ and we introduce $F(\gt) := \|\cA^\ga e^{-\gt \cK}f\|_{L^p(\cI,X)}$. Then the
Gronwall-type inequality
\begin{align*}
 0\leq F(\tau) \leq c_1 \tau^{-\alpha}+c_2\int_0^\tau F(\sigma) (\tau-\sigma)^{-\alpha} d\sigma,
\end{align*}
is satisfied, where $c_1=M^A_\ga$ and $c_2= C_\alpha M^A_\ga$.
The Gronwall-type lemma (see Appendix, Lemma \ref{GronwallLemma}) states that the estimate
$F(\tau)\tau^\alpha\leq 2c_1$ is valid for $\tau\in [0,\tau_0]$, where
$\tau_0=\sigma_\alpha\cdot\min\left\{{1}/{c_2} ,\left({1}/{c_2}\right)^{1/(1-\alpha)}\right\}$
and
$\sigma_\alpha$ depends only on $\alpha$. Hence, we obtain
\begin{equation}\label{eq:6.3a}
 \|\cl A^\alpha e^{-\tau \cl K}f\| \leq \frac{2M^A_\alpha}{\tau^\alpha} \ ,
\end{equation}
for $\gt \in (0,\gt_0]$. Since $e^{-\gt \cK}f = 0$ for $\gt \ge T$ it remains to consider the case
$0 < \gt_0 < T$. If $\gt \in (\gt_0,T]$, then we find
\begin{displaymath}
 \|\cl A^\alpha e^{-\tau \cl K}f\| \le  \|\cl A^\alpha e^{-\tau_0 \cl K}e^{-(\gt-\gt_0)\cK}f\| \le
 \frac{2M^A_\alpha \,M_\cK}{\tau^\alpha_0} \ ,
\end{displaymath}
where $\|e^{-\gt \cK}f\| \le M_\cK$ for $\gt \ge 0$ and $\|f\|_{L^p(\cI,x)} \le 1$.  Here we have used
that any evolution semigroup is bounded. Hence
\begin{displaymath}
 \|\cl A^\alpha e^{-\tau \cl K}f\| \le  \frac{\gt^\ga}{\gt^\ga_0}\frac{2M^A_\alpha \,M_\cK}{\tau^\alpha}
 \le \frac{T^\ga}{\gt^\ga_0}\frac{2M^A_\alpha \,M_\cK}{\tau^\alpha} \ ,
\end{displaymath}
for $\gt \in (\gt_0,T]$ and $\|f\|_{L^p(\cI,X)} \le 1$. Setting $\gL_\ga :=
\max\left\{2M^A_\ga, ({2M^A_\alpha \,M_\cK T^\ga})/{\gt^\ga_0}\right\}$ and
taking the supremum over the unit ball we complete the proof.
\end{proof}
\begin{lem}\label{TechnicalLemma2}
Let the assumptions (A1), (A3) and (A4) be satisfied . If the family of generators $\{B(t)\}_{t\in\cI}$
is $A$-stable, then there exist constants $c_1 > 0$ and $c_2>0$ such that
\begin{equation}\label{eq:6.3}
 \|\overline{T(\gt)^k\cl A}\|_{\cl B( L^p(\cI,X))}\leq \frac{c_1}{\tau^\ga} +\frac{c_2}{k\tau},
 \quad \gt > 0 \ ,
 \quad k \in \N.
\end{equation}
\end{lem}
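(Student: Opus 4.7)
The plan is to split $T(\tau)^k\cl A$ into a principal term built from the semigroup $e^{-k\tau\cl K_0}$ (which will supply the $(k\tau)^{-1}$ contribution via the holomorphy of $\cl A$) and a remainder obtained from a discrete Duhamel/telescoping expansion (which will supply the $\tau^{-\alpha}$ contribution). Concretely, for $f\in\dom(\cl A)$ I would write
\[
T(\tau)^k\cl A f = e^{-k\tau\cl K_0}\cl A f + \bigl(T(\tau)^k - e^{-k\tau\cl K_0}\bigr)\cl A f
\]
and estimate the two pieces separately. The first piece is immediate: from the pointwise identity $(e^{-k\tau\cl K_0}\cl A f)(s)=A e^{-k\tau A}f(s-k\tau)\chi_\cI(s-k\tau)$ and Proposition~\ref{HolomorphicSemigroupEstimate} one obtains $\|e^{-k\tau\cl K_0}\cl A\|_{\cl B(L^p(\cI,X))}\le M^A_1/(k\tau)$, which is the $c_2/(k\tau)$ term.

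For the remainder I would use the telescoping identity
\[
T(\tau)^k - e^{-k\tau\cl K_0} \;=\; \sum_{j=0}^{k-1} T(\tau)^{k-1-j}\bigl(e^{-\tau\cl B}-I\bigr) e^{-(j+1)\tau\cl K_0},
\]
which follows from $T(\tau)-e^{-\tau\cl K_0}=(e^{-\tau\cl B}-I)e^{-\tau\cl K_0}$ together with the semigroup property of $e^{-s\cl K_0}$. Multiplying each summand on the right by $\cl A$, I would bound it as a product of three factors: first, the $A$-stability bound $\|T(\tau)^{k-1-j}\|\le M$ from Lemma~\ref{StabilityLemma}; second, the Duhamel estimate $\|(e^{-\tau\cl B}-I)h\|_{L^p}\le \tau M_B C_\alpha\|\cl A^\alpha h\|_{L^p}$, valid on $\dom(\cl A^\alpha)$ by writing $(e^{-\tau\cl B}-I)h=-\int_0^\tau e^{-s\cl B}\cl B h\,ds$ and invoking $\|\cl B\cl A^{-\alpha}\|\le C_\alpha$ from Lemma~\ref{ClBClAalphaBounded}; and third, the holomorphic smoothing $\|\cl A^{1+\alpha} e^{-(j+1)\tau\cl K_0}\|\le M^A_{1+\alpha}/((j+1)\tau)^{1+\alpha}$ from Proposition~\ref{HolomorphicSemigroupEstimate}, where I use that $\cl A$ and its fractional powers commute with $e^{-s\cl K_0}=e^{-sD_0}e^{-s\cl A}$, so that $\cl A^\alpha\cdot e^{-(j+1)\tau\cl K_0}\cl A = \cl A^{1+\alpha} e^{-(j+1)\tau\cl K_0}$.

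Multiplying the three factors yields, for each summand, a bound of the form $C\,(j+1)^{-(1+\alpha)}\tau^{-\alpha}$. Summing in $j$ and using convergence of $\sum_{j\ge 1}j^{-(1+\alpha)}$ (which is exactly where $\alpha>0$ is essential) produces $\|(T(\tau)^k-e^{-k\tau\cl K_0})\cl A\|_{\cl B(L^p(\cI,X))}\le c_1/\tau^\alpha$. Adding this to the principal-term estimate and extending by density from $\dom(\cl A)$ gives the bound for $\overline{T(\tau)^k\cl A}$. The main technical obstacle to watch out for is the bookkeeping of domains: one must check that $e^{-(j+1)\tau\cl K_0}\cl A f$ indeed lies in $\dom(\cl A^\alpha)\subset\dom(\cl B)$ so that the Duhamel bound on $(e^{-\tau\cl B}-I)$ applies, but this is ensured by the smoothing effect of the holomorphic factor $e^{-s\cl A}$ inside $e^{-s\cl K_0}$, which sends $L^p(\cI,X)$ into $\dom(\cl A^\beta)$ for every $\beta>0$.
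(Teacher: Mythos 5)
Your proposal is correct and follows essentially the same route as the paper: the same splitting of $T(\tau)^k\cl A f$ into $e^{-k\tau\cl K_0}\cl A f$ plus a telescoping sum $\sum_{j=0}^{k-1}T(\tau)^{k-(j+1)}(e^{-\tau\cl B}-I)e^{-(j+1)\tau\cl K_0}\cl A f$, with each summand bounded by the stability constant, the Duhamel bound on $e^{-\tau\cl B}-I$ via $\|\cl B\cl A^{-\alpha}\|\le C_\alpha$, and the holomorphic smoothing estimate $\|\cl A^{1+\alpha}e^{-(j+1)\tau\cl K_0}\|\le M^A_{1+\alpha}/((j+1)\tau)^{1+\alpha}$, followed by summation of $\sum_j (j+1)^{-(1+\alpha)}$ (the paper records this as $\zeta(1+\alpha)$). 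Your extra remark on the domain bookkeeping is a point the paper passes over silently, but it changes nothing in substance.
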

%
\begin{proof}
For $k\gt \ge T$ we have $T(\gt)^k = 0$. Hence, one has to prove the estimate \eqref{eq:6.3} only for
$k\gt \le T$. By Lemma \ref{StabilityLemma} we get that $\|T(\gt)^k\|_{\cl B( L^p(\cI,X))}\leq M$
for some positive constant $M$. Let $f\in\dom(\cA)$. Then
 \begin{align*}
\|T(\gt)^k\cA f\| &\leq \|(T(\gt)^k-e^{-k\tau \cl K_0})\cl Af\| + \|e^{-k\tau \cl K_0}\cA f\|\\
& \leq \|\sum_{j=0}^{k-1}T(\gt)^{k-(j+1)}(e^{-\tau \mathcal{B}}-I)e^{-(j+1)\tau \cl K_0}\cA f\|  +
\|e^{-k\tau \cl K_0}\cA f\|\\
& \leq M \sum_{j=0}^{k-1}\int_0^\tau d\sigma\|e^{-\sigma\mathcal{B}}\mathcal{B}{\cl A}^{-\alpha}\|\;
\|\cA^{1+\alpha}e^{-(j+1)\tau \cl K_0}f\| +\|e^{-k\tau \cl K_0}\cA f\|,
\end{align*}
where we used $I-e^{-\tau \cl B}=\int_0^\tau d\sigma \, \cl Be^{-\sigma\cl B}$. We have  $\|e^{-\sigma\mathcal{B}}\|\leq M_B e^{\beta_\cl B \sigma}\leq  M_B e^{\beta_c\ B T} =: M_B'$. By Proposition
\ref{HolomorphicSemigroupEstimate} we get
\begin{align*}
 \|\cA^{1+\alpha}e^{-(j+1)\tau \cl K_0}f\|&\leq\frac{M^A_{1+\ga}}{((j+1)\tau)^{1+\alpha}}\|f\| \ ,\\
 \|\cl A e^{-k\tau \cl K_0}f\|&\leq\frac{M^A_\ga}{k\tau}\|f\| \ .
\end{align*}
Therefore, one obtains the estimate:
\begin{align*}
 \|T(\gt)^k\cl Af\|
 &\leq \frac{ M M_B'M^A_{1+\alpha}  C_\alpha\tau }{\tau^{\alpha+1}}\sum_{j=0}^{k-1}
 \frac{1}{(j+1)^{\alpha+1}}\|f\|+\frac{ M^A_1}{k\tau}\|f\|\\
 &\leq \frac{MM_B' M^A_{1+\alpha} C_\alpha \zeta(\alpha+1)}{\tau^\alpha}\|f\|+\frac{M^A_1}{k\tau}\|f\| \ ,
\end{align*}
where $\zeta(\alpha+1) :=\sum_{j=1}^{\infty}{1}/{j^{\alpha+1}}$ is the Riemann $\zeta$-function.
Since $T(\gt)^k = 0$ for $\gt k \ge T$, we find
\begin{align*}
 \|T(\gt)^k\cl Af\| \leq \frac{MM_B' M^A_{1+\alpha}  C_\alpha \zeta(\alpha+1)}{\tau^\alpha}\|f\|+
 \frac{M^A_1}{k\tau}\|f\|, \quad f \in \dom(\cA) \ .
\end{align*}
Then estimate \eqref{eq:6.3} follows by taking supremum over the unit ball in $\dom(\cA)$ .
\end{proof}
%
\begin{lem}\label{TechnicalLemma3}
Let the assumptions (A1), (A3), (A4), and (A5) be satisfied. Then there is a constant $c>0$ such
that for $\tau \geq 0$ we have inequalities {{\rm :}}
\begin{equation*}
\|(T(\gt)-e^{-\tau \cl  K})\cA^{-\ga}\|_{\cl B(L^p(\cI,X))}\leq c\tau ~~{\rm and~~}
\|\cl A^{-1}(T(\gt)-e^{-\tau \cl K})\|_{\cl B(L^p(\cI,X))}\leq
c\tau \ .
\end{equation*}
\end{lem}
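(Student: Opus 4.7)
The plan is a Duhamel-type telescoping through the intermediate semigroup $e^{-\tau\cl K_0}$. Decomposing
\begin{equation*}
 T(\tau)-e^{-\tau\cl K}=\bigl(T(\tau)-e^{-\tau\cl K_0}\bigr)+\bigl(e^{-\tau\cl K_0}-e^{-\tau\cl K}\bigr),
\end{equation*}
one writes, using $e^{-\tau\cl B}-I=-\int_0^\tau e^{-s\cl B}\cl B\,ds$ on $\dom(\cl B)$ and differentiating $s\mapsto e^{-(\tau-s)\cl K_0}e^{-s\cl K}$ or $s\mapsto e^{-s\cl K}e^{-(\tau-s)\cl K_0}$,
\begin{align*}
 T(\tau)-e^{-\tau\cl K_0}&=-\int_0^\tau e^{-s\cl B}\cl B\,e^{-\tau\cl K_0}\,ds,\\
 e^{-\tau\cl K_0}-e^{-\tau\cl K}&=\int_0^\tau e^{-(\tau-s)\cl K_0}\cl B\,e^{-s\cl K}\,ds=\int_0^\tau e^{-s\cl K}\cl B\,e^{-(\tau-s)\cl K_0}\,ds.
\end{align*}
The decisive observation is that $\cl A^{-\alpha}$ for every $\alpha\in[0,1]$ commutes with the semigroup $e^{-s\cl K_0}=e^{-s\cl A}e^{-sD_0}$: trivially with $e^{-s\cl A}$, and with the shift $e^{-sD_0}$ because $\cl A$ is time-independent. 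Hence any $\cl A^{-\alpha}$ can be moved freely past every $e^{-\cdot\,\cl K_0}$ factor so as to sit adjacent to a $\cl B$.

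For the first inequality, I would use the symmetric (second) form of the second Duhamel identity and apply $\cl A^{-\alpha}$ on the right. After sliding $\cl A^{-\alpha}$ across the $e^{-\cdot\,\cl K_0}$ factors, both summands take the shape $\int_0^\tau(\text{bounded semigroup})\,(\cl B\cl A^{-\alpha})\,(\text{bounded semigroup})\,ds$, with $\|\cl B\cl A^{-\alpha}\|\le C_\alpha$ by Lemma~\ref{ClBClAalphaBounded} and with $\{e^{-s\cl B}\}$, $\{e^{-s\cl K_0}\}$ and $\{e^{-s\cl K}\}$ uniformly bounded on $[0,T]$. Integration over $s\in[0,\tau]$ then produces the bound $c\tau$.

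For the second inequality, I would use the first form of the second Duhamel identity, apply $\cl A^{-1}$ on the left, and commute it past the $e^{-\cdot\,\cl K_0}$ factors in both summands. The unbounded composition $\cl A^{-1}\cl B$ that thereby appears adjacent to $e^{-\cdot\,\cl B}$ or $e^{-\cdot\,\cl K}$ is then replaced by its bounded closure $\overline{\cl A^{-1}\cl B}$, of norm at most $C_1^*$ by the first lemma of this subsection (a consequence of assumption~(A5)). Uniform bounds on the remaining semigroup factors and integration once more yield $c\tau$. I expect the principal obstacle to be the domain bookkeeping: one must verify that $\cl B\,e^{-\tau\cl K_0}f$ is well-defined for arbitrary $f\in L^p(\cI,X)$, which follows from $\mathrm{ran}(e^{-\tau\cl A})\subset\dom(\cl A^\alpha)\subset\dom(\cl B)$ via the holomorphic regularisation combined with assumption~(A3); and one must justify replacing the unbounded $\cl A^{-1}\cl B$ by its bounded extension inside each Bochner integral, which reduces to checking that at every $s\in(0,\tau]$ the integrand applied to a generic vector lands in $\dom(\cl B)$.
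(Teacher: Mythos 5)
Your proposal is correct and is essentially the paper's own proof: the same splitting of $T(\gt)-e^{-\gt\cK}$ through the intermediate semigroup $e^{-\gt\cK_0}$, the same two Duhamel representations (chosen in the same order for each of the two inequalities), and the same commutation of $\cA^{-\ga}$, resp.\ $\cA^{-1}$, through the $e^{-\cdot\,\cK_0}$ factors so that the bounded operators $\cB\cA^{-\ga}$ (norm $\le C_\ga$) and $\overline{\cA^{-1}\cB}$ (norm $\le C_1^*$) appear. The domain issues you flag are handled in the paper at the same level of care, by working on the dense sets $\cA^{\ga}\dom(\cK_0)$ and $\dom(\cK_0)=\dom(\cK)$ and extending by continuity.
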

%
\begin{proof}
(i) Using the representation
\begin{displaymath}
T(\gt)g -e^{-\tau \cl K}g =(e^{-\tau\cl B}-I)e^{-\tau \cl K_0}g + e^{-\tau \cl K_0}-e^{-\tau \cl K}g,
\quad g \in L^p(\cI,X),
\end{displaymath}
we get
\begin{equation}\label{eq:6.6}
 \|(T(\gt)-e^{-\tau \cl K})\cl A^{-\alpha}f\|\\
 \leq \; \|(e^{-\tau\cl B}-I)\cl A^{-\alpha}e^{-\tau \cl K_0}f\| + \|(e^{-\tau \cl K_0}-e^{-\tau \cl K})
 {\cl A}^{-\alpha}f\|
\end{equation}
for $\tau \ge 0$, $g = \cA^{-\ga}f$ and $f \in L^p(\cI,X)$. From the representation
\begin{align*}
(I - e^{-\tau\cl B})g = \int^\tau_0 e^{-s \cl \cl B} \cl B\, g \,ds, \qquad g \in \dom(\cl B),
\end{align*}
we obtain
\begin{align*}
\|(I - e^{-\tau\cl B}){\cl A}^{-\alpha} g\| \le \tau\,\|\cl B {\cl A}^{-\alpha}\| \,M_{\cl B} \,
e^{\beta_{\cl B} T}\|g\|\qquad g \in \dom(\cl B), \quad \tau \ge 0 \ .
\end{align*}
Then setting $g = e^{-\tau \cl K_0}f$, $f \in L^p(\cI,X)$, and using the estimate $\|e^{-\tau \cl K_0}f\|
\le M_{\cl A}e^{\beta_{\cl A}T}$, $\tau \ge 0$, one gets for the first term in the right-hand side of
(\ref{eq:6.6}) the estimate
\begin{align}\label{eq:6.7}
\|(I - e^{-\tau\cl B}){\cl A}^{-\alpha} e^{-\tau \cl K_0}f\| \le \tau\, C_\alpha \,M_{\cl B} \,
M_{\cl A}\, e^{\gb_\cB T}\|f\|, \quad \tau \ge 0 \ ,
\end{align}
where we also used that $e^{-\gt \cK_0} = 0$ for $\gt \ge T$. To estimate the second term note that
\begin{align*}
 e^{-\tau \cl K}g -e^{-\tau \cl K_0}g =
\int_0^\tau\frac{d}{d\sigma}\{e^{-\sigma \cl K}e^{-(\tau-\sigma) \cl K_0}\}g d\sigma
= -\int_0^\tau e^{-\sigma \cl K}\cl B e^{-(\tau-\sigma ) \cl K_0}g d\sigma \ ,
\end{align*}
$g \in \dom (\cl K_0)$. Let $g \in  {\cl A}^{-\ga}f$, $f \in \cl A^\ga \,\dom(\cl K_0)$. Then
\begin{align*}
 (e^{-\tau \cl K} -e^{-\tau \cl K_0}){\cl A}^{-\ga}f  =
-\int_0^\tau e^{-\sigma \cl K}\cl B e^{-(\tau-\sigma ) \cl K_0}{\cl A}^{-\ga}f d\sigma \ ,
\end{align*}
which leads to the estimate
\begin{align}\label{eq:6.8}
 \|(e^{-\tau \cl K} -e^{-\tau \cl K_0}){\cl A}^{-\ga}f\|  \le
\tau \, C_\alpha M_{\cl K}\,M_{\cl A} \|f\|, \qquad \tau \ge 0 \ ,
\end{align}
$f \in \cl A^\ga\,\dom(\cl K_0)$. Since $\cl A^\ga\,\dom(\cl K_0)$ is dense in $L^p(\cI,X)$ the estimate
extends to $f \in L^p(\cI,X)$.
Taking into account \eqref{eq:6.6}, \eqref{eq:6.7}, and\eqref{eq:6.8} we get the first of the claimed
in lemma inequalities.

(ii) To prove the second inequality we note that
\begin{align}\label{eq:6.9}
 \|&\cl A^{-1}(e^{-\tau \cl B}e^{-\tau \cl K_0}-e^{-\tau \cl K})f\| \leq \|\cl A^{-1}(e^{-\tau\cl B}-I) e^{-\tau \cl K_0}f\|+\|\cl A^{-1}(e^{-\tau \cl K_0}-e^{-\tau \cl K})f\|
\end{align}
$f \in \dom(\cl K_0) = \dom(\cl K)$. Using
\begin{align*}
\cl A^{-1}(I - e^{-\tau\cl B}) e^{-\tau \cl K_0}f = \int^\tau_0 d\sigma \, \cl A^{-1}\cl B
e^{-\sigma \cl B}e^{-\tau \cl K_0}f \ , 
\end{align*}
one finds the estimate for the first term in the right-hand side of (\ref{eq:6.9}):
\begin{align}\label{eq:6.10}
\|\cl A^{-1}(I - e^{-\tau\cl B}) e^{-\tau \cl K_0}f\| \le \tau\, C^*_1 M_{\cl B}\,M_{\cl A}\,
e^{\gb_\cB T}\,\|f\|, \quad \tau \ge 0 \ .
\end{align}
For the second term we start with identity
\begin{align*}
 e^{-\tau \cl K_0}f -e^{-\tau \cl K}f =
\int_0^\tau\frac{d}{d\sigma}\{e^{-\sigma \cl K}e^{-(\tau-\sigma) \cl K}\}fd\sigma
=\int_0^\tau d\sigma \, e^{-\sigma \cl K_0}\cl B e^{-(\tau-\sigma ) \cl K}f \ ,
\end{align*}
which leads to
\begin{align*}
 \cl A^{-1}(e^{-\tau \cl K_0} -e^{-\tau \cl K})f
=\int_0^\tau d\sigma \, \cl A^{-1}\cl Be^{-\sigma \cl K_0}e^{-(\tau-\sigma ) \cl K}f  \ ,
\end{align*}
for any $f \in \dom(\cl K) = \dom(\cl K_0)$. Hence, we get the estimate
\begin{align}\label{eq:6.11}
 \|\cl A^{-1}(e^{-\tau \cl K_0} -e^{-\tau \cl K})f\|
\le \tau \,C^*_1 M_{\cl A}\,M_{\cl K}\|f\|, \quad \tau \ge 0 \ .
\end{align}
Summarising now \eqref{eq:6.9}, \eqref{eq:6.10}, and \eqref{eq:6.11}, 
we obtain the second of the claimed in lemma inequalities.
\end{proof}
%
\begin{lem}\label{TechnicalLemma4}
Let the assumptions (A1), (A3), (A4), (A5), and (A6) be satisfied.
If $\gb \in (\ga,1)$, then there exists a constant $Z(\gb) > 0$ such that
\begin{equation}
\|\cl A^{-1}(T(\tau)-e^{-\tau \cl K})\cl A^{-\gb}\|_{\cl B( L^p(\cI,X))} \leq Z(\gb) \tau^{1+\gb},
\quad \gt \ge 0.
\end{equation}
\end{lem}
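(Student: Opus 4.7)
My plan is to write $T(\tau)-e^{-\tau\cl K}$ as a single-integral commutator representation exposing the Hölder structure of $B(\cdot)$, and then to sandwich it by $\cl A^{-1}$ on the left and $\cl A^{-\beta}$ on the right so that assumption (A6) can be applied. Differentiating $s\mapsto T(\tau-s)\,e^{-s\cl K}$, and using that $\cl K_0$ commutes with $e^{-r\cl K_0}$ (so that the $\cl K_0$-contribution cancels) while $\cl B$ commutes with $e^{-r\cl B}$, one obtains
\begin{equation*}
T(\tau)-e^{-\tau\cl K} \;=\; \int_0^\tau e^{-r\cl B}\,[e^{-r\cl K_0},\cl B]\,e^{-(\tau-r)\cl K}\,dr,
\end{equation*}
and the pointwise computation based on $(e^{-r\cl K_0}h)(t)=e^{-rA}h(t-r)\chi_\cI(t-r)$ gives
\begin{equation*}
\bigl([e^{-r\cl K_0},\cl B]h\bigr)(t)=\bigl\{e^{-rA}(B(t-r)-B(t))+[e^{-rA},B(t)]\bigr\}\,h(t-r)\,\chi_\cI(t-r),
\end{equation*}
which splits the integrand cleanly into a \emph{Hölder piece} (the first summand) and a \emph{frozen-time commutator piece} (the second).

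Next I would handle each piece separately, exploiting that $\cl A^{-1}$ and $\cl A^{-\beta}$ commute with $e^{-rA}$ (since $\cl A$ is time-independent) but not with the multiplication factor $e^{-r\cl B}$. For the frozen-time commutator piece, the identity $\cl A^{-1}[e^{-rA},B(t)]\cl A^{-\beta}=[e^{-rA},\cl A^{-1}B(t)\cl A^{-\beta}]$, combined with the uniform boundedness of $\cl A^{-1}B(t)\cl A^{-\beta}$ (via (A5) and boundedness of $\cl A^{-\beta}$), the range inclusion $\ran(\cl A^{-1}B(t)\cl A^{-\beta})\subset\dom(A^\beta)$, and the interpolation estimate $\|(e^{-rA}-I)y\|\le c(\beta)r^\beta\|A^\beta y\|$ standard for bounded holomorphic semigroups, produces a uniform bound $c_1\,r^\beta$. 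For the Hölder piece, insertion of $\cl A^{-\alpha}\cl A^\alpha$ yields the block $\cl A^{-1}(B(t-r)-B(t))\cl A^{-\alpha}$ bounded directly by $L_\beta\,r^\beta$ via (A6); the surviving $\cl A^\alpha$ must then be paired with $\cl A^{-\beta}$ on the right of the propagator $U(t-r,t-\tau)$, and here the hypothesis $\beta>\alpha$ must be used to absorb the net $\cl A$-power into the bounded operator $\cl A^{-(\beta-\alpha)}$ through the regularity properties of $U$. The factor $\cl A^{-1}e^{-r\cl B}$ from the outer left is handled via the Duhamel-type expansion $\cl A^{-1}e^{-r\cl B}=\cl A^{-1}-\int_0^r\cl A^{-1}\cl B\,e^{-s\cl B}ds$ together with $\|\cl A^{-1}\cl B\|\le C_1^*$ from (A5).

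Assembling the two pieces, the integrand is majorised by $C(\beta)\,r^\beta$ times operator factors uniformly bounded in $(t,r,\tau)$, with Lemma \ref{StabilityLemma} providing stability of the $\cl K_0$-, $\cl B$- and $\cl K$-semigroups, assumptions (A3) and (A5) controlling $\cl B\cl A^{-\alpha}$ and $\cl A^{-1}\cl B$ respectively, and Lemma \ref{TechnicalLemma1} invoked only in tandem with the $\cl A^{-\beta}$ on the right so that no $(\tau-r)^{-\alpha}$ singularity survives. Integration over $r\in[0,\tau]$ then yields $Z(\beta)\tau^{1+\beta}/(1+\beta)$. The principal obstacle is precisely this last point: a naive application of Lemma \ref{TechnicalLemma1} inside the Hölder piece would produce a singular factor $(\tau-r)^{-\alpha}$ degrading the final exponent from $1+\beta$ to $1+\beta-\alpha$, so a refined pointwise-in-$t$ analysis of $\cl A^\alpha U(t-r,t-\tau)\cl A^{-\beta}$ is required to ensure that the $\cl A$-powers combine across $U$ into a bounded operator rather than invoking the singular semigroup bound.
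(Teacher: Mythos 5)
Your starting point coincides with the paper's: both arguments begin from the Duhamel identity
\begin{equation*}
T(\gt)-e^{-\gt\cl K}=\int_0^\gt e^{-\gs\cl B}\bigl[e^{-\gs\cl K_0},\cl B\bigr]\,e^{-(\gt-\gs)\cl K}\,d\gs ,
\end{equation*}
and your pointwise split of the commutator into a H\"older piece $e^{-\gs A}(B(t-\gs)-B(t))$ and a frozen-time piece $[e^{-\gs A},B(t)]$ is an algebraically equivalent rearrangement of the paper's decomposition (the paper writes $e^{-\gs\cl K_0}=e^{-\gs D_0}e^{-\gs\cl A}$ and isolates $[e^{-\gs D_0},\cl B]$, which yields the bare difference $B(t-\gs)-B(t)$, plus a remainder built from $e^{-\gs\cl K_0}-e^{-\gs D_0}$). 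Up to this point the two proofs are essentially the same.

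The gap is exactly where you locate it, and it is not a technicality. Both of your pieces require the right-hand factor $e^{-(\gt-\gs)\cl K}\cl A^{-\gb}$ to absorb a positive power of $\cl A$ produced by the commutator \emph{without} a singularity, i.e.\ a uniform bound of the type $\|\cl A^{\ga}e^{-s\cl K}\cl A^{-\gb}\|\le C$ (and, for the half $A^{-1}B(t)(e^{-\gs A}-I)$ of the frozen piece, an analogous bound with $\gb$ in place of $\ga$). Nothing in the paper supplies this: Lemma \ref{TechnicalLemma1} gives only the singular bound $\|\cl A^{\ga}e^{-s\cl K}\|\le\gL_\ga s^{-\ga}$, and your identity $\cl A^{-1}[e^{-\gs A},B(t)]\cl A^{-\gb}=[e^{-\gs A},\cl A^{-1}B(t)\cl A^{-\gb}]$ tacitly assumes that $\cl A^{-\gb}$ can be slid past $e^{-(\gt-\gs)\cl K}$ to sit next to the commutator. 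Falling back on Lemma \ref{TechnicalLemma1} degrades the H\"older piece to $\gt^{1+\gb-\ga}$, which for small $\gt$ is strictly weaker than the claimed $\gt^{1+\gb}$ and would spoil the rate in Theorem \ref{TheoremEstimate}. The paper circumvents the problem by first splitting $e^{-(\gt-\gs)\cl K}=e^{-(\gt-\gs)\cl K_0}+\bigl(e^{-(\gt-\gs)\cl K}-e^{-(\gt-\gs)\cl K_0}\bigr)$ and $e^{-\gs\cl B}=I+(e^{-\gs\cl B}-I)$: since $\cl K_0$ commutes with $\cl A$, the factor $\cl A^{-\gb}$ passes through $e^{-(\gt-\gs)\cl K_0}$ for free, while each difference term carries an extra factor $\gs$ or $\gt-\gs$ that tames the remaining singular bounds. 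Your missing estimate is plausibly provable by the same singular Gronwall argument as Lemma \ref{TechnicalLemma1} (expand $\cl A^{\ga}e^{-s\cl K}\cl A^{-\gb}$ by Duhamel around $\cl K_0$ and use that $\cl A^{\ga-\gb}e^{-s\cl K_0}$ is bounded), but as written your proof is incomplete at its central step: you must either prove that auxiliary lemma or adopt the paper's splitting.
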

%
\begin{proof}
Let $f\in \dom(\cl K_0)=\dom(\cl K)$. Then identity
\begin{align*}
 \frac{d}{d\sigma }&T(\sigma )e^{-(\tau-\sigma )\cl K}f = \frac{d}{d\sigma }e^{-\sigma  \cl B}
 e^{-\sigma  \cl K_0}e^{-(\tau-\sigma )\cl K}f \\
 =&-e^{-\sigma \cl B}\cl Be^{-\sigma \cl K_0}e^{-(\tau-\sigma )\cl K}f - e^{-\sigma \cl B}
 e^{-\sigma \cl K_0}\cl K_0e^{-(\tau-\sigma )\cl K}f +e^{-\sigma \cl B}e^{-\sigma \cl K_0}Ke^{-(\tau-\sigma )\cl K}f\\
 =&-e^{-\sigma \cl B}\cl Be^{-\sigma \cl K_0}e^{-(\tau-\sigma )\cl K}f + e^{-\sigma \cl B}
 e^{-\sigma \cl K_0}\cl B e^{-(\tau-\sigma )\cl K}f\\
 =& e^{-\sigma \cl B}\{e^{-\sigma \cl K_0}\cl Bf - \cl Be^{-\sigma \cl K_0}\} e^{-(\tau-\sigma )\cl K}f,
\end{align*}
yields
\begin{align}\label{eq:6.13a}
 &T(\tau)f-e^{-\tau \cl K}f = \int_0^\tau\frac{d}{d\sigma }T(\sigma )e^{-(\tau-\sigma )\cl K}fd\sigma=\int_0^\tau e^{-\sigma \cl B}\{e^{-\sigma \cl K_0}\cl B - \cl Be^{-\sigma \cl K_0}\}
 e^{-(\tau-\sigma )\cl K}fd\sigma .
\end{align}

On the other hand we also have the following identity:
\begin{align*}
 e^{-\sigma \cl B}&\left(e^{-\sigma \cl K_0}\cl B - \cl Be^{-\sigma \cl K_0}\right) e^{-(\tau-\sigma )
 \cl K} f\\
 =&\;(e^{-\sigma \cl B}-I)\{e^{-\sigma \cl K_0}\cl B-\cl Be^{-\sigma \cl K_0}\}(e^{-(\tau-\sigma )
 \cl K}-e^{-(\tau-\sigma )\cl K_0})f+\\
 &+(e^{-\sigma \cl B}-I)\{e^{-\sigma \cl K_0}\cl B-\cl Be^{-\sigma \cl K_0}\}e^{-(\tau-\sigma )
 \cl K_0}f+\\
 &+\{e^{-\sigma \cl K_0}\cl B-\cl Be^{-\sigma \cl K_0}\}(e^{-(\tau-\sigma )\cl K}-e^{-(\tau-\sigma )
 \cl K_0})f+\{e^{-\sigma \cl K_0}\cl B-\cl Be^{-\sigma \cl K_0}\}e^{-(\tau-\sigma )\cl K_0}f \ ,
\end{align*}
which yields for $f=\cl A^{-\gb}g$
\begin{equation}\label{eq:6.13}
\begin{split}
 \cl A^{-1}&e^{-\sigma \cl B}\left(e^{-\sigma \cl K_0}\cl B - \cl Be^{-\sigma \cl K_0}\right)
 e^{-(\tau-\sigma )\cl K}\cl A^{-\gb}g=\\
 =&\;\cl A^{-1}(e^{-\sigma \cl B}-I)\{e^{-\sigma \cl K_0}\cl B-\cl Be^{-\sigma \cl K_0}\}
 (e^{-(\tau-\sigma )\cl K}-e^{-(\tau-\sigma )\cl K_0})\cl A^{-\gb}g+\\
 &+\cl A^{-1}(e^{-\sigma \cl B}-I)\{e^{-\sigma \cl K_0}\cl B-\cl Be^{-\sigma \cl K_0}\}\cl A^{-\gb}
 e^{-(\tau-\sigma )\cl K_0}g+\\
 &+\cl A^{-1}\{e^{-\sigma \cl K_0}\cl B-\cl Be^{-\sigma \cl K_0}\}(e^{-(\tau-\sigma )\cl K}-e^{-(\tau-\sigma )
 \cl K_0})\cl A^{-\gb}g+\\
 &+\cl A^{-1}\{(e^{-\sigma \cl K_0}-e^{-\sigma D_0})\cl B-\cl B(e^{-\sigma \cl K_0}-e^{-\sigma D_0})\}
 e^{-(\tau-\sigma )\cl K_0}\cl A^{-\gb}g+\\
 &+\cl A^{-1}(e^{-\sigma D_0}\cl B-\cl Be^{-\sigma  D_0})\cl A^{-\gb}e^{-(\tau-\sigma )\cl K_0}g.
\end{split}
\end{equation}
In the following, we estimate separately the five terms in the right-hand side of 
identity (\ref{eq:6.13}).

To this end we note that $\cl A$ and $\cl K_0$ commute. This implies that
\begin{align*}
 (e^{-(\tau-\sigma )\cl K}-e^{-(\tau-\sigma )\cl K_0})\cl A^{-\gb}g=\int_0^{\tau-\sigma }dr \,
 e^{-(\tau-\sigma -r)\cl K}\cl B\cl A^{-\gb}e^{-r\cl K_0}g \ .
\end{align*}
Thus, for the first term we get
\begin{align*}
 \cl A^{-1}&(e^{-\sigma \cl B}-I)\{e^{-\sigma \cl K_0}\cl B-\cl Be^{-\sigma \cl K_0}\}
 (e^{-(\tau-\sigma )\cl K}-e^{-(\tau-\sigma )\cl K_0})\cl A^{-\gb}g\\
 &=-\int_0^\sigma dr \, \cl A^{-1}\cl Be^{-r\cl B}\,[e^{-\sigma \cl K_0},\cl B]\cl A^{-\gb}\time\sigma
 \int_0^{\tau-\sigma }dr \, \cl A^{\gb}e^{-(\tau-\sigma -r)\cl K}\cl B\cl A^{-\gb}e^{-r\cl K_0}g \ ,
\end{align*}
where
\begin{align*}
[e^{-\sigma \cl K_0},\cl B]f := \{e^{-\sigma \cl K_0}\cl B-\cl Be^{-\sigma \cl K_0}\}f, \quad
f \in \dom(\cl  K_0), \quad \tau \ge 0.
\end{align*}
Then using Lemma \ref{TechnicalLemma1}, we obtain the estimate
\begin{equation}\label{eq:6.14}
\begin{split}
 \|\cl A^{-1}(e^{-\sigma \cl B}&-I)\{e^{-\sigma \cl K_0}\cl B-\cl Be^{-\sigma \cl K_0}\}
 (e^{-(\tau-\sigma )K}-e^{-(\tau-\sigma )\cl K_0})\cl A^{-\gb}g\| \\
 &\leq \sigma\; 2C_1^*C_\gb^2\gL_\gb M_{\cl B}M^2_{\cl A}e^{\gb_\cB T}\int_0^{\tau-\sigma }dr \,
 \frac{1}{(\tau-\sigma -r)^\gb} \; \|g\|\\
 &\leq \sigma(\tau-\sigma )^{1-\gb}\; \frac{2C_1^*C_\gb^2\gL_\gb M_{\cl B}M^2_{\cl A}e^{\gb_\cB T}}
 {1-\gb}\;\|g\|
\end{split}
\end{equation}
for $\gs \in [0,\gt]$ and $\gt \ge 0$. 

For the second term, one can readily establish the estimate
\begin{equation}\label{eq:6.15}
\begin{split}
\|\cl A^{-1}&(e^{-\sigma \cl B}-I)\{e^{-\sigma \cl K_0}\cl B-\cl Be^{-\sigma \cl K_0}\}\cl A^{-\gb}
e^{-(\tau-\sigma )\cl K_0}g\|\leq \sigma \;2C_1^*C_\gb M_\cB M^2_\cA e^{\gb_\cB T}\|g\| \ ,
\end{split}
\end{equation}
for $\gs \in [0,\gt]$ and $\gt \ge 0$. 

Now note that by virtue of relation
\begin{align*}
 e^{-(\tau-\sigma )\cl K}-e^{-(\tau-\sigma )\cl K_0}h=\int_0^{\tau-\sigma }dr \,
 e^{-(\tau-r-\sigma )K}\cl Be^{-r\cl K_0}h \ , \quad h\in \dom(\cK_0) \ ,
\end{align*}
one obtains for the third term the estimate
\begin{equation}\label{eq:6.16}
\begin{split}
 \|\cl A^{-1}&\{e^{-\sigma \cl K_0}\cl B-\cl Be^{-\sigma \cl K_0}\}(e^{-(\tau-\sigma )K}-e^{-(\tau-\sigma )
 \cl K_0})\cl A^{-\gb}g\|\leq (\tau-\sigma )\; 2C_1^*C_\gb M^2_\cA M_\cK \;\|g\| \ ,
\end{split}
\end{equation}
for $\gs \in [0,\gt]$ and $\gt \ge 0$. 

Moreover, using the equality
\begin{align*}
 e^{-\sigma \cl K_0}-e^{-\sigma D_0}h =-\int_0^\sigma dr \, e^{-r\cl K_0}\cl A e^{-(\sigma -r)D_0}h \ ,
\end{align*}
we get for the fourth term:
\begin{align*}
 &\cl A^{-1}\{(e^{-\sigma \cl K_0}-e^{-\sigma D_0})\cl B-\cl B(e^{-\sigma \cl K_0}-e^{-\sigma D_0})\}
 e^{-(\tau-\sigma )\cl K_0}\cl A^{-\gb}g =\\
 &\Bigl(-\int_0^\sigma dr \, e^{-r\cl K_0}e^{-(\sigma -r)D_0} \cl B\cl A^{-\gb}+
 \cl A^{-1}\cl B\int_0^\sigma dr \, e^{-r\cl K_0}\cl A^{1-\gb}e^{-(\sigma -r)D_0}\Bigr)
 e^{-(\tau-\sigma )\cl K_0}g \,
\end{align*}
which yields the estimate
\begin{equation}\label{eq:6.17}
\begin{split}
 &\|\cl A^{-1}\{(e^{-\sigma \cl K_0}-e^{-\sigma D_0})\cl B-\cl B(e^{-\sigma \cl K_0}-e^{-\sigma D_0})\}
 e^{-(\tau-\sigma )\cl K_0}\cl A^{-\gb}g\|\\
 &\leq \sigma\; C_\alpha M_\cA  \|g\| +C_1^*M_\cA \,M^A_{1-\gb}\int_0^\sigma dr \, \frac{1}{r^{1-\gb}}
 \;\|g\| = \left(\gs \;C_\gb M_\cA+ \gs^\gb \;\frac{C_1^*\,M_\cA\,M^A_{1-\gb}}{\gb}\right)\; \|g\|
\end{split}
\end{equation}
for $\gs \in [0,\gt]$ and $\gt \ge 0$. 

To estimate the fifth term, we note that
\begin{align*}
 (e^{-\sigma D_0}\cl B&-\cl Be^{-\sigma D_0})f=e^{-\sigma D_0}B(\cdot)f(\cdot)-\cl B \chi_\cI(\cdot-\sigma )
 f(\cdot-\sigma )=\\
 &=\chi_\cI(\cdot-\sigma )B(\cdot-\sigma )f(\cdot-\sigma )-B(\cdot)\chi_\cI(\cdot-\sigma )f(\cdot-\sigma )=\\
 &=\chi_\cI(\cdot-\sigma )\{B(\cdot-\sigma )-B(\cdot)\}f(\cdot-\sigma ) \ ,
\end{align*}
and therefore, one gets
\begin{equation}\label{eq:6.18}
\begin{split}
 \|\cl A^{-1}(e^{-\sigma D_0}\cl B&-\cl Be^{-\sigma D_0})\cl A^{-\gb}e^{-(\tau-\sigma )\cl K_0}g\|=\|\cl A^{-1}\{e^{-\sigma D_0}\cl B-\cl Be^{-\sigma D_0}\}\cl A^{-\gb}g\|\\
 \leq &\esssup_{t\in\cI}\|A^{-1}\{B(t-\sigma )-B(t)\}A^{-\gb}\|_{\cl B(X)}\;\|g\|\leq L_{\beta}
 \sigma ^{\beta}\|g\|.
\end{split}
\end{equation}
for $\gs \in [0,\gt]$ and $\gt \ge 0$. 

From identity \eqref{eq:6.13} we deduce the estimate
\begin{displaymath}
\begin{split}
 \|\cl A^{-1}&e^{-\sigma \cl B}\left(e^{-\sigma \cl K_0}\cl B - \cl Be^{-\sigma \cl K_0}\right)
 e^{-(\tau-\sigma )\cl K}\cl A^{-\gb}g\|\\
\le &\;\|\cl A^{-1}(e^{-\sigma \cl B}-I)\{e^{-\sigma \cl K_0}\cl B-\cl Be^{-\sigma \cl K_0}\}(e^{-(\tau-\sigma )
\cl K}-e^{-(\tau-\sigma )\cl K_0})\cl A^{-\gb}g\|\\
 &+\|\cl A^{-1}(e^{-\sigma \cl B}-I)\{e^{-\sigma \cl K_0}\cl B-\cl Be^{-\sigma \cl K_0}\}\cl A^{-\gb}
 e^{-(\tau-\sigma )\cl K_0}g\|\\
 &+\|\cl A^{-1}\{e^{-\sigma \cl K_0}\cl B-\cl Be^{-\sigma \cl K_0}\}(e^{-(\tau-\sigma )\cl K}-e^{-(\tau-\sigma )
 \cl K_0})\cl A^{-\gb}g\|\\
 &+\|\cl A^{-1}\{(e^{-\sigma \cl K_0}-e^{-\sigma D_0})\cl B-\cl B(e^{-\sigma \cl K_0}-e^{-\sigma D_0})\}
 e^{-(\tau-\sigma )\cl K_0}\cl A^{-\gb}g\| \\
 &+\|\cl A^{-1}(e^{-\sigma D_0}\cl B-\cl Be^{-\sigma  D_0})\cl A^{-\gb}e^{-(\tau-\sigma )\cl K_0}g\| \ .
\end{split}
\end{displaymath}
for $\gs \in [0,\gt]$ and $\gt \ge 0$. Now taking into account \eqref{eq:6.14}, \eqref{eq:6.15}, 
\eqref{eq:6.16}, \eqref{eq:6.17}, and \eqref{eq:6.18} we find the estimate
\begin{align*}
 \|\cl A^{-1}&e^{-\sigma \cl B}\left(e^{-\sigma \cl K_0}\cl B - \cl Be^{-\sigma \cl K_0}\right)
 e^{-(\tau-\sigma )\cl K}\cl A^{-\alpha}g\|\leq\\
 \leq \Bigl\{&\sigma (\tau-\sigma )^{1-\alpha}\frac{2C_1^*C_\gb^2\gL_\gb M_{\cl B}
 M^2_{\cl A}e^{\gb_\cB T}}{1-\gb} +  \sigma\; 2C_1^*C_\gb M_\cB M^2_\cA e^{\gb_\cB T}+\\
 &(\tau-\sigma )\;2C_1^*C_\gb M^2_\cA M_\cK + \sigma \;C_\gb M_\cA + \sigma ^\gb \;
 \frac{C_1^*\,M_\cA\,M^A_{1-\gb}}{\gb}+ \sigma^\gb \;L_\beta\Big\}\|g\|
\end{align*}
for $\gs \in [0,\gt]$ and $\gt \ge 0$. Then setting
\begin{align*}
Z_1 &:= \frac{2C_1^*C_\gb^2\gL_\gb M_{\cl B}M^2_{\cl A}e^{\gb_\cB T}}{1-\gb}\\
Z_2 &:= 2C_1^*C_\gb M_\cB M^2_\cA e^{\gb_\cB T} + C_\gb M_\cA\\
Z_3 &:= 2C_1^*C_\gb M^2_\cA M_\cK \\
Z_4 &:=  \frac{C_1^*\,M_\cA\,M^A_{1-\gb}}{\gb} + L_\gb
\end{align*}
we obtain
\begin{align}\label{eq:6.20}
 \|\cl A^{-1}&e^{-\sigma \cl B}\left(e^{-\sigma \cl K_0}\cl B - \cl Be^{-\sigma \cl K_0}\right)
 e^{-(\tau-\sigma )\cl K}\cl A^{-\gb}g\|
 \leq \Bigl\{Z_1\;\sigma (\tau-\sigma )^{1-\gb} +  Z_2\;\sigma+ Z_3\; (\tau-\sigma )+ Z_4 \;
 \sigma ^\gb\Big\}\|g\|.
\end{align}

Now we remark that \eqref{eq:6.13a} gives the representation
\begin{align*}
 \cA^{-1}&(T(\tau)-e^{-\tau \cl K})\cA^{-\gb}g =\int_0^\tau d\sigma \, \cA^{-1}e^{-\sigma \cl B}\{e^{-\sigma \cl K_0}\cl B - \cl Be^{-\sigma \cl K_0}\}
 e^{-(\tau-\sigma )\cl K}\cA^{-\gb}g   \ ,
\end{align*}
which yields the estimate
\begin{align*}
 \|\cA^{-1}&(T(\tau)-e^{-\tau \cl K})\cA^{-\gb}g\|\leq\int_0^\tau d\sigma \, \|\cA^{-1}e^{-\sigma \cl B}\{e^{-\sigma \cl K_0}\cl B - 
 \cl Be^{-\sigma \cl K_0}\} e^{-(\tau-\sigma )\cl K}\cA^{-\gb}g\| \ .
\end{align*}
The inserting \eqref{eq:6.20} into this estimate and using
\begin{align*}
 \int_0^\tau \sigma \,(\tau-\sigma )^{1-\gb}d\sigma  =\tau^{3-\gb}\int_0^1 \,
 x(1-x)^{1-\gb}dx=\tau^{3-\gb} B(2, 2-\beta),
\end{align*}
(where $B$ is the \textit{Beta-function}), we find for $\gt \ge 0$ the estimate
\begin{displaymath}
 \|\cA^{-1}(T(\tau)-e^{-\tau \cl K})\cA^{-\gb}g\|  \le Z_1 B(2, 2-\beta) \;\tau^{3-\gb} +
 \frac{Z_2 + Z_3}{2}\; \gt^2 + \frac{Z_4}{1+\gb}\;\gt^{1+\gb} \ ,
\end{displaymath}
and consequently 
\begin{displaymath}
 \|\cA^{-1}(T(\tau)-e^{-\tau \cl K})\cA^{-\gb}g\| \le \left(Z_1 B(2, 2-\beta)\gt^{2-2\gb} +
 \frac{Z_2 + Z_3}{2}\gt^{1-\gb}+ Z_4\right)\;\gt^{1+\gb} \ .
\end{displaymath}
Since $T(\gt) = 0$ and $e^{-\gt \cK} = 0$ for $\gt \ge T$ we finally obtain
\begin{displaymath}
 \|\cA^{-1}(T(\tau)-e^{-\tau \cl K})\cA^{-\gb}g\| \le \left(Z_1B(2, 2-\beta)T^{2-2\gb} + \frac{Z_2 + Z_3}{2}
 T^{1-\gb} + Z_4\right)\;\gt^{1+\gb} \ ,
\end{displaymath}
which proves the lemma.
\end{proof}

\subsection{The Trotter product formula in operator-norm topology} \label{sec:7.2}

 \begin{thm}\label{TheoremEstimate}
 Let the assumptions  (A1), (A3), (A4), (A5), and (A6) be satisfied. If the family of generators
$\{B(t)\}_{t\in\cI}$ is $A$-stable and $\gb \in (\ga,1)$,  then there exists a constant $C_{\alpha, \beta}>0$
such that
 \begin{equation}\label{6.22aa}
  \| (e^{-\gt \cl B/n}e^{-\gt \cK_0/n})^{n}-e^{-\tau\cK}\|_{\cl B( L^p(\cI,X))}\leq  
  \frac{C_{\alpha,\beta}}{n^{\gb-\ga}} \ ,
\end{equation}
for $\gt \ge 0$ and $n = 2,3,\ldots \ $ .
\end{thm}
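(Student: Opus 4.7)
The plan is to use the standard telescoping identity
\begin{equation*}
T(h)^n - e^{-\tau\cK} = \sum_{k=0}^{n-1} T(h)^k \bigl(T(h) - e^{-h\cK}\bigr) e^{-(n-k-1)h\cK}, \qquad T(h):=e^{-h\cB}e^{-h\cK_0},\ h:=\tau/n,
\end{equation*}
and to bound each summand in operator norm by inserting the appropriate powers of $\cA$ between the three building blocks and invoking the technical estimates of Lemmata \ref{TechnicalLemma1}--\ref{TechnicalLemma4}. Since both $T(h)^n$ and $e^{-\tau\cK}$ vanish for $\tau\ge T$ (the latter by construction of the evolution semigroup, the former because of the effective right-shift hidden in $e^{-h\cK_0}$), it suffices to establish the estimate uniformly in $\tau\in(0,T]$.

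For a generic interior index $1\le k\le n-2$ I would factorise
\begin{equation*}
T(h)^k \bigl(T(h) - e^{-h\cK}\bigr) e^{-(n-k-1)h\cK}
= \bigl[T(h)^k\cA\bigr]\bigl[\cA^{-1}(T(h)-e^{-h\cK})\cA^{-\beta}\bigr]\bigl[\cA^\beta e^{-(n-k-1)h\cK}\bigr]
\end{equation*}
and apply Lemma \ref{TechnicalLemma2}, Lemma \ref{TechnicalLemma4}, and Lemma \ref{TechnicalLemma1} (with exponent $\beta$) to the three bracketed factors, arriving at the bound
\begin{equation*}
\Bigl(\tfrac{c_1}{h^\alpha}+\tfrac{c_2}{kh}\Bigr)Z(\beta)h^{1+\beta}\tfrac{\Lambda_\beta}{((n-k-1)h)^\beta}
= c'\!\left(\tfrac{h^{1-\alpha}}{(n-k-1)^\beta}+\tfrac{1}{k(n-k-1)^\beta}\right).
\end{equation*}
The design principle is that inserting $\cA^{-\beta}$ into Lemma \ref{TechnicalLemma4} sharpens the one-step error from $O(1)$ to $O(h^{1+\beta})$, and the $h^\beta$ thus gained exactly offsets the $h^{-\alpha}$ loss incurred in commuting $\cA$ past $T(h)^k$. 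Summing the first piece over $k$ yields $h^{1-\alpha}\sum_{j=1}^{n-2}j^{-\beta}\lesssim h^{1-\alpha}n^{1-\beta}=\tau^{1-\alpha}n^{\alpha-\beta}$, which delivers the target rate $n^{-(\beta-\alpha)}$; the second piece, after splitting the sum at $k\sim n/2$, is $O(\ln(n)/n^\beta)$ and hence of strictly smaller order (because $\alpha>0$).

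The two boundary indices $k=0$ and $k=n-1$ require separate treatment, since Lemma \ref{TechnicalLemma1} is unusable at $\tau=0$ and Lemma \ref{TechnicalLemma2} at $k=0$. For $k=0$ I would write the summand as $[(T(h)-e^{-h\cK})\cA^{-\alpha}][\cA^\alpha e^{-(n-1)h\cK}]$ and combine the first inequality of Lemma \ref{TechnicalLemma3} with Lemma \ref{TechnicalLemma1}, producing $h^{1-\alpha}/(n-1)^\alpha\sim 1/n$. For $k=n-1$ I would write it as $[T(h)^{n-1}\cA][\cA^{-1}(T(h)-e^{-h\cK})]$ and combine Lemma \ref{TechnicalLemma2} with the second inequality of Lemma \ref{TechnicalLemma3}, producing $h^{1-\alpha}+1/(n-1)$. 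Both boundary contributions are dominated by $n^{-(\beta-\alpha)}$ because the hypothesis $\beta<1$ forces $\beta-\alpha<1-\alpha<1$.

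The main substantive difficulty has already been absorbed into Lemma \ref{TechnicalLemma4}, whose proof exploits the time-Hölder hypothesis (A6) through the constant $L_\beta$; the remaining work is essentially book-keeping on the telescoping sum. The only mild subtlety is verifying that the discrete sum $\sum_{k=1}^{n-2}k^{-1}(n-k-1)^{-\beta}$ is of order $\ln(n)/n^\beta$, which is done by splitting the range at $k\sim n/2$, together with the observation that the prefactor $\tau^{1-\alpha}\le T^{1-\alpha}$ may be absorbed into the final constant $C_{\alpha,\beta}$ to obtain the claimed $\tau$-uniform estimate.
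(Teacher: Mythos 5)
Your proposal is correct and follows essentially the same route as the paper's proof: the identical telescoping identity (merely re-indexed via $k=n-m-1$), the same insertions of $\cA$, $\cA^{-1}(\cdot)\cA^{-\beta}$ and $\cA^{\beta}$ into the interior summands resolved by Lemmata \ref{TechnicalLemma2}, \ref{TechnicalLemma4} and \ref{TechnicalLemma1}, the same separate handling of the two boundary indices via Lemma \ref{TechnicalLemma3}, and the same summation bounds (which the paper isolates as Lemma \ref{lem:8.2} in the Appendix). Your accounting of which term produces the dominant $n^{-(\beta-\alpha)}$ rate and why the remaining contributions are of lower order matches the paper exactly.
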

%
\begin{proof}
Let $T(\gs) := e^{-\gs \cB}e^{-\gs \cK_0}$ and $U(\gs) :=e ^{-\gs \cl K}$, $\gs \ge 0$. Then the following
identity holds
\begin{align*}
T(\sigma)^n&-U(\sigma)^n =\sum_{m=0}^{n-1} T(\sigma)^{n-m-1}(T(\sigma)-U(\sigma))U(\sigma)^m\\
 = &\;T(\sigma)^{n-1}(T(\sigma)-U(\sigma))+(T(\sigma)-U(\sigma))U(\sigma)^{n-1}+\sum_{m=1}^{n-2} T(\sigma)^{n-m-1}(T(\sigma)-U(\sigma)) U(\sigma)^m\\
 = &\;T(\sigma)^{n-1}\cl A\cl A^{-1}(T(\sigma)-U(\sigma))+(T(\sigma)-U(\sigma))\cl A^{-\alpha}
 \cl A^{\alpha}U(\sigma)^{n-1}+\\
 &+\sum_{m=1}^{n-2} T(\sigma)^{n-m-1}\cl A \cl A^{-1}(T(\sigma)-U(\sigma))\cl A^{-\gb} \cl A^{\gb} 
 U(\sigma)^m \ .
\end{align*}
It easily yields the inequality
\begin{align*}
 \|T(\sigma)^n-U(\sigma)^n\|
 \le& \;\|\overline{T(\sigma)^{n-1}\cl A}\|\; \|A^{-1}(T(\sigma)-U(\sigma))\| + \|(T(\sigma)-U(\sigma))
 \cl A^{-\ga}\| \;\|\cl A^{\ga}U(\sigma)^{n-1}\|+\\
&\;+\sum_{m=1}^{n-2} \|\overline{T(\sigma)^{n-m-1}\cl A}\|\; \|\cl A^{-1}(T(\sigma)-U(\sigma))\cl A^{-\gb}\|\;
\|\cl A^{\gb} U(\sigma)^m\| \ .
\end{align*}
From Lemma \ref{TechnicalLemma2} we get for $\gs \in (0,\gt]$ and $n \ge 2$ the estimates
\begin{equation}\label{eq:6.21}
\|\overline{T(\gs)^{n-1}\cl A}\| \le \frac{c_1}{\gs^\ga} +\frac{c_2}{(n-1)\gs} \ .
\end{equation}
Now, from Lemma \ref{TechnicalLemma3} we find that
\begin{displaymath}
\|\cA^{-1}(T(\sigma)-U(\sigma))\| \le c \, \gs \quad \mbox{and} \quad
\|(T(\sigma)-U(\sigma))\cl A^{-\alpha}\| \le c \, \gs \ .
\end{displaymath}
This implies that
\begin{displaymath}
\|\overline{T(\gs)^{n-1}\cl A}\|\;\|\cA^{-1}(T(\sigma)-U(\sigma))\| \le c_1 c\,\gs^{1-\ga} + 
\frac{c_2\,c}{n-1} \ , 
\end{displaymath}
and
\begin{displaymath}
\|(T(\sigma)-U(\sigma))\cl A^{-\alpha}\| \;\|\cl A^{\alpha}U(\sigma)^{n-1}\| \le 
\frac{c \,\gL_\ga}{(n-1)^\ga}\; \gs^{1-\ga} \ ,
\end{displaymath}
where we used also Lemma \ref{TechnicalLemma1}, \eqref{eq:6.2}. Since by Lemma \ref{TechnicalLemma4}
 \begin{align*}
  \|\cl A^{-1}(T(\gs)-e^{-\gs \cl K})\cl A^{-\gb}\|_{\cl B( L^p(\cI,X))}\leq Z(\gb)\;\gs^{1+\gb} \ ,
  \quad \gt \in [0,\gt_0)  \ ,
 \end{align*}
one gets 
\begin{displaymath}
\begin{split}
\|\overline{T(\sigma)^{n-m-1}\cl A}\|&\; \|\cl A^{-1}(T(\sigma)-U(\sigma))\cl A^{-\gb}\|\;
\|\cl A^\gb U(\sigma)^m\|\\
&\le c_1\,Z(\gb)\,\gL_\gb\frac{\gs^{1-\ga}}{m^\gb} + c_2\,Z(\gb)\,\gL_\gb\frac{1}{(n-1 -m)\,m^\gb} \ .
\end{split}
\end{displaymath}
From Lemma \ref{lem:8.2} of Appendix (\S \ref{sec:9}) we obtain inequalities
\begin{displaymath}
\begin{split}
\sum^{n-2}_{m=1} &\|\overline{T(\sigma)^{n-m-1}\cl A}\|\; \|\cl A^{-1}(T(\sigma)-U(\sigma))\cl A^{-\gb}\|\;
\|\cl A^{\gb} U(\sigma)^m\| \\
& \le c_1\,Z(\gb)\,\gL_\gb\,\gs^{1-\ga}\sum^{n-2}_{m=1} \frac{1}{m^\gb} + c_2\,Z(\gb)\,\gL_\gb
\sum^{n-2}_{m=1}\frac{1}{(n-1 -m)\,m^\gb}\\
& \le \frac{c_1\,Z(\gb)\,\gL_\gb}{1-\gb} (n-1)^{1-\gb}\gs^{1-\ga} + \frac{2c_2\,Z(\gb)\,\gL_\gb}{1-\gb}
\frac{1}{(n-1)^\gb} + c_2\,Z(\gb)\,\gL_\gb\frac{\ln(n-1)}{(n-1)^{\gb}}.
\end{split}
\end{displaymath}

Summarising all these ingredients one gets the estimate
\begin{displaymath}
\begin{split}
\|T&(\gs)^n - U(\gs)^n\| \\
\le & \;c_1 c\,\gs^{1-\ga} + \frac{c_2\,c}{n-1} + \frac{c \,\gL_\ga}{(n-1)^\ga}\;\gs^{1-\ga}\\
& + \frac{c_1\,Z(\gb)\,\gL_\gb}{1-\gb} (n-1)^{1-\gb}\gs^{1-\ga} + \frac{2c_2\,Z(\gb)\,\gL_\gb}{1-\gb}
\frac{1}{(n-1)^\gb} + c_2\,Z(\gb)\,\gL_\gb\frac{\ln(n-1)}{(n-1)^{\gb}} \ .
\end{split}
\end{displaymath}
If we set $\gs := \tau/n$, then
\begin{displaymath}
\begin{split}
\|T&(\tau/n)^n - U(\tau/n)^n\| \\
\le & \;\frac{c_1 c\;T^{1-\ga}}{(n-1)^{1-\ga}} + \frac{c_2\,c}{n-1} + \frac{c \,\gL_\ga\,T^{1-\ga}}{(n-1)}\\
& + \frac{c_1\,Z(\gb)\,\gL_\gb\,T^{1-\ga}}{1-\gb}\frac{1}{(n-1)^{\gb-\ga}} + \frac{2c_2\,Z(\gb)\,
\gL_\gb}{1-\gb}\frac{1}{(n-1)^\gb} + c_2\,Z(\gb)\,\gL_\gb\frac{\ln(n-1)}{(n-1)^{\gb}}.
\end{split}
\end{displaymath}
for $\gt \ge 0$ and $n = 2,3,\ldots$. Hence, there exists a constant $C_{\ga,\gb} > 0$ such 
that \eqref{6.22aa} holds.
\end{proof}
\begin{rem}\label{rem:7.9}
\item[\;\; (i)] If in condition (A6) we put $\gb=1$, then for each $\gga \in (\ga,1)$ there exists a constant
$C_{\ga,\gga} > 0$ such that
 \begin{equation}\label{eq:6.24a}
 \| (e^{-\gt B/n}e^{-\gt \cK_0/n})^{n}-e^{-\gt \cK}\|_{\cl B( L^p(\cI,X))} \leq C_{\ga,\gga} \, 
 \frac{1}{n^{\gga-\ga}} \ ,
\end{equation}
for $\gt \ge 0$ and $n = 2,3,\ldots$.

\item[\;\;(ii)] It is worth to note that our result depends only on domains of the operators $A$ and 
$B(t)$.

\item[\;\;(iii)] Until now, error estimates in operator-norm for the Trotter product formula on 
\textit{Banach spaces} for the time-independent case is proven only under the assumption that at least one of 
the involved operators is generator of a holomorphic contraction semigroup, see \cite{CachZag2001}. 
Therefore, although motivated by \cite{CachZag2001}, the Theorem \ref{TheoremEstimate} is the first result, where 
this assumption is dropped.

\item[\;\;(iv)] If the family of generators is independent of $t \in \cI$, i.e. $B(t) = B$, then 
condition (A6) is automatically satisfied for any $\gb \ge 0$. In particular, we can set $\gb = 1$. Since 
$\cA$ and $\cB$ commute with  $D_0$  we get
\begin{equation}\label{eq:6.24}
(e^{-\gt \cB/n}e^{-\gt \cK_0/n})^n = (e^{-\gt \cB/n}e^{-\gt \cA/n})^n e^{-\gt D_0}, \quad \gt \ge 0, 
\quad n \in \N.
\end{equation}

Now we comment that if one of the operators: $A$ or $B$, is generator of a holomorphic contraction 
semigroup and another one of a contraction semigroup on a Banach space $X$, then from Theorem 3.6 of 
\cite{CachZag2001} we get the existence of constants $b_1 > 0$, $b_2 > 0$  and $\eta > 0$ such that the estimate
\begin{displaymath}
\|(e^{-\gt B/n}e^{-\gt A/n})^n - e^{-\gt C}\|_{\cl B(X)} \le (b_1 + b_2 \gt^{1-\ga})e^{\gt \eta} \ 
\frac{\ln(n)}{n^{1-\ga}} \ , 
\end{displaymath}
holds for $\gt \ge 0$ and $n \in \N$. Applying this result to \eqref{eq:6.24} we immediately obtain the 
existence of a constant $R > 0$ such that
\begin{displaymath}
\|(e^{-\gt \cB/n}e^{-\gt \cK_0/n})^n - e^{-\gt \cK}\|_{\cl B( L^p(\cI,X))} 
\le R \ \frac{\ln(n)}{n^{1-\ga}} \ ,
\end{displaymath}
is valid for $\gt \ge 0$ and $n \in \N$. Note that this estimate is sharper than the estimate in 
\eqref{eq:6.24a}.
\end{rem}

\subsection{Norm convergence for propagators}\label{sec:7.3}

We investigate here the consequences of Theorem \ref{TheoremEstimate} for convergence of the approximants,
$\{U_n(t,s)\}_{(t,s)\in\Delta}$, $n \in \N$,  \eqref{ApproximationPropagatorIntroduction}, to the propagator
$\{U(t,s)\}_{(t,s)\in \gD}$, which solves the non-ACP \eqref{EvolutionProblem}.  

Recall that by (\ref{eq:6.main}) one gets the relation 
\begin{align}\label{eq:7main}
 (\{(e^{-\frac \tau n \cl B}e^{-\frac \tau n \cl K_0})^n-&e^{-\tau (\cl B+\cl K_0)}\}f)(t)
 =\{U_n(t,t-\tau )-U(t,t-\tau )\}\chi_\cI(t-\tau )f(t-\tau )
\end{align}
for $(t,t-\gt) \in \gD$ and $f \in L^p(\cI,X)$, where the Trotter product approximation  
for propagator $U(t,s)$ has the form
\begin{displaymath}
 U_n(t,s):=\stackrel{\longrightarrow}\prod_{j=1}^n e^{-\frac{t-s} n B(s+(n-j+1)\frac{t-s} n)}
 e^{-\frac {t-s} n A},
 \quad (t,s) \in \gD \ .
\end{displaymath}
is increasingly ordered from the left  to the right.
%
\begin{thm}\label{EstimatePropagators}
 Let the assumptions  (A1), (A3), (A4), (A5), and (A6) be satisfied.
If the family of generators $\{B(t)\}_{t\in\cI}$ is $A$-stable and $\gb \in (\ga,1)$,  then there exists 
a constant $C_{\alpha, \beta}>0$ such that
 \begin{equation}\label{eq:6.26}
 \esssup_{(t,s)\in\gD}\|U_n(t,s)-U(t,s)\|_{\cl B(X)} \le \frac{C_{\ga,\gb}}{n^{\gb-\ga}}, 
 \quad n = 2,3,\ldots \ .
\end{equation}
The constant $C_{\ga,\gb}$ coincides with that in the estimate \eqref{6.22aa} of Theorem \ref{TheoremEstimate}.
\end{thm}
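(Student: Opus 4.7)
The plan is to lift the $L^p(\cI,X)$-operator-norm bound of Theorem \ref{TheoremEstimate} to a pointwise $\cB(X)$-bound for the propagators, by rewriting the operator difference as a multiplication operator in the time variable and invoking Proposition \ref{OperatorNormsBoundedOperator}.

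Fix $\tau \in \cI$ and set $M_n(\tau) := (e^{-\tau\cl B/n}e^{-\tau\cl K_0/n})^n - e^{-\tau\cl K}$. Composing on the left with the left-shift $L(\tau)$ from \eqref{L-shift} and using the fundamental identity \eqref{eq:7main}, one computes
\begin{equation*}
 (L(\tau) M_n(\tau) f)(t) = \chi_\cI(t+\tau)\bigl\{U_n(t+\tau,t) - U(t+\tau,t)\bigr\} f(t), \quad t \in \cI,
\end{equation*}
for every $f \in L^p(\cI, X)$. Hence $L(\tau)M_n(\tau)$ is the multiplication operator on $L^p(\cI,X)$ induced by the family $\{F_\tau(t)\}_{t\in\cI}$ with $F_\tau(t) := \chi_\cI(t+\tau)[U_n(t+\tau,t) - U(t+\tau,t)]$. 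This family is measurable in the sense required by Proposition \ref{OperatorNormsBoundedOperator}, thanks to the strong continuity of $U(\cdot,\cdot)$ on $\gD$ (Definition \ref{SolutionDefinition}) together with the strong measurability of the ingredients of $U_n$ guaranteed by (A4).

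By Proposition \ref{OperatorNormsBoundedOperator} we then have $\|L(\tau) M_n(\tau)\|_{\cB(L^p(\cI,X))} = \esssup_{t\in[0,T-\tau]}\|U_n(t+\tau,t) - U(t+\tau,t)\|_{\cB(X)}$. A direct change-of-variable shows that $L(\tau)$ is a contraction on $L^p(\cI,X)$ (cf.\ \eqref{L-shift-id} with a trivial modification), so the left-hand side is dominated by $\|M_n(\tau)\|_{\cB(L^p(\cI,X))}$, which in turn is $\le C_{\ga,\gb}/n^{\gb-\ga}$ by Theorem \ref{TheoremEstimate}. Parametrising $\gD$ by $(t,s) = (s+\tau, s)$ with $\tau = t-s \in \cI$ and $s \in [0, T-\tau]$ yields $\esssup_{(t,s)\in\gD}(\cdot) = \esssup_{\tau\in\cI}\esssup_{s\in[0,T-\tau]}(\cdot)$, so taking the outer essential supremum over $\tau$ produces \eqref{eq:6.26} with exactly the constant $C_{\ga,\gb}$ supplied by Theorem \ref{TheoremEstimate}. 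The only non-routine point is the verification of measurability of $F_\tau(\cdot)$; once that is in hand, everything else reduces to the left-shift trick already employed in the proof of Proposition \ref{StabilityProposition} combined with the multiplication-operator norm identity of Proposition \ref{OperatorNormsBoundedOperator}.
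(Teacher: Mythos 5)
Your proposal is correct and follows essentially the same route as the paper: compose the difference $(e^{-\tau\cl B/n}e^{-\tau\cl K_0/n})^n-e^{-\tau\cl K}$ with the left shift $L(\tau)$, recognise the result as the multiplication operator induced by $\{U_n(t+\tau,t)-U(t+\tau,t)\}\chi_\cI(t+\tau)$, apply Proposition \ref{OperatorNormsBoundedOperator}, and invoke Theorem \ref{TheoremEstimate}. The only cosmetic difference is that you bound $\|L(\tau)M_n(\tau)\|$ by $\|M_n(\tau)\|$ via contractivity, whereas the paper records the analogue of \eqref{L-shift-id} as an equality (which it later uses to remark that \eqref{6.22aa} and \eqref{eq:6.26} are equivalent); for the stated estimate your inequality suffices.
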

%
\begin{proof}
We set
\begin{displaymath}
S_n(t,s) := U_n(t,s) - U(t,s), \quad (t,s) \in \gD, \quad n \in \N \ ,
\end{displaymath}
and
\begin{align*}
 \cl S_n(\tau) := L(\tau)\{(e^{-\frac \tau n \cl B}e^{-\frac \tau n \cl K_0})^n-e^{-\tau
 (\cl B+\cl K_0)}\}: L^p(\cI,X)
 \rightarrow  L^p(\cI,X) \ ,
\end{align*}
for $\gt \ge 0$ and $n =2,3,\ldots \ $. Here $L(\tau)$, $\tau \ge 0$, is the left-shift semigroup 
(\ref{L-shift}). Then by (\ref{eq:7main}) we get 
\begin{align}\label{Sn-Sn}
(\cl S_n(\gt)g)(t) = S_n(t+\gt,t)\chi_\cI(t+\gt)g(t), \quad t \in \cI_0, \quad g \in L^p(\cI,X).
\end{align}
Hence, for any $\tau \in\cI$ and $n \in \N$, the operator $\cl S_n^\tau $ is a multiplication
operator on $ L^p(\cI,X)$ induced by the family $\{S_n(\cdot+\gt,\cdot)\chi_\cI(\cdot+\gt)\}_{\gt\in\cI}$
of bounded operators. Applying first (\ref{L-shift-id}) and then Proposition 
\ref{OperatorNormsBoundedOperator} for (\ref{Sn-Sn}), one gets for $\tau \geq 0$ the equality
\begin{align}\label{main-equal}
 \|&(e^{-\frac \tau n \cl B}e^{-\frac \tau n \cl K_0})^n-e^{-\tau (\cl B+\cl K_0)}\|_{\cl B( L^p(\cI,X))} = \|L(\tau)\{(e^{-\frac \tau n \cl B}e^{-\frac \tau n \cl K_0})^n-e^{-\tau
 (\cl B+\cl K_0)}\}\|_{\cl B( L^p(\cI,X))} \nonumber\\
 &= \|\cl S_n(\tau )\|_{\cl B( L^p(\cI,X))} = \esssup_{t\in\cI_0} \|S_n(t+\gt,t)\chi_\cI(t+\gt)\|_{\cl B(X)} \nonumber \\
 &= \esssup_{t\in\cI_0} \|\{U_n(t+\tau ,t)-U(t+\tau ,t)\}\chi_\cI(t+\tau )\|_{\cl B(X)} = \esssup_{t\in(0,T-\tau ]} \|U_n(t+\tau ,t)-U(t+\tau ,t)\|_{\cl B(X)}.
\end{align}
Now taking into account Theorem \ref{TheoremEstimate} we find
\begin{displaymath}
\esssup_{t\in(0,T-\tau ]} \|U_n(t+\tau,t)-U(t+\tau,t)\|_{\cl B(X)} \le \frac{C_{\ga,\gb}}{n^{\gb-\ga}},
\quad \gt \ge 0,\quad n \in 2,3,\ldots \ ,
\end{displaymath}
which yields \eqref{eq:6.26}.
\end{proof}
\begin{rem}
\item[\;\;(i)] The equality (\ref{main-equal}) shows that estimates  \eqref{6.22aa} and \eqref{eq:6.26} 
are equivalent.

  \item[\;\;(ii)] We note that \textit{a priori} for a fixed $n \in \N$ the operator family 
  $\{U_n(t,s)\}_{(t,s)\in \Delta}$ do not define a propagator since the \textit{co-cycle equation} is, 
  in general, not satisfied. But one can check that
 \begin{align*}
  U_n(t,s)=U_{n-k}\left(t,s+\frac{k}{n}(t-s)\right)U_k\left(s+\frac{k}{n}(t-s), s\right),
 \end{align*}
is satisfied for $0<s\leq t\leq T$, $n\in \N$ and any $k\in\{0,1,\dots, n\}$.
\end{rem}


\section{Example: Diffusion equation perturbed by a time-dependent potential} \label{sec:8}

Here we investigate a non-autonomous problem when diffusion equation is perturbed by a time-dependent 
potential. To this aim consider the Banach space $X=L^q(\Omega)$, where $\Omega\subset\R^d$, $d\geq 2$, 
is a bounded domain with $C^{2}$-boundary  and $q\in (1,\infty)$. Then the equation for the non-ACP 
reads as
\begin{align}
 \dot u(t)=\Delta u(t) -B(t)u(t), ~~u(s)=u_s\in L^q(\Omega), ~~t,s\in \cI_0 \ ,
 \label{EvolProbLaplaceAndTimeDependentPotential}
\end{align}
where $\Delta$ denotes the Laplace operator in $L^q(\Omega)$ with Dirichlet boundary conditions defined by
the mapping
\begin{align*}
 \Delta:\dom(\Delta)=H^2_q(\Omega)\cap \mathring H^1_q(\Omega)\rightarrow L^q(\Omega).
\end{align*}
Then operator $-\Delta$ is generator of a holomorphic contraction semigroup
on $L^q(\Omega)$, \cite[Theorem 7.3.5/6]{Pazy1983}, and $0\in\varrho(A)$. Let $B(t)$
denote a time-dependent scalar-valued multiplication operator given by
\begin{align*}
 (B(t)f)(x)=V(t,x)f(x), ~~\dom(B(t))=\{f\in L^q(\Omega): V(t,x)f(x)\in L^q(\Omega) \} \ ,
\end{align*}
where
\begin{align*}
 V:\cI\times\Omega\rightarrow \C, ~~ V(t,\cdot) \in L^{\varrho}(\Omega) \ .
\end{align*}

For $\alpha \in (0,1)$, the fractional power of $-\Delta$ are defined on the domain 
$\mathring{H}^{2\alpha}_q(\Omega)$ by 
\begin{align*}
(-\Delta)^{\alpha}: \mathring{H}^{2\alpha}_q(\Omega)\rightarrow L^q(\Omega).
\end{align*}
Note, that for $2\alpha<  1/q$, it holds that $\mathring{H}^{2\alpha}_q(\Omega) =  H^{2\alpha}_q(\Omega)$.
The operator $\Delta^*$ is dual to $\Delta$ and it is defined on domain
\begin{equation*}
\dom(\Delta^*)=H^2_{q'}(\Omega)\cap \mathring{H}^1_{q'}(\Omega)\subset L^{q'}(\Omega) \ ,
\end{equation*}
where ${1}/{q}+ {1}/{q'}=1$. Since operators $B(t)$ are scalar-valued, one gets that the dual 
$B(t)^* = \overline{B(t)}: \dom(B(t))\subset L^{q'}(\Omega)\rightarrow L^{q'}(\Omega)$.

\begin{rem}
 Note that the operator $A=-\Delta$ in $L^p(\Omega)$, $p\in(1,\infty)$ with
 Dirichlet boundary conditions verify that \textit{maximal parabolic regularity} condition, see 
 \cite{Arendt2007}.
 In particular this means that
 $\widetilde{\cl K_0} = D_0 +\cl A$ is closed and hence coincides with its closure: 
 $\widetilde{\cl K_0} = \cl K_0$.
\end{rem}

To prove the existence and uniqueness of solution of the non-ACP \eqref{EvolProbLaplaceAndTimeDependentPotential}
and in order to construct the product approximants for this solution, we have to verify the
assumptions (A1) - (A6). In particular, we have to determine the required regularity of 
$V(t,\cdot)\in L^\varrho(\Omega)$ to ensure that (\cite[Corollary 3.7]{CachZag2001})
\begin{equation*}
\dom((-\Delta)^\alpha)\subset\dom(B(t)) \ \ \ {\rm{and}} \ \ \  \dom(\Delta^*)\subset \dom(B(t)^* \ ,
\end{equation*}
or in other words:
\begin{align}\label{OurAimedEmbedding}
 H^{2\alpha}_q(\Omega),H^{2}_{q'}(\Omega)\subset \dom(B(t)) \ .
\end{align}

Using Sobolev embeddings, one obtains general description of the embedding
\begin{align}
 H^s_{\gamma_1} (\Omega) \subset L^{\gamma_2}(\Omega) {\rm ~~for ~}
 \begin{cases}
  \gamma_2\in[\gamma_1, \frac{d \gamma_1/s}{d/s - \gamma_1}], {\rm ~if~} \gamma_1 \in (1, {d}/{s})\\
  \gamma_2\in[\gamma_1, \infty), {\rm ~if~} \gamma_1 \in [{d}/{s}, \infty)
  \end{cases}\label{SobolevEmbeddings} \ .
\end{align}
For our case \eqref{OurAimedEmbedding}, we obtain $
 H^{2\alpha}_q(\Omega)\subset L^r(\Omega) {\rm ~~and~~} H^{2}_{q'}(\Omega)\subset L^\rho(\Omega)$,
for some constants $r,\rho\in(1,\infty]$. Hence, it suffices to ensure
$L^r(\Omega), L^\varrho(\Omega)\subset\dom(B(t))$.
The parameters $r, \rho$ define $\tilde r, \tilde \rho$ via
\begin{align}\label{RelationRTildeRhoTilde} 
 \frac{1}{r}+ \frac{1}{\tilde r} = \frac{1}{q}, ~~\frac{1}{\rho}+
 \frac{1}{\tilde \rho} = \frac{1}{q'} \ , 
\end{align}
and since the operator $B(t)$ is a multiplication operator defined by $V(t,\cdot)$,
the regularity of $V(t,\cdot)$ has to be at least as
\begin{align*}
\varrho = \max\{\tilde r, \tilde \rho\} \ .
\end{align*}
We distinguish these cases collecting them in the Table 1:
\begin{enumerate}
 \item For $d\geq 4$, or for $d=3$ and $\alpha\in(0,\frac{1}{2})$:
 \begin{center}
 \begin{tabular}{l|l|l||l|l|l}
 $q\in$ & $\tilde r\in$ & $\varrho\in$ & $q\in$ & $\tilde r\in$ & $\varrho\in$ \\
\hline
$(1, \frac{d}{d-2\alpha}]$ &$[\frac d {2\alpha},\infty]$ & $(q', \infty]$ &  $(\frac{d}{d-2\alpha},
\frac{d}{d-2}]$&$[\frac d {2\alpha},\infty]$ & $(\frac{d}{2\alpha}, \infty]$ \\
$(\frac{d}{d-2}, \frac{d}{2\alpha})$ &$[\frac d {2\alpha},\infty]$ &$[\frac{d}{2\alpha},
\infty]$ &  $[\frac{d}{2\alpha}, \infty)$ &  $(q,\infty]$&$(q,\infty]$
\end{tabular}
\end{center}

\item For $d=3$ and $\alpha \in [\frac{1}{2}, \frac 3 4)$:
\begin{center}
\begin{tabular}{l|l|l||l|l|l}
$q\in$ & $\tilde r\in$ & $\varrho\in$& $q\in$ & $\tilde r\in$ & $\varrho\in$\\
\hline
$(1, \frac{3}{3-2\alpha}]$&$[\frac{3}{2\alpha}, \infty]$ & $(q', \infty]$ & $(\frac{3}{3-2\alpha}, 2]$ &
$[\frac{3}{2\alpha}, \infty]$ & $(q', \infty]$ \\
$(2, \frac{3}{2\alpha})$ &$[\frac 3 {2\alpha},\infty]$& $[\frac{3}{2\alpha}, \infty]$ &
$[\frac{3}{2\alpha}, \infty)$ &$(q,\infty]$& $(q,\infty]$
\end{tabular}
\end{center}

\item For $d=3$ and $\alpha \in [\frac{3}{4}, 1)$:
\begin{center}
\begin{tabular}{l|l|l||l|l|l}
$q\in$ & $\tilde r \in$ & $\varrho\in$& $q\in$ &$\tilde r \in$& $\varrho\in$\\
\hline
$(1, 2]$ & $[\frac 3 {2\alpha}, \infty]$ & $(q', \infty]$ & $(2, \infty)$ & $(q, \infty]$&  $(q, \infty]$
\end{tabular}
\end{center}

\item For $d=2$ and $\alpha\in(0,\frac 1 2]$:
\begin{center}
\begin{tabular}{l|l|l||l|l|l||l|l|l}
$q\in$ & $\tilde r\in$ & $\varrho\in$& $q\in$ & $\tilde r\in$ & $\varrho\in$ & $q\in$ & $\tilde r\in$ & $\varrho\in$\\
\hline
$(1, 2)$ & $[\frac 1 \alpha, \infty]$ & $[\max\{q', \frac 1 \alpha\}, \infty]$ & $[2,
\frac 1 \alpha)$ & $[\frac 1 \alpha, \infty]$ & $[\frac 1 \alpha, \infty]$ &
$[\frac 1 \alpha, \infty)$ &$(q, \infty]$ & $(q, \infty]$
\end{tabular}
\end{center}

\item For $d=2$ and $\alpha\in(\frac 1 2, 1)$:
\begin{center}
\begin{tabular}{l|l|l||l|l|l||l|l|l}
$q\in$ & $\tilde r\in$ & $\varrho\in$ & $q\in$ & $\tilde r\in$ & $\varrho\in$ & $q\in$ & $\tilde r\in$ & $\varrho\in$\\
\hline
$(1, \frac 1 \alpha)$ & $[\frac 1 \alpha, \infty]$ & $[q', \infty]$ & $[\frac 1 \alpha, 2)$ &
$(q, \infty]$ & $[q', \infty]$  &
$[2, \infty)$ & $(q, \infty]$ & $(q, \infty]$
\end{tabular}
\end{center}
\end{enumerate}
The Existence Theorem \ref{OurEvolutionProblemUniqueSolution} yields the following theorem
%
\begin{thm}\label{ExampleUniqueSolution}
 Let $\Omega\subset \R^d$ be a bounded domain with $C^2$- boundary, let $q\in (1, \infty)$
and let $\alpha\in(0, 1)$. Let $B(t)f=V(t,\cdot)f$ define a scalar valued multiplication operator on
$L^q(\Omega)$ with $V\in L^\infty(\cI, L^{\tilde r}(\Omega))$. Let $\tilde r\in(1,\infty)$ be chosen from
the above tables.
Then, the non-ACP \eqref{EvolProbLaplaceAndTimeDependentPotential} has a unique solution operator (propagator).
\end{thm}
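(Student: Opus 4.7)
The strategy is to verify assumptions (A1), (A2), (A3) of Assumption \ref{ass:1.1} and then invoke Theorem \ref{OurEvolutionProblemUniqueSolution}, which immediately yields the existence and uniqueness of the solution operator in the sense of Definition \ref{SolutionDefinition}.

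\textbf{Verification of (A1).} With $A = -\Delta$ on $\dom(A) = H^2_q(\Omega)\cap \mathring H^1_q(\Omega)$, the fact that $-\Delta$ generates a bounded holomorphic contraction semigroup on $L^q(\Omega)$ with $0\in\varrho(-\Delta)$ is classical (see \cite[Thm.~7.3.5/6]{Pazy1983}), since $\Omega$ has $C^2$-boundary and Dirichlet data are imposed. In particular $A \in \cG(M_A,0)$.

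\textbf{Verification of (A3) (which will imply (A2)).} The essential step is to show that for every $\alpha \in (0,1)$ and every admissible $q,\tilde r$ from the tables the multiplication by $V(t,\cdot)$ is a bounded map $\dom(A^\alpha) \to L^q(\Omega)$ with a bound uniform in $t$. First I would identify $\dom(A^\alpha)$ with $H^{2\alpha}_q(\Omega)$ (respectively $\mathring H^{2\alpha}_q(\Omega)$ when $2\alpha \ge 1/q$), as recorded before the tables. Then, by the Sobolev embedding \eqref{SobolevEmbeddings}, $H^{2\alpha}_q(\Omega) \hookrightarrow L^r(\Omega)$ for some $r$ determined by $d$, $q$ and $\alpha$; the case distinction is exactly what produces the tables. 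Next, by H\"older's inequality with $\tfrac1r + \tfrac1{\tilde r} = \tfrac1q$ as in \eqref{RelationRTildeRhoTilde}, for $u \in \dom(A^\alpha)$ one has
\begin{equation*}
\|V(t,\cdot)u\|_{L^q(\Omega)} \le \|V(t,\cdot)\|_{L^{\tilde r}(\Omega)}\,\|u\|_{L^r(\Omega)} \le C_{\mathrm{Sob}}\,\|V(t,\cdot)\|_{L^{\tilde r}(\Omega)}\,\|A^\alpha u\|_{L^q(\Omega)}.
\end{equation*}
Taking the essential supremum in $t$ and using $V \in L^\infty(\cI, L^{\tilde r}(\Omega))$ one gets
\begin{equation*}
C_\alpha := \esssup_{t\in\cI}\|B(t)A^{-\alpha}\|_{\cB(L^q(\Omega))} \le C_{\mathrm{Sob}}\,\|V\|_{L^\infty(\cI,L^{\tilde r}(\Omega))} < \infty,
\end{equation*}
which is exactly (A3). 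Because $\dom(A) \subset \dom(A^\alpha)$, the inclusion $\dom(A) \subset \dom(B(t))$ in (A2) follows automatically.

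\textbf{Verification of the remaining part of (A2).} The operator $B(t)$ is a scalar multiplication operator, hence closed on its natural maximal domain, and the argument above shows this domain contains $\dom(A)$, which is dense; thus $B(t)$ is densely defined and closed for a.e.\ $t$. For the strong measurability of $t \mapsto B(t)x$ with $x \in \dom(A)$, I would use that $t \mapsto V(t,\cdot) \in L^{\tilde r}(\Omega)$ is (Bochner-)measurable by assumption, and the bilinear map $(V,u) \mapsto V u$ from $L^{\tilde r}(\Omega) \times L^r(\Omega)$ to $L^q(\Omega)$ is continuous; composition with the constant $x \in L^r(\Omega)$ yields measurability in $L^q(\Omega)$.

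\textbf{Conclusion.} Having checked (A1), (A2), (A3), I apply Theorem \ref{OurEvolutionProblemUniqueSolution} to conclude that the non-ACP \eqref{EvolProbLaplaceAndTimeDependentPotential} has a unique solution operator. The only genuinely tedious point is bookkeeping: for each range of $(d,q,\alpha)$ in the tables one must check that the chosen $r$ satisfies the Sobolev embedding \eqref{SobolevEmbeddings} and that $\tilde r \ge \tilde r_{\min}$ is compatible with $V\in L^\infty(\cI,L^{\tilde r})$; this is precisely the content of the five case distinctions preceding the theorem, and I expect this case-by-case verification, rather than any single analytical step, to be the main obstacle.
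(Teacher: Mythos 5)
Your proposal is correct and follows essentially the same route as the paper: the paper's (very terse) proof likewise reduces everything to checking $\dom((-\Delta)^\alpha)\subset\dom(B(t))$ via the Sobolev embedding \eqref{SobolevEmbeddings} and the H\"older relation \eqref{RelationRTildeRhoTilde}, and then invokes Theorem \ref{OurEvolutionProblemUniqueSolution}. You simply spell out the details (the H\"older estimate giving the uniform bound $C_\alpha$, closedness of the multiplication operators, and measurability) that the paper leaves implicit.
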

%
\begin{proof}
 Using relation \eqref{RelationRTildeRhoTilde} and  the Sobolev embeddings \eqref{SobolevEmbeddings},
 it is easy to see
 that $\dom((-\Delta)^\alpha)\subset\dom(B(t))$ holds. Hence, the assumptions of Theorem
 \ref{OurEvolutionProblemUniqueSolution} are satisfied.
\end{proof}
\begin{rem}
In \cite{PruessSchnaubelt2001}, the existence of a solution operator for equation
\eqref{EvolProbLaplaceAndTimeDependentPotential} is shown assuming weaker
regularity in space and time for the potential. We assumed uniform boundedness of
$\|B(t)(-\Delta)^\alpha\|_{\cl B(X)}$,
which is indeed too strong, but important for the further considerations.
\end{rem}

Now, we study the convergence of the Trotter product approximants of the solution operator.
We assume that the real part of potential $V(t,x)$ is positive:
\begin{align*}
 {\rm Re}(V(t,x)) \geq 0, {\rm ~~for~a.e.~} (t,x)\in \cI\times\Omega \ .
\end{align*}
Then, for any $t\in \cI$ the operator $V(t,x)$ is a generator of a contraction semigroup on
$X=L^q(\Omega)$ (\cite[Theorem I.4.11-12]{EngNag2000}). Moreover, assumption (A4) is satisfied. 

Now let
\begin{align*}
 F(t):=(-\Delta)^{-1}B(t)(-\Delta)^{-\alpha} : L^q(\Omega)\rightarrow
 H^{2}_q(\Omega)\cap \mathring H^1_q(\Omega)\subset L^q(\Omega).
\end{align*}
Assuming $V\in L^\infty(\cI,L^\varrho(\Omega))$, where $\varrho$ is chosen from the Table 1, we find that
$\dom((-\Delta)^\alpha)\subset\dom(B(t))$ and $\dom(\Delta^*)\subset\dom(B(t)^*)$.
Hence, the operator $F(t)$ is uniformly bounded. It rests only to prove the H\"older-continuity 
of the operator-valued function $t\mapsto F(t)$.

Let $f\in L^q(\Omega)$ and $g\in L^{q'}(\Omega)$. Define $\tilde f = \Delta^{-\alpha} f \in
\mathring{H}^{2\alpha}_q(\Omega)\subset L^r(\Omega)$
and $\tilde g = (\Delta^{-1})^*g=(\Delta^*)^{-1}g\in H^2_{q'}(\Omega)\cap \mathring H^1_{q'}
(\Omega)\subset L^{\rho}(\Omega)$. Then, we get for $t\in \cI$
\begin{align*}
 \la F(t) f, g\ra &=  \la (-\Delta)^{-1}B(t)(-\Delta)^{-\alpha}f, g\ra =\la (-\Delta)^{-\alpha}f, B(t)^* (-\Delta^*)^{-1}g\ra = \la \tilde f, B(t)^* \tilde g\ra \ .
\end{align*}
The boundedness of $\la \tilde f, B(t)^* \tilde g\ra$
can be ensured by $V(t,\cdot)\in L^\tau(\Omega)$, where $\tau\in(1,\infty)$ is defined via
\begin{align}
 \frac{1}{r}+\frac{1}{\tau}+\frac{1}{\rho}=1\label{RelationTau}.
\end{align}
%
%

The following Table 2 for parameters turns out:
\begin{enumerate}
 \item For $d\geq 4$, or for $d=3$ and $\alpha\in(0,\frac{1}{2})$:
 \begin{center}
 \begin{tabular}{l|l||l|l||l|l}
 $q\in$ & $\tau \in$ & $q\in$ & $\tau \in$ & $q\in$ & $\tau \in$\\
\hline
$(1, \frac{d}{d-2}]$ &$(\frac{\frac d {2\alpha}q}{\frac{d}{2\alpha}(q-1) + q},\infty]$ & $[\frac{d}{d-2},
\frac{d}{2\alpha}]$ &  $[\frac{d}{2\alpha+2},\infty] $ & $(\frac d {2\alpha},\infty]$ & $[\frac{\frac d 2 q'}
{\frac d 2(q'-1) +q'}, \infty]$
\end{tabular}
\end{center}

\item For $d=3$ and $\alpha \in [\frac 1 2, 1)$:
\begin{center}
\begin{tabular}{l|l||l|l||l|l}
$q\in$ & $\tau \in$ & $q\in$ & $\tau \in$ & $q\in$ & $\tau \in$ \\
\hline
$(1, \frac 3 {2\alpha}]$ & $(\frac{\frac 3 {2\alpha}q}{\frac 3 {2\alpha}(q-1) +q },
\infty]$ & $[\frac 3{2\alpha}, 3]$ & $(1, \infty]$ & $(3, \infty]$&  $[\frac{\frac 3 2 q'}
{\frac 3 2(q'-1) +q'}, \infty]$
\end{tabular}
\end{center}

\item For $d=2$ and $\alpha\in(0,1)$:
\begin{center}
\begin{tabular}{l||l}
$q\in$ & $\tau\in$\\
\hline
$(1, \infty)$ & $(1, \infty]$
\end{tabular}
\end{center}
\end{enumerate}

Since $r\geq q$, it holds that $\tau\leq \tilde \rho$ and hence, $\tau \leq \varrho = \max\{\tilde r,
\tilde \rho\}$ . We are now able to estimate the product approximation of the non-ACP solution of 
\eqref{EvolProbLaplaceAndTimeDependentPotential}.
%
\begin{thm}
Let $\Omega\subset \R^d$ be a bounded domain with $C^2$- boundary,
let $q\in (1, \infty)$, $\alpha\in(0, 1)$ and $\beta\in(\alpha,1)$. Choose $\varrho, \tau\in(1,\infty)$
from the above Table 1 and 2. Let $B(t)f=V(t,\cdot)f$ define a scalar-valued multiplication operator 
in $L^q(\Omega)$ with
\begin{align*}
 V\in L^\infty(\cI, L^{\varrho}(\Omega))\cap C^\beta(\cI, L^\tau(\Omega)).
\end{align*}
Moreover, let ${\rm Re}(V(t,x))\geq0$ for $t\in \cI$ and for a.e. $x\in \Omega$.

Then, the solution operator $U(t,s)$ of \eqref{EvolProbLaplaceAndTimeDependentPotential} can be
approximated in the operator-norm topology with the error bound
 \begin{align*}
  {\rm sup}_{(t,s)\in\Delta}\|U_n(t,s)-U(t,s)\|_{\cl B(L^q(\Omega))} = O(n^{-(\beta-\alpha)}) \ ,
 \end{align*}
by the Trotter product approximants:
 \begin{align}\label{ApproximationPropagatorExample}
  U_n(t,s) = \stackrel{\longrightarrow} \prod_{j=1}^n e^{-\frac{t-s}{n}V(\frac{n-j+1}{n}t+
  \frac{j-1}{n}s,\cdot)}e^{\frac{t-s}{n}\Delta} \ .
 \end{align}
\end{thm}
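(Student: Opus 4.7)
The plan is to verify that all six conditions (A1)--(A6) of Assumption \ref{ass:1.1} hold for the operators $A = -\Delta$ and $\{B(t)\}_{t\in\cI}$ defined by $B(t)f = V(t,\cdot)f$, and to show that the family $\{B(t)\}_{t\in\cI}$ is $A$-stable, so that Theorem \ref{EstimatePropagators} applies directly with exponents $\alpha,\beta$. The existence and uniqueness part of the conclusion is already covered by Theorem \ref{ExampleUniqueSolution}; what remains is (A5), (A6), the $A$-stability, and the fact that (A4) is unlocked by $\mathrm{Re}(V)\ge 0$. Once these are in place, the error bound $O(n^{-(\beta-\alpha)})$ and the explicit product \eqref{ApproximationPropagatorExample} are immediate from Theorem \ref{EstimatePropagators}.

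Condition (A1) is classical for the Dirichlet Laplacian on $L^q(\Omega)$, $q\in(1,\infty)$ (see \cite[Thm.~7.3.5/6]{Pazy1983}), with $0\in\varrho(-\Delta)$ by the Poincaré inequality. Condition (A4) follows because for each $t\in\cI$ the scalar multiplication operator $B(t)$ with $\mathrm{Re}(V(t,x))\ge 0$ generates a contraction semigroup on $L^q(\Omega)$ (cf.~\cite[Thm.~I.4.11--12]{EngNag2000}), so $B(t)\in\cG(1,0)$ uniformly in $t$; strong measurability of $t\mapsto(B(t)+\xi)^{-1}x$ is inherited from the measurability of $V$. Assumptions (A2)--(A3) have already been verified in Theorem \ref{ExampleUniqueSolution} using the Sobolev embeddings \eqref{SobolevEmbeddings} together with \eqref{RelationRTildeRhoTilde} and the choice of $\varrho\ge\tilde r$. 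The verification of (A5) is entirely parallel: since $B(t)^*$ is multiplication by $\overline{V(t,\cdot)}$ on $L^{q'}(\Omega)$ and $\dom(\Delta^*)=H^2_{q'}(\Omega)\cap\mathring H^1_{q'}(\Omega)\hookrightarrow L^\rho(\Omega)$ by \eqref{SobolevEmbeddings}, Hölder's inequality with $1/\rho+1/\tilde\rho=1/q'$ yields $\|B(t)^*(-\Delta^*)^{-1}\|_{\cB(L^{q'}(\Omega))}\le\|V(t,\cdot)\|_{L^{\tilde\rho}}\|(-\Delta^*)^{-1}\|_{L^{q'}\to L^\rho}$, which is essentially bounded on $\cI$ by $\varrho\ge\tilde\rho$ and $V\in L^\infty(\cI,L^\varrho(\Omega))$.

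The key step is (A6). Following the duality argument already sketched in the paper for $F(t)=(-\Delta)^{-1}B(t)(-\Delta)^{-\alpha}$, for $f\in L^q(\Omega)$, $g\in L^{q'}(\Omega)$ one sets $\tilde f=(-\Delta)^{-\alpha}f\in L^r(\Omega)$ and $\tilde g=(-\Delta^*)^{-1}g\in L^\rho(\Omega)$ and computes
\begin{align*}
\la (F(t)-F(s))f,g\ra = \int_\Omega (V(t,x)-V(s,x))\,\tilde f(x)\,\overline{\tilde g(x)}\,dx.
\end{align*}
The three-factor Hölder inequality with exponents satisfying \eqref{RelationTau} gives
\begin{align*}
|\la (F(t)-F(s))f,g\ra| \le \|V(t,\cdot)-V(s,\cdot)\|_{L^\tau(\Omega)}\,\|\tilde f\|_{L^r(\Omega)}\,\|\tilde g\|_{L^\rho(\Omega)}.
\end{align*}
Combining this with the boundedness of $(-\Delta)^{-\alpha}\colon L^q\to L^r$ and $(-\Delta^*)^{-1}\colon L^{q'}\to L^\rho$ and the assumption $V\in C^\beta(\cI,L^\tau(\Omega))$, one obtains
\begin{align*}
\|(-\Delta)^{-1}(B(t)-B(s))(-\Delta)^{-\alpha}\|_{\cB(L^q(\Omega))} \le L_\beta |t-s|^\beta,
\end{align*}
with $L_\beta$ the product of the above operator norms and the Hölder seminorm of $V$. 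This is (A6).

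Finally, the $A$-stability \eqref{eq:5.3} is trivial here: both $\{e^{-\tau A}\}_{\tau\ge 0}$ and $\{e^{-\tau B(t)}\}_{\tau\ge 0}$ are contraction semigroups on $L^q(\Omega)$, hence $\|G_j(t,s;n)\|_{\cB(L^q(\Omega))}\le 1$ for all admissible indices and all $(t,s)\in\gD$, so the ordered product in \eqref{eq:5.3} is bounded by $1$ uniformly in $n$. All hypotheses of Theorem \ref{EstimatePropagators} being satisfied, the estimate $\sup_{(t,s)\in\gD}\|U_n(t,s)-U(t,s)\|_{\cB(L^q(\Omega))}=O(n^{-(\beta-\alpha)})$ with approximants \eqref{ApproximationPropagatorExample} follows. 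The main technical hurdle is (A6): once the admissible triples $(r,\rho,\tau)$ identified through \eqref{RelationRTildeRhoTilde}--\eqref{RelationTau} and Tables 1--2 are in place, the estimate reduces to a clean three-factor Hölder bound in the dual pairing.
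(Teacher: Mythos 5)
Your proposal is correct and follows essentially the same route as the paper: verification of (A1)--(A6) via the Sobolev-embedding tables, the duality/three-factor H\"older argument for the H\"older continuity of $F(t)=(-\Delta)^{-1}B(t)(-\Delta)^{-\alpha}$ (which is exactly the paper's computation, written out for the difference $V(t,\cdot)-V(s,\cdot)$), contraction-semigroup stability, and then an application of Theorem \ref{EstimatePropagators}. The only point you leave implicit is that the $\esssup$ in \eqref{eq:6.26} upgrades to a genuine $\sup$ because the approximants and the propagator depend continuously on $(t,s)$, an observation the paper makes in one line.
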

%
\begin{proof}
One can easily verify that the operator $(-\Delta)^{-1}B(t)(-\Delta)^{-\alpha}$ is bounded. The stability
condition is satisfied, since we deal with generators of contraction semigroups. The $\esssup$ becomes indeed
a $\sup$ since we have continuous time-dependence for propagator's approximants. Then the claim follows
by virtue of Theorem \ref{EstimatePropagators}.
\end{proof}
We conclude this section by some number of remarks.
%
\begin{rem}~
 \item[\;\;(i)] We focused on domains, which are compact and have $C^2$-boundaries. Our arguments can be
 extended to a more general domains.

 \item[\;\;(ii)] Although the propagator approximants $\{U_n(t,s)\}_{(t,s)\in\Delta}$ defined in
\eqref{ApproximationPropagatorExample} looks elaborate, they have a simple structure. The semigroup 
in $L^q(\R^d)$ generated by the Laplace operator is given by the Gauss-Weierstrass semigroup 
(see for example \cite[Chapter 2.13]{EngNag2000}) defined via
\begin{align*}
 (e^{t\Delta}u)(x) = (T(t)u)(x) = (4\pi t)^{-d/2}\int_{\R^d} dy \, e^{-\frac{|x-y|^2}{4t}}u(y) \ .
\end{align*}
The factors $e^{-\tau V(t_j)}$, $j = 1,2, \ldots, n$,  in the product approximant 
\eqref{ApproximationPropagatorExample} are scalar valued and can be easily computed.

\item[\;\;(iii)] In \cite{Batkai2011}, see Theorem 5.2 , the authors proved for the same approximation (called
there the sequential splitting procedure) a \textit{vector-dependent} convergence rate on a subspace in 
$L^q(\R^d)$, where the potential $V$ is bounded and its commutator with Laplacian verifies a supplementary
\textit{commutator} condition.
\end{rem}


\section{Appendix}\label{sec:9}

The next Gronwall-type lemma is useful. It can be proved by iterating the Volterra-integral equation  
\cite[Theorem 2.25]{Hunter2001}.
%
\begin{lem}\label{GronwallLemma}
 Let $F$ be a real function satisfying
 \begin{align*}
 0\leq F(t) \leq c_1 t^{-\alpha}+c_2\int_0^t F(s) (t-s)^{-\alpha} ds, ~~t>0 \ ,
\end{align*}
for some positive constants $c_1, c_2>0$ and $\alpha\in(0,1)$.
Then there is a constant $C=2c_1$ and a time value $t_0=\sigma_\alpha\cdot\left\{{1}/{c_2} ,
\left({1}/{c_2}\right)^{1/(1-\alpha)}\right\}$ (where $\sigma_\alpha$ depends only on $\alpha\in(0,1)$)
such that $F(t)t^{\alpha}\leq C$  for small $t\in(0,t_0)$.
\end{lem}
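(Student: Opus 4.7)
The plan is to iterate the given Volterra-type inequality in the Picard manner. Writing the bound as $F(t) \le g(t) + (KF)(t)$ with $g(t) := c_1 t^{-\alpha}$ and $(Kh)(t) := c_2 \int_0^t h(s)(t-s)^{-\alpha}\,ds$, repeated substitution yields, for every $N \ge 1$,
\begin{equation*}
F(t) \le \sum_{n=0}^{N-1}(K^n g)(t) + (K^N F)(t).
\end{equation*}
The strategy is to compute $(K^n g)(t)$ explicitly, sum the resulting series, show that the remainder vanishes on some interval $(0,t_0)$, and verify that on this interval the series is dominated by $2c_1 t^{-\alpha}$.

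The key computation is the Beta-function identity $\int_0^t s^{a-1}(t-s)^{b-1}\,ds = t^{a+b-1}\,\Gamma(a)\Gamma(b)/\Gamma(a+b)$. A straightforward induction then gives
\begin{equation*}
(K^n g)(t) = c_1\,c_2^n\,\frac{\Gamma(1-\alpha)^{n+1}}{\Gamma((n+1)(1-\alpha))}\,t^{(n+1)(1-\alpha)-1},
\end{equation*}
so, multiplying by $t^{\alpha}$, one obtains the explicit Mittag-Leffler-type majorant
\begin{equation*}
t^{\alpha}\sum_{n=0}^{\infty}(K^n g)(t) = c_1 \sum_{n=0}^{\infty}\frac{\Gamma(1-\alpha)^{n+1}}{\Gamma((n+1)(1-\alpha))}\bigl(c_2\,t^{1-\alpha}\bigr)^n,
\end{equation*}
which has infinite radius of convergence in $c_2 t^{1-\alpha}$ because $\Gamma((n+1)(1-\alpha))$ grows super-exponentially in $n$.

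Since the $n=0$ term already equals exactly $c_1$, there exists a constant $\sigma_{\alpha}>0$ depending only on $\alpha$ such that $c_2\,t^{1-\alpha}\le \sigma_{\alpha}$ forces the sum of the terms with $n\ge 1$ to be at most $c_1$. Translated into a condition on $t$ this reads $t\le (\sigma_{\alpha}/c_2)^{1/(1-\alpha)}$; in the regime $c_2\le 1$ the weaker condition $t\le \sigma_{\alpha}/c_2$ suffices, and taking the minimum of the two candidates produces the precise form of $t_0$ stated in the lemma. On the resulting interval one reads off $t^{\alpha}F(t)\le 2c_1$, which is the claim with $C=2c_1$.

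The main obstacle will be justifying that the remainder $(K^N F)(t)$ tends to $0$ as $N\to\infty$, since an a priori pointwise bound on $F$ is not given. I would handle this by first using one iteration of the original inequality to pick up a crude local bound of the form $F(t)\le C'(1+t^{-\alpha})$ on a small initial interval; plugging this bound into $(K^N F)(t)$ and invoking the same Beta-function computation produces an estimate proportional to $c_2^N\Gamma(1-\alpha)^N/\Gamma(N(1-\alpha))$ times a slowly varying power of $t$, which vanishes as $N\to\infty$ by the same super-exponential growth of $\Gamma$ that secured convergence of the series. This closes the argument and shows that the pointwise inequality $F(t)\le \sum_{n\ge 0}(K^n g)(t)$ actually holds on $(0,t_0)$.
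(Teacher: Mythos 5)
Your proposal is correct and follows exactly the route the paper indicates: the paper's ``proof'' consists of the single remark that the lemma ``can be proved by iterating the Volterra-integral equation'' (citing Hunter--Nachtergaele), which is precisely the Picard iteration with Beta-function evaluation of the iterated kernels that you carry out in detail, including the correct formula $(K^n g)(t)=c_1 c_2^n\,\Gamma(1-\alpha)^{n+1}\Gamma((n+1)(1-\alpha))^{-1}t^{(n+1)(1-\alpha)-1}$ and the Mittag-Leffler majorant. The only soft spot is your treatment of the remainder: a pointwise bound $F(t)\le C'(1+t^{-\alpha})$ does not follow from one iteration of the inequality, but it is also not needed, since $F\ge 0$ together with finiteness of $\int_0^t F(s)(t-s)^{-\alpha}\,ds$ already gives $F\in L^1_{\mathrm{loc}}$, and the $N$-fold iterated kernel $c_2^N\Gamma(1-\alpha)^N\Gamma(N(1-\alpha))^{-1}(t-s)^{N(1-\alpha)-1}$ is bounded on $[0,t]$ once $N(1-\alpha)\ge 1$ and decays super-exponentially in $N$, so $(K^N F)(t)\to 0$ as you intend.
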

Further we prove the following lemma.
%
\begin{lem}\label{lem:8.2}
Let $\gb \in [0,1)$. Then the estimates
\begin{equation}\label{eq:9.1}
\sum^{n-1}_{m=1} \frac{1}{m^\gb} \le \frac{n^{1-\gb}}{1-\gb} \ ,
\end{equation}
and
\begin{equation}\label{eq:9.2}
\sum^{n-1}_{m=1}\frac{1}{(n-m) m^\gb} \le \frac{2}{1-\gb}\frac{1}{n^\gb} + \frac{\ln(n)}{n^\gb} \ ,
\end{equation}
are valid for $n = 2,3,\ldots$ \ .
\end{lem}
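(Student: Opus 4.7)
The proof splits naturally into two independent estimates, both of which are standard integral/summation comparisons tailored to the decreasing function $x\mapsto x^{-\gb}$.

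For the first inequality \eqref{eq:9.1}, my plan is to exploit the fact that $x\mapsto x^{-\gb}$ is positive and decreasing on $(0,\infty)$ and, since $\gb\in[0,1)$, integrable at $0$. The key comparison is $m^{-\gb}\le\int_{m-1}^{m}x^{-\gb}\,dx$ for every integer $m\ge 1$. Summing from $m=1$ to $m=n-1$ telescopes the right-hand side into $\int_{0}^{n-1}x^{-\gb}\,dx=\frac{(n-1)^{1-\gb}}{1-\gb}$, which is bounded by $\frac{n^{1-\gb}}{1-\gb}$. No subtleties arise here; everything is a direct consequence of monotonicity.

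For the second inequality \eqref{eq:9.2}, I would split the sum at $m^{*}:=\lfloor n/2\rfloor$ and estimate the two halves using different properties of the integrand $\frac{1}{(n-m)m^{\gb}}$. For $1\le m\le m^{*}$ one has $n-m\ge n/2$, so $\frac{1}{n-m}\le\frac{2}{n}$; pulling this prefactor out and applying \eqref{eq:9.1} to $\sum_{m=1}^{m^{*}}m^{-\gb}\le\frac{(n/2)^{1-\gb}}{1-\gb}$ yields a contribution of order $\frac{2^{\gb}}{(1-\gb)n^{\gb}}\le\frac{2}{(1-\gb)n^{\gb}}$. For $m^{*}<m\le n-1$ one uses the complementary bound $m\ge n/2$, hence $m^{-\gb}\le (n/2)^{-\gb}\le 2 n^{-\gb}$; the remaining sum is the harmonic partial sum $\sum_{k=1}^{n-m^{*}-1}\frac{1}{k}\le 1+\ln n$, which (after absorbing the constant into the first term of the target bound) produces the $\frac{\ln n}{n^{\gb}}$ piece. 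Putting the two halves together gives the claimed estimate.

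The main obstacle I anticipate is cosmetic rather than conceptual: chasing the constants so that they line up exactly with $\frac{2}{1-\gb}$ and the coefficient $1$ in front of $\ln n$. The factor $2^{\gb}\le 2$ produced by the cut-off at $n/2$ and the additive $1$ arising from the harmonic sum bound $1+\ln(n-1)$ must both be absorbed. I expect this to require either a slightly asymmetric choice of split point, or simply noting that $2^{\gb}\le 2$ lets the surplus be rolled into the $\frac{2}{1-\gb}\cdot n^{-\gb}$ term (which is ample since $\frac{2}{1-\gb}\ge 2$ for all $\gb\in[0,1)$). Neither issue is substantive; the estimates (i) and (ii) are linked only through the fact that (i) is invoked in bounding the first half of the sum in (ii).
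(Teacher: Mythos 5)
Your proof of \eqref{eq:9.1} is the same integral comparison the paper uses and is correct. For \eqref{eq:9.2} you take a genuinely different route: a dyadic split at $m^{*}=\lfloor n/2\rfloor$, whereas the paper uses the algebraic identity
\begin{displaymath}
\frac{1}{(n-m)\,m^{\gb}}=\frac{1}{n}\left(\frac{1}{m^{\gb}}+\frac{m^{1-\gb}}{n-m}\right),
\end{displaymath}
then bounds $\frac1n\sum_m m^{-\gb}$ by \eqref{eq:9.1} and $\frac1n\sum_m \frac{m^{1-\gb}}{n-m}$ by an integral comparison (the function $x\mapsto x^{1-\gb}/(n-x)$ is increasing; the substitution $x=n(1-s)$ yields exactly $n^{1-\gb}\ln(n)$, and the boundary term $m=n-1$ supplies the second $\frac{1}{1-\gb}n^{-\gb}$). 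Your split does produce the correct order $O(n^{-\gb}\ln n)+O(n^{-\gb})$, which is all that is actually needed where the lemma is invoked (Theorem \ref{TheoremEstimate} absorbs all constants into $C_{\ga,\gb}$).

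However, as written your argument does not prove the inequality with the stated constants, and the repair you sketch does not work. On the upper range $m>m^{*}$ you bound $m^{-\gb}\le (n/2)^{-\gb}=2^{\gb}n^{-\gb}$ and the harmonic tail by $1+\ln(n-m^{*}-1)$, so the coefficient you obtain in front of $\ln(n)/n^{\gb}$ is essentially $2^{\gb}$, not $1$. The surplus $(2^{\gb}-1)\,n^{-\gb}\ln(n)$ is not $O(n^{-\gb})$, so for $\gb>0$ it cannot be ``rolled into'' the term $\frac{2}{1-\gb}n^{-\gb}$ uniformly in $n$; already for $\gb=1/2$ your bound exceeds the stated one for all $n\gtrsim 6$. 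Moving the split point does not help either: keeping the lower-range contribution of size $O(n^{-\gb})$ forces $n-m^{*}\gtrsim n$, and then $(m^{*})^{-\gb}>n^{-\gb}$ and $\ln(n-m^{*})\sim\ln(n)$, so the coefficient of $\ln(n)/n^{\gb}$ produced by any such split is strictly larger than $1$. Either adopt the paper's decomposition, or weaken \eqref{eq:9.2} to carry a factor $2^{\gb}$ (or $2$) in front of $\ln(n)/n^{\gb}$ --- the latter is harmless for the application but changes the statement being proved.
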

%
\begin{proof}
The function $f(x) = x^{-\gb}$, $x > 0$, is decreasing. Hence
\begin{displaymath}
\sum^{n-1}_{m=1} \frac{1}{m^\gb} \le \int^{n-1}_0  dx \, \frac{1}{x^\gb}  \le \frac{(n-1)^{1-\gb}}{1-\gb} \le
\frac{n^{1-\gb}}{1-\gb} \ ,
\end{displaymath}
for $n =2,3,\ldots$\, which proves (\ref{eq:9.1}).

Further, we have
\begin{displaymath}
\sum^{n-1}_{m=1}\frac{1}{(n-m) m^\gb} = \frac{1}{n}\sum^{n-1}_{m=1}\frac{n}{(n-m)m^\gb} =
\frac{1}{n}\sum^{n-1}_{m=1}\frac{1}{m^\gb} + \frac{1}{n}\sum^{n-1}_{m=1}\frac{m^{1-\gb}}{n-m} \ .
\end{displaymath}
Since $\sum^{n-1}_{m=1} {1}/{m^\gb} \le n^{1-\gb}/({1-\gb})$ we get the estimate
\begin{equation}\label{eq:9.3}
\sum^{n-1}_{m=1}\frac{1}{(n-m) m^\gb} \le \frac{1}{1-\gb}\frac{1}{n^\gb} + \frac{1}{n}\sum^{n-1}_{m=1}
\frac{m^{1-\gb}}{n-m} \ .
\end{equation}
Note that the function $f(x) := {x^{1-\gb}}/{(n-x)}$, $x \in [0,n)$, is increasing. Hence for $n\geq 2$,
\begin{displaymath}
\begin{split}
\sum^{n-2}_{m=1}\frac{m^{1-\gb}}{n-m} &\le \int^{n-1}_1 dx \, \frac{x^{1-\gb}}{n-x} 
\le n^{1-\gb}\int^1_{\tfrac{1}{n}} ds \, \frac{(1-s)^{1-\gb}}{s} \le n^{1-\gb}\int^1_{\tfrac{1}{n}} ds \, \frac{1}{s} =  n^{1-\gb}\ln(n).
\end{split}
\end{displaymath}
Consequently we obtain the estimate  
\begin{displaymath}
\frac{1}{n}\sum^{n-1}_{m=1}\frac{m^{1-\gb}}{n-m} = 
\frac{1}{n}\sum^{n-2}_{m=1}\frac{m^{1-\gb}}{n-m} + \frac{1}{n}(n-1)^{1-\gb} \le 
\frac{\ln(n)}{n^{\gb}} + \frac{1}{1-\beta}\frac{1}{n^{\gb}}, \quad n = 2,3,\ldots\, ,
\end{displaymath}
which together with (\ref{eq:9.3}) proves (\ref{eq:9.2}).
\end{proof}







\end{document}